\newcommand{\memph}[1]{{\color{red}\emph{#1}}}
\patchcommand\@starttoc{\begin{quote}}{\end{quote}}
\def\@tocline#1#2#3#4#5#6#7{\relax
  \ifnum #1>\c@tocdepth 
  \else
    \par \addpenalty\@secpenalty\addvspace{#2}%
    \begingroup \hyphenpenalty\@M
    \@ifempty{#4}{%
      \@tempdima\csname r@tocindent\number#1\endcsname\relax
    }{%
      \@tempdima#4\relax
    }%
    \parindent\z@ \leftskip#3\relax \advance\leftskip\@tempdima\relax
    \rightskip\@pnumwidth plus4em \parfillskip-\@pnumwidth
    #5\leavevmode\hskip-\@tempdima
      \ifcase #1
       \or\or \hskip 1em \or \hskip 2em \else \hskip 3em \fi%
      #6\nobreak\relax
    \dotfill\hbox to\@pnumwidth{\@tocpagenum{#7}}\par
    \nobreak
    \endgroup
  \fi}
 \theoremstyle{plain}
 \newtheorem{thm}{Theorem}[section]
 \newtheorem{cor}[thm]{Corollary}
 \newtheorem{lem}[thm]{Lemma}
 \newtheorem{prop}[thm]{Proposition}
\theoremstyle{definition}
 \newtheorem{defn}[thm]{Definition}
\theoremstyle{remark}
 \newtheorem{rem}[thm]{Remark}
 \newtheorem{nota}[thm]{Notation}
 \newtheorem{conv}[thm]{Convention}
 \newtheorem{exam}[thm]{Example}
 \numberwithin{equation}{section}
\theoremstyle{plain}
\DeclareMathOperator{\UU}{\mathcal{U}}
 \DeclareMathOperator{\ran}{ran}
 \DeclareMathOperator{\dom}{dom}
 \DeclareMathOperator{\id}{id}
 \DeclareMathOperator{\supp}{supp}
 \DeclareMathOperator{\dcl}{dcl}
 \DeclareMathOperator{\pr}{pr}
\def\XXint#1#2#3{{\setbox0=\hbox{$#1{#2#3}{\int}$}
\vcenter{\hbox{$#2#3$}}\kern-.5\wd0}}
\newcommand{\Z}{\mathds{Z}}
\newcommand{\Q}{\mathds{Q}}
\newcommand{\N}{\mathds{N}}
\newcommand{\R}{\mathds{R}}
\newcommand{\dO}{\mathds{O}}
\newcommand{\omin}{$o$\nobreakdash}
\newcommand{\T}{$T$\nobreakdash}
\newcommand{\ga}{\mathfrak{a}}
\newcommand{\gp}{\mathfrak{p}}
\newcommand{\gq}{\mathfrak{q}}
\newcommand{\gr}{\mathfrak{r}}
\newcommand{\0}{\emptyset}
\DeclareMathAlphabet{\mathpzc}{OT1}{pzc}{m}{it}
\DeclareMathAlphabet{\mathpzc}{OT1}{pzc}{m}{it}
\newcommand{\dbra}[1]{
  \,[\mkern-6.8mu[\, #1 \,]\mkern-6.8mu]}
\newcommand{\dpar}[1]{
   (\mkern-4mu (#1 )\mkern-4mu )}
\providecommand\given{}
\newcommand\SetSymbol[1][]{%
\nonscript \: #1 \vert
\allowbreak
\nonscript\:
\mathopen{}}
\DeclarePairedDelimiterX\set[1]\{\}{%
\renewcommand\given{\SetSymbol[\delimsize]}
#1
}
 \DeclarePairedDelimiter\abs{\lvert}{\rvert}
 \newcommand{\usub}[2]{#1_{\textup{#2}}}
 \newcommand{\lan}[3]{\mathcal{L}_{#1 \textup{#2} #3}}
\newcommand{\mdl}[1]{\mathcal{#1}}  
\newcommand{\bb}[1]{\mathbb{#1}}
\newcommand{\rest}{\upharpoonright}
\newcommand{\fun}{\longrightarrow}
\newcommand{\efun}{\longmapsto}
\newcommand{\sub}{\subseteq}
\newcommand{\mi}{\smallsetminus}
\newcommand{\la}{\langle}
\newcommand{\ra}{\rangle}
\DeclareMathOperator{\gsk}{\mathbf{K}_+}
\DeclareMathOperator{\ggk}{\mathbf{K}}
\DeclareMathOperator{\ob}{Ob}
\DeclareMathOperator{\fin}{fin}
\DeclareMathOperator{\isp}{I_{sp}}
\DeclareMathOperator{\can}{\mathbf{c}}
\DeclareMathOperator{\TCVF}{TCVF}
\DeclareMathOperator{\TKVF}{TKVF}
\DeclareMathOperator{\vrv}{vrv}
\DeclareMathOperator{\RVH}{RVH}
\DeclareMathOperator{\rv}{rv}
\DeclareMathOperator{\sn}{sn}
\DeclareMathOperator{\csn}{csn}
\DeclareMathOperator{\vv}{val}
\DeclareMathOperator{\VF}{VF}
\DeclareMathOperator{\RV}{RV}
\DeclareMathOperator{\MM}{\mdl {M}}
\DeclareMathOperator{\OO}{\mdl {O}}
\DeclareMathOperator{\res}{res}  
\DeclareMathOperator{\K}{\Bbbk} 
 \DeclareMathOperator{\ac}{ac}
 \DeclareMathOperator{\rac}{\overline {ac}}
 \DeclareMathOperator{\av}{av}
\newcommand{\dand}{\quad \text{and} \quad}
\newcommand{\LT}{$\lan{T}{}{}$\nobreakdash}
\newcommand{\ddx}{\tfrac{d}{d x}}
\newcommand{\sgn}{\textup{sgn}}
\newcommand{\acpi}{{\ac}{\pi}}
\newcommand{\sig}{\textup{sig}}
\newcommand{\RR}{\R \dpar{ t^{\R} }}
\newcommand{\ORR}{\R \dbra{t^{\R}}}
\newcommand{\gtop}{\mathpzc{0\!\!0}}
\DeclareMathOperator{\KD}{K\Delta}
\newcommand{\Def}{\textup{Def}}
\author[Yimu Yin]{Yimu Yin}
\address{Pasadena \\ California}
\email{yimu.yin@hotmail.com}
\title[\T-convexity with tempered exp]{\T-convex valued fields with tempered exponentiation}
\begin{document}

\begin{abstract}
  We continue the effort of grokking the structure of power-bounded \T-convex valued fields, whose theory is in general referred to as $\TCVF$. In the present paper our focus is on certain expansion of it that is equipped with a tempered exponential function beyond the valuation ring. In order to construct such a tempered exponential function, the signed value group $\Gamma$ is also converted into a model of $T$ plus exponentiation and is in fact identified with (a section of) the residue field via the composition of a diagonal cross-section and an angular component map. In a sense, the resulting universal theory $\TKVF$ is a halfway point between power-bounded $\TCVF$ and exponential $\TCVF$. This theory is reasonably well-behaved. In particular, we show that it admits quantifier elimination in a natural language, a notion of dimension, a generalized Euler characteristic, etc.
\end{abstract}


\maketitle

\tableofcontents

\section{Introduction by way of an example}\label{example}

We have developed a theory of Hrushovski-Kazhdan style motivic integration in power-bounded \T-convex valued fields in \cite{Yin:tcon:I, Yin:tcon:1.5}. For the next step, as already laid out in \cite{hrushovski:kazhdan:integration:vf} (see also \cite{yin:hk:part:3}), one may try to construct the Fourier transform over the additive group. For a comparable theory over the multiplicative group, that is, the Mellin transform, deep obstacles remain insofar as algebraically closed valued fields are concerned. On the other hand, the rich structure in  \T-convex valued fields  lend itself to ways by which such obstacles may be overcome. This is the motivation for the study presented below in this paper. A theory of transforms over both the additive and the multiplicative groups will be the object of a future installment of this series.

The additional structure we shall consider on certain power-bounded \T-convex fields  may seem bizarre at first glance. To have a feel of it, we shall describe its rudiments by way of an example. Some of these are incorporated into an axiomatization in the next section.

A \memph{restricted analytic function} $\R^n \fun \R$ is given on the cube $[-1, 1]^n$ by a power series in $n$ variables over $\R$ that converges in a neighborhood of $[-1, 1]^n$, and $0$ elsewhere. Let $\lan{}{an}{}$ be the language that extends the language of ordered rings with a new function symbol for each restricted analytic function, $\usub{\R}{an}$ the real field with its natural $\lan{}{an}{}$-structure, and  $\usub{T}{an}$ the $\lan{}{an}{}$-theory of $\usub{\R}{an}$. We know from \cite{DMM94} that $\usub{T}{an}$ is a polynomially bounded \omin-minimal theory that is universally axiomatizable and admits quantifier elimination in an enlarged language $\lan{}{an}{}((-)^{-1}, (\sqrt[n]{-})_{n=2,3, \ldots})$ (this language is of course more natural than a brute force definitional extension that achieves the same thing).

A generalized power series with coefficients in the real field $\R$ and exponents in the additive group $\R$ is a formal sum $x = \sum_{q \in \R} a_q t^q$ such that its support $\supp(x) = \set{q \in \R \given a_q \neq 0}$ is well-ordered. Let $\RR$ be the set of all such series. Addition and multiplication on $\R \dpar{ t^{\R} }$ are defined in the expected way, and this makes $\RR$ a field, generally referred to as a Hahn field (with coefficients and exponents both in $\R$). We consider $K \coloneqq \R$ as a subfield of $\RR$ via the embedding $a \efun at^0$. The map $\RR \fun \R$ given by $x \efun \min\supp(x)$ is indeed a valuation. Its valuation ring $\ORR$, $\OO$ for short, consists of those series $x$ with $\min\supp(x) \geq 0$ and its maximal ideal $\MM$ of those series $x$ with $\min\supp(x) > 0$. Its residue field $\K$ admits a section onto $K$, that is, a ring homomorphism $\K \fun \OO$ inverse to the residue map $\res : \OO \fun \K$, and hence is isomorphic to $\R$. It is well-known that $(\RR, \OO)$ is a henselian valued field and $\RR$ is real closed. Restricted analytic functions may be naturally interpreted in $\R \dpar{ t^{\R} }$. According to \cite[Corollary~2.11]{DMM94}, with the induced ordering, $\R \dpar{ t^{\R} }$ is an elementary extension of $\usub{\R}{an}$ and hence a model of $\usub{T}{an}$.

Let $T$ be a complete polynomially bounded \omin-mininal \LT-theory extending $\usub{T}{an}$ that is modeled by $\R$, is universally axiomatized, and also admits quantifier elimination (this is dubbed \memph{hypogenous} in \cite{Yin:tcon:approx}). Recall from \cite[\S~2.2]{Yin:tcon:I} the language $\lan{T}{RV}{}$ and the $\lan{T}{RV}{}$-theory $\TCVF$ of \T-convex valued fields. 

We turn $\RR$ into a $\TCVF$-model as follows. Set $\RV = \R \dpar{ t^{\R} }^{\times} / (1 + \MM)$. Let $\rv : \R \dpar{ t^{\R} }^\times \fun \RV$ be the quotient map. The leading term of a generalized power series in $\RR^\times$ is its first term with nonzero coefficient.  So two series $x$, $y$ have the same leading term if and only if $\rv(x) = \rv(y)$ and hence $\RV$ is isomorphic to the subgroup of $\RR^\times$  of leading terms. There exists a natural isomorphism $a_qt^q \efun (q, a_q)$ from this  group of leading terms to the group $\R \oplus \R^\times$, through which we may identify $\RV$ with $\R \oplus \R^\times$. This identification induces an \memph{angular component map}
\[
\ac: \RR^{\times} \to^{\rv} \RV \to^{a_qt^q \efun a_q} K^\times = \R^\times,
\]
which is to say that the map $\rac \coloneqq \res \circ \ac : \RR^\times \fun \K^\times$ is what is commonly known in the literature as a reduced angular component map (``reduced'' as in ``modulo $\MM$''). Also, since $1 + \MM$ is a convex subset of $\RR^\times$, the total ordering on $\RR^\times$ induces a total ordering $\leq$ on $\RV$, and this ordering $\leq$ is the same as the lexicographic ordering on $\R \oplus \R^\times$ via the identification just made.

Let $\R^{+}$ be the multiplicative group of the positive reals and $\RV^{+} = \R \oplus\R^+ $. Observe that $\R^{+}$ is a convex subgroup of $\RV$. If we write $e^{\R} = \set{e^a \given a \in \R}$ and $\R^\times = \pm e^{\R} \coloneqq e^{\R} \cup - e^{\R}$ then there is a natural isomorphism $ (\R \oplus \R^\times) / \R^{+} \eqqcolon \Gamma \fun \R^\times$ that identifies $\RV^{+} / \R^{+} \cong \R$  with $e^\R$ via the map $q \efun e^q$. Adding a new symbol $\gtop$ to $\RV$, now  we can interpret $\RR$ as an $\lan{T}{RV}{}$-structure, with the \memph{signed valuation} given by
\[
\vv : \RR^\times \fun \Gamma :  x \efun \rv(x) = (q, a_q) \efun \sgn(a_q)e^{-q} \in \R^\times \cong \Gamma,
\]
where $\sgn(a_q)$ is the sign of $a_q$. The discussion in \cite[Example~2.10]{Yin:tcon:I} shows that this indeed yields a $\TCVF$-model.

The crucial difference between $\RR$ and the similarly constructed $\TCVF$-model $\R \dpar{ t^{\Q} }$ is that, in $\RR$,  $\Gamma_{\gtop} \coloneqq \Gamma \cup \{\gtop\} \cong \R$ may be expanded to a \T-model, with $\gtop$ serving as zero, that is isomorphic to the \T-model $\K$, or the section $K$ for that matter. Indeed, all this structure may be expressed through maps from $\RR$ into a single target $K$ instead of two different sorts $\Gamma$ and $\K$; this idea is fleshed out in  the next section.

Denote the real exponentiation by  $\exp : \R \fun \R^+$ and its inverse by $\log : \R^+ \fun \R$. Denote the theory of the structure $(\R, \exp, \log)$ by $T_{\exp}$ and its language by $\lan{}{exp}{}$, where $\R$ is considered as a \T-model. Suppose that $T_{\exp}$ is \omin-minimal, admits quantifier elimination, and is universally axiomatizable (such an exponential \omin-minimal theory may be called  \memph{epigenous}). By \cite[Theorem~B]{DriesSpei:2000}, there is an ample supply of such theories.

It is well-known that $\exp : \R \fun \R^+$ cannot be extended to a total function on $\RR$, but it may be extended to $\OO = \ORR$ and  indeed gives a group isomorphism $\OO \fun \UU^+$, where $\UU^+$ is the subgroup of $\UU \coloneqq \OO^\times$ of positive units in $\OO$, as follows. For  $a \in \UU$, we write $a = \ac(a) + b$ and put
\begin{equation}\label{equ:stan}
\exp(a) = \exp(\ac(a))\exp(b),
\end{equation}
where on the righthand side the first factor is given by the global exponentiation in $K \cong \R$ and the second by the restricted exponentiation in $\RR$, the latter of which could be used if $a \in \MM$; this is compatible with  the restricted exponentiation if $a$ is in the interval $[-1, 1]$. The inverse $\log: \UU^+ \fun \OO$ extends the real logarithm.  So each $a \in \OO$ gives rise to a power function
\begin{equation}\label{RR:power}
(-)^a : \UU^+ \fun \UU^+ : b \efun b^a = \exp(a \log b).
\frac{}{}\end{equation}

Now consider the set
\[
\Delta = \set{a t^{- \log \abs a} \given a \in \R^\times } \sub \RR.
\]
The obvious map $\pi : \Gamma \cong \R^\times \fun \Delta$ is a group homomorphism, actually a cross-section of the value group $\Gamma$, which we call a \memph{diagonal cross-section} of $\Gamma$. Moreover, the map 
\[
at^{- \log \abs a} \efun a \in K^\times,
\]
that is, the restricted angular component map $\ac \rest \Delta$, together with $\pi$, induces a group isomorphism $\Gamma \cong K^\times$, which may be extended to a $T_{\exp}$-isomorphism $\Gamma_{\gtop} \cong K$. The composition $\ac \circ \pi : \Gamma_{\gtop} \fun K$ is abbreviated as $\acpi$.

Note that, traditionally, any cross-section $\csn : \Gamma \fun \VF$ induces a reduced angular component map $\rac : \VF \fun  \K$, but with $\rac(\csn(\Gamma)) = 1$, so the  map $\acpi$ we are considering here  needs to be given separately.

For $\gamma \in \Gamma_{\gtop}$ and $a \in \VF^+$, set
\begin{equation}\label{equ:dequan}
a^\gamma = \pi(\vv(a)^\gamma) \Bigl( \frac{a}{\pi(\vv(a))} \Bigr)^{\acpi(\gamma)} \in \VF^+,
\end{equation}
where the first power is taken via the real exponentiation in $\Gamma_{\gtop}$ and the second one is as given in (\ref{RR:power}). It is straightforward to verify the identities
\[
\vv(a^\gamma) = \vv(a)^\gamma, \quad
  \ac(\pi(\alpha)^\gamma) = \acpi(\alpha)^{\acpi(\gamma)},  \quad (ab)^\gamma = a^\gamma b^\gamma, \quad (a^{\gamma})^{\gamma'} = a^{\gamma \gamma'}.
\]
This operation (\ref{equ:dequan}) is referred to as a \memph{tempered power function} and is denoted by $\varpi_\gamma : \VF^+ \fun \VF^+ : a \efun a^\gamma$. Since $\vv(a+d) = \vv(a)$ if $d$ is sufficiently small, it follows that $\varpi_\gamma$ is differentiable at $a$ if the function
\[
 \vv(a)^{\sharp\sharp} \to^{-/\pi(\vv(a))} \UU^+ \to^{(-)^{\acpi(\gamma)}} \UU^+
\]
is differentiable at $a$, which it is. So $\varpi_\gamma$ is differentiable everywhere and indeed
\begin{align*}
\ddx \varpi_\gamma(a) & =   \pi(\vv(a)^\gamma) \ddx \Bigl( \frac{a}{\pi(\vv(a))} \Bigr)^{\acpi(\gamma)} \\
    & =  \acpi(\gamma) \frac{\pi(\vv(a)^\gamma)}{\pi(\vv(a))}  \Bigl( \frac{a}{\pi(\vv(a))} \Bigr)^{\acpi(\gamma) - 1} \\
    & = \acpi(\gamma) \pi(\vv(a)^{\gamma-1})  \Bigl( \frac{a}{\pi(\vv(a))} \Bigr)^{\acpi(\gamma-1)} \\
    & = \acpi(\gamma) \varpi_{\gamma - 1}(a),
\end{align*}
where the third line holds because $\acpi$ is an isomorphism of $T_{\exp}$-models.

The function on $\VF$ given by $b \efun \varpi_{\vv(b)}$ is ``idempotent'' in the following sense: if $\vv(b) = \vv(b')$ then $\varpi_{\vv(b+b')} = \varpi_{\vv(b)} = \varpi_{\vv(b')}$. This is essentially a trivial statement since $\vv(b) = \vv(b')$ implies $\vv(b+b') = \vv(b)$ (the valuation is signed and hence the leading terms of $b$, $b'$ cannot cancel out).

For each $a \in \VF^+$ with $a \neq 1$, the function $\varrho_a : \Gamma_{\gtop} \fun \VF^+$ given by $\gamma \efun a^\gamma$ is a group monomorphism from  the additive group of the $T_{\exp}$-model $\Gamma_{\gtop}$ into the multiplicative group $\VF^+$. Moreover, setting $\Delta_a = \varrho_a(\Gamma_{\gtop})$, if $\vv(a) \neq 1$ then $\vv \rest \Delta_a$ is a group isomorphism onto $\Gamma^+$ and if $\vv(a) = 1$ then $\Delta_a = K^+$. We think of $\varrho_a$ as a ``warped'' cross-section of $\Gamma_{\gtop}$, reaching the ``limit'' cases as $\vv(a)$ approaches $1$.

For each $a \in \VF^+$ with $a \neq 1$, the function $\overline \exp_a : \VF \fun \VF^+$ given by $x \efun a^{\vv(x)}$ is called a \memph{tempered exponential function}.
 
Tempered power or exponential functions, although the paper's namesake and central to the theory of Mellin transform we envision, are derived from the exponentiation in $K$, the restricted exponentiation in $\RR$, and the diagonal cross-section and hence are a part of the secondary structure. They will not be further studied in this paper.

\begin{rem}
It is easy to see that not only weak \omin-minimality but also dp-minimality fail in this structure of $\RR$ that expands $\TCVF$. It might be a worthwhile exercise to  gauge the exact combinatorial strength of definable sets thereof because this structure is not combinatorial in nature by design. We shall leave it to the interested theologians.
\end{rem}

This paper is a sequel to \cite{Yin:tcon:I, Yin:tcon:1.5} and hence we shall freely use the notation and terminology therein; reminders will be provided along the way.

We  highlight two results. The first is that there is a universal axiomatization of the first-order structure in $\RR$ described above and it admits quantifier elimination. The second is that the universal additive invariant exists and is indeed a generalized Euler characteristic because it takes values in a ring that is canonically isomorphic to $\Z$. More precisely,  let $\VF_* $ be the category of definable sets in $\RR$ and $\textup{Def}_{K}$ the category of $\lan{}{exp}{}$-definable sets in $K$. We abbreviate $\gsk \textup{Def}_{K}$ as $\dO$. There is a commutative diagram
\[
\bfig
  \Square(0,0)/->`->`->`->/<400>[\gsk \VF_*` \dO \llcorner \dO`\ggk \VF_*`\Z;
  \int_+```\int]
 \efig
\]
where $\dO \llcorner \dO$ is constructed from two copies of $\dO$ conjoined at $\N$,  the two horizontal arrows are canonical isomorphisms, and the two vertical arrows are groupifications.

\section{Quantifier elimination}

Let $\lan{\Gamma}{}{}$ be the classical two-sorted language for valued fields, where the two sorts are denoted by $\VF$ and $\Gamma$. The value group sort $\Gamma$ is written multiplicatively. Let $T$ be a hypogenous \LT-theory as described in \S~\ref{example}; so the theory $T_{\exp}$ is epigenous. For convenience, we also assume that \LT~is a \memph{functional language}, see \cite[Remark~2.3]{Yin:tcon:I}. Replace the language of rings in the $\VF$-sort of $\lan{\Gamma}{}{}$ by $\lan{T}{}{}$ and denote the resulting language by $\lan{T \Gamma}{}{}$. We may view $\lan{T\Gamma}{}{}$ as a sub-language of $\lan{T \! \RV}{}{}$. There is an $\lan{T \Gamma}{}{}$-theory such that every model of it can be expanded in a unique way (up to isomorphism) to a model of $\TCVF$ (for this we can ignore the constant symbol $\imath$ in $\lan{T \! \RV}{}{}$). We shall view this theory as an ``$\lan{T \Gamma}{}{}$-reduct'' of $\TCVF$ and, for simplicity, denote it by $\TCVF$ as well. The valuation $\vv : \VF \fun \Gamma_{\gtop}$ is then signed with the middle element $\gtop$.

\begin{defn}
The language $\lan{T\Gamma}{}{}^\flat$ is an expansion of $\lan{T \Gamma}{}{}$ with the following extra symbols:
\begin{itemize}
  \item a unary predicate $K$ in the $\VF$-sort,
  \item a function symbol $\ac$ in  $\VF$-sort,
  \item a function symbol $\pi : \Gamma_{\gtop} \fun \VF$,
  \item a copy of $\lan{T}{}{}$ in the $\Gamma$-sort (merged with the original language of order groups),
  \item two function symbols $\exp$, $\log$ in the $\Gamma$-sort,
  \item two function symbols $\exp$, $\log$ in the $\VF$-sort.
\end{itemize}
\end{defn}

For simplicity, we shall not distinguish in notation the last two pairs of function symbols.

\begin{defn}\label{axioms}
The theory $\TCVF^{\flat}$ is an $\lan{T \Gamma}{}{}^\flat$-expansion of $\TCVF$, which states, in addition to (the $\lan{T\Gamma}{}{}$-equivalents of) the axioms in \cite[Definition~1.3]{Yin:tcon:I}, the following:
\begin{enumerate}[({Ax$^\flat$.} 1)]
  \item $K$ is a subfield of $\OO$.
  \item\label{ax:ang} The function $\ac : \VF^\times \fun K^\times$ augmented by $\ac(0) = 0$ is an angular component map, that is, a group homomorphism such that, for all $a, b \in \OO^\times$,
      \begin{itemize}
        \item $\res(\ac(a)) = \res(a)$,
        \item $\ac(a) = a$ if $a \in K^\times$,
        \item $\ac(a) = \ac(b)$ if $\res(a) = \res(b)$.
      \end{itemize}
  \item The function $\pi : \Gamma \fun \VF^\times$ augmented by $\pi(\gtop) = 0$ is a cross-section, that is, a (multiplicative) group homomorphism with $\vv \circ \pi = \id$.
  \item The functions $\exp, \log : \Gamma_{\gtop} \fun \Gamma_{\gtop}$ are so interpreted that $\Gamma_{\gtop}$ becomes a $T_{\exp}$-model.
  \item For convenience, the functions $\exp, \log : \VF \fun \VF$ are regarded as defined on $K$ since their  interpretations outside $K$  are trivial (constantly zero), and as such they turn $K$ into a $T_{\exp}$-model as well.
  \item\label{ax:diag} The function $\ac \circ \pi \eqqcolon \acpi : \Gamma_{\gtop} \fun K$ is an $\lan{}{exp}{}$-isomorphism.
\end{enumerate}
\end{defn}

\begin{rem}
Here the first two axioms are so formulated that they are in essence universal statements and together imply that the residue map $\res : K \fun \K$ is a field isomorphism. Moreover, since $K$ is a field with nontrivial multiplicative group, the presence of such a field isomorphism as $\acpi$ implies that $\VF$ is nontrivially valued; in $\TCVF$ this is guaranteed by an existential axiom. On the other hand, the presence of a cross-section $\pi$ guarantees that $\vv$ is surjective. The upshot of all this is that  $\TCVF^{\flat}$ would be a universal theory if, in (Ax$^\flat$.~\ref{ax:diag}), $\acpi$ is only required to be an $\lan{}{exp}{}$-monomorphism.
\end{rem}

The signed valuation $\vv$ is by  definition compatible with the total ordering in $\VF$, that is, it is a homomorphism of ordered groups or simply $\vv(\VF^+) = \Gamma^+$, see \cite[Definition~2.7]{Yin:tcon:I}. On the other hand, it would be possible without (Ax$^\flat$.~\ref{ax:diag}) that the angular component map $\ac$ is twisted, that is, $\ac(\VF^+ \mi \OO^\times) = K^-$, despite the restriction $\ac \rest \OO^\times$ being required to be a homomorphism of ordered groups.

Insofar as definable sets are concerned, occasional technical convenience aside, we gain nothing by introducing the $\RV$-sort or even a  residue field sort $\K$ since we already have a cross-section $\pi$, which induces an identification $\RV \cong \Gamma \oplus \K^+$, and $\K$ is now in effect identified with (some structure on) the $T_{\exp}$-model $\Gamma_{\gtop} \cong K$. We will still work with $\RV$ and $\K$ as definable sorts.

In light of (Ax$^\flat$. \ref{ax:diag}), we refer to $\pi$ as a \memph{diagonal cross-section}.

\begin{rem}
As defined in (\ref{equ:stan}), there is a partial exponential function $\exp : \OO \fun \UU^+$ in the $\VF$-sort. If it is only an $\lan{}{exp}{}$-isomorphism $\Gamma_{\gtop} \fun K$ we seek then it seems unnecessary to introduce the diagonal cross-section $\pi$ or even the angular component map $\ac$. But we need a standard part map for the sake of (\ref{equ:stan}). On the other hand, the tempered powers (\ref{equ:dequan}) call for exponentiation in $\Gamma_{\gtop}$ and a cross-section, although not necessarily a diagonal one. A cross-section together with a standard part of course determine an angular component map as well as a diagonal cross-section (if there is one to be had). Thus, in the end, it is leaner to have this $\pi$ to start with.
\end{rem}

\begin{thm}\label{qe}
$\TCVF^{\flat}$ admits quantifier elimination.
\end{thm}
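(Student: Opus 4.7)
The approach is a standard back-and-forth / substructure-embedding test for quantifier elimination: fix two $\TCVF^{\flat}$-models $\mdl M, \mdl N$ with $\mdl N$ sufficiently saturated, a substructure $A \subseteq \mdl M$, and an $\lan{T\Gamma}{}{}^{\flat}$-embedding $f : A \to \mdl N$; one must extend $f$ to an embedding $\mdl M \to \mdl N$. The three workhorses are (i) QE of $\TCVF$ in $\lan{T}{RV}{}$ from \cite{Yin:tcon:I}, which governs extensions in the $\VF$-sort once the $\RV$-data is fixed; (ii) QE and universal axiomatization of the hypogenous theory $T$; and (iii) QE and universal axiomatization of the epigenous theory $T_{\exp}$.

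First I would inventory what a substructure $A$ looks like. It has a $\VF$-part $A_{\VF}$, a $\Gamma$-part $A_{\Gamma}$, and a residue-section part $A_K = A_{\VF} \cap K^M$, coupled tightly by the various symbols: $\vv(A_{\VF}) \subseteq A_{\Gamma}$, $\pi(A_{\Gamma}) \subseteq A_{\VF}$, $\ac(A_{\VF}) \subseteq A_K$, and $\acpi$ restricts to an $\lan{}{exp}{}$-isomorphism $A_{\Gamma} \to A_K$; moreover $A_{\Gamma}$ is a $T_{\exp}$-substructure of $\Gamma^M_{\gtop}$, $A_K$ is a $T_{\exp}$-substructure of $K^M$, and $A_{\VF}$ is a $T$-substructure of $\VF^M$. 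The back-and-forth then proceeds by handling one new element at a time, split into three cases.

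Case (a), a new $\gamma \in \Gamma^M$: using QE of $T_{\exp}$ and saturation, choose $\gamma' \in \Gamma^N$ realizing $\tp_{T_{\exp}}(\gamma / A_{\Gamma})$, and set $f(\gamma) = \gamma'$, $f(\pi(\gamma)) = \pi(\gamma')$, and $f(\acpi(\gamma)) = \acpi(\gamma')$. Case (b), a new $k \in K^M$: apply Case (a) to $\acpi^{-1}(k) \in \Gamma^M$. Case (c), a new $a \in \VF^M$ with $\vv(a) \in A_{\Gamma}$ and $\ac(a) \in A_K$ (the remaining cases having been reduced to (a) and (b)): here the $\TCVF^{\flat}$-extra structure contributes nothing on $a$ beyond what is already pinned down by $\vv(a)$ and $\ac(a)$, so the QE of $\TCVF$ applied to the $\lan{T}{RV}{}$-type of $a$ over $A_{\VF}$ produces the needed $a' \in \VF^N$.

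The delicate point is the consistency check in Case (a): the chosen $\gamma'$ must be such that $\pi(\gamma')$ realizes the correct $\TCVF$-type over $f(A_{\VF})$ and $\acpi(\gamma')$ realizes the correct $T_{\exp}$-type over $f(A_K)$. The second statement is automatic because $\acpi$ is an $\lan{}{exp}{}$-isomorphism in both models, so it transports $T_{\exp}$-types verbatim. The first statement is an orthogonality claim between the $\VF$ and $\Gamma$ sorts: the $\TCVF$-type of $\pi(\gamma')$ over $A_{\VF}$ is controlled purely by $\vv(\pi(\gamma')) = \gamma'$ together with $\ac(\pi(\gamma')) = \acpi(\gamma') \in K$, both of which are already determined by the chosen $T_{\exp}$-type of $\gamma'$. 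This is precisely the sort of orthogonality statement that the $\TCVF$-QE result in \cite{Yin:tcon:I} is designed to furnish, and I expect the bookkeeping needed to thread it through the three sorts and four structural maps simultaneously to be the main technical hurdle of the argument.
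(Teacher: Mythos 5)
Your overall strategy---the Shoenfield embedding test, extending first on $\Gamma$ and $K$ via quantifier elimination for $T_{\exp}$ and then on the $\VF$-sort via quantifier elimination for $T$-convexity---is the same as the paper's. But the step you yourself flag as ``the delicate point'' is a genuine gap, not bookkeeping. The assertion that the $\TCVF$-type of $\pi(\gamma')$ over $A_{\VF}$ is controlled purely by $\vv(\pi(\gamma'))$ and $\ac(\pi(\gamma'))$ is not something the $\lan{T}{RV}{}$-quantifier elimination of $\TCVF$ furnishes, and it is not what the paper relies on. What must actually be controlled is the entire $T$-substructure $\la \bb S, \pi(\alpha) \ra_{T\Gamma}$ generated by the old data together with the new point: a priori it could contain new elements of $K$ and new values, and the extension has to commute with $\ac$, $\pi$, $\exp$, $\log$ on all of them. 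The paper handles this with the Wilkie inequality \cite[\S~5]{Dries:tcon:97}, which pins down exactly the residue field and value group of the generated substructure ($K$ does not grow at all, and $\Gamma$ grows only by the group generated by $\alpha$ over $\Gamma(\bb S)$); only then does it invoke quantifier elimination for $T_{\textup{convex}}$ \cite[Theorem~3.10]{DriesLew95} to produce the $\lan{T\Gamma}{}{}$-extension, after which compatibility with $\ac$ is verified by the explicit computation $\ac(a) = \ac(\pi(\gamma))\ac(a/\pi(\gamma))$. Without the Wilkie inequality (or an equivalent valuation-theoretic input) your ``orthogonality'' claim is unsupported.

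Two further inaccuracies feed into this. First, a substructure only satisfies $\acpi(A_{\Gamma}) \sub A_K$, not that $\acpi$ maps $A_\Gamma$ onto $A_K$: $\acpi^{-1}$ is not in the language, and the first stage of the paper's proof consists precisely in adjoining $\acpi^{-1}(a)$ for $a \in K(\bb S) \mi \acpi(\Gamma(\bb S))$ to restore surjectivity (this failure of surjectivity is also why $\TCVF^{\flat}$ is not literally universal as axiomatized). Second, your case (c) claim that the flat structure ``contributes nothing'' once $\vv(a) \in A_\Gamma$ and $\ac(a) \in A_K$ is only true after $K(\bb M)$ and $\Gamma(\bb M)$ have been exhausted: a new $\VF$-point $a$ with $\vv(a) \in A_\Gamma$ and $\ac(a) \in A_K$ can still generate new $K$- and $\Gamma$-elements (for instance $\ac(a - c)$ for $c \in A_{\VF}$), so the parenthetical ``the remaining cases having been reduced to (a) and (b)'' is exactly the part that requires the ordering the paper imposes---first make $\acpi$ surjective on the substructure, then exhaust $K(\bb M)$ and hence $\Gamma(\bb M)$, and only at the end perform pure $\lan{T\Gamma}{}{}$-extensions---and that ordering is again carried out by means of the Wilkie inequality.
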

\begin{proof}
Let $\bb M \models \TCVF^{\flat}$, $\bb S \sub \bb M$, and $\bb U \models \TCVF^{\flat}$ be highly saturated. Suppose that there is an $\lan{T \Gamma}{}{}^\flat$-embedding $\sigma : \bb S \fun \bb U$. All we need to do is to pass the Shoenfield test, that is, to  extend $\sigma$ to an $\lan{T \Gamma}{}{}^\flat$-embedding $\bb M \fun \bb U$.

To that end,  consider any element $a \in K(\bb S) \mi \acpi(\Gamma(\bb S))$. Let $\alpha = \acpi^{-1}(a)$ and $\la \Gamma(\bb S), \alpha \ra_{\exp}$ be the $T_{\exp}$-submodel of $\Gamma(\bb M)$ generated by $\Gamma(\bb S)$ and $\alpha$. Then $\acpi (\la \Gamma(\bb S), \alpha \ra_{\exp})$ is the $T_{\exp}$-submodel $\la \acpi(\Gamma(\bb S)), a \ra_{\exp}$ of $K(\bb S)$. We may tentatively  extend $\sigma$ by a $T_{\exp}$-embedding $\la \Gamma(\bb S), \alpha \ra_{\exp} \fun \Gamma(\bb U)$, sending $\alpha$ to $\acpi^{-1}(\sigma(a))$. Let $\la \bb S, \pi(\alpha) \ra_{T \Gamma}$ be the $\TCVF$-submodel of $\bb M$ generated by $\bb S$ and $\pi(\alpha)$; so $\VF(\la \bb S, \pi(\alpha) \ra_{T \Gamma})$ is just the \T-submodel of $\bb M$ generated by $\bb S$ and $\pi(\alpha)$. By the Wilkie inequality \cite[\S~5]{Dries:tcon:97}, we have that $K(\la \bb S, \pi(\alpha) \ra_{T \Gamma}) = K(\bb S)$ and $\Gamma(\la \bb S, \pi(\alpha) \ra_{T \Gamma})$ is the subgroup of $\la \Gamma(\bb S), \alpha \ra_{\exp}$ generated by $\alpha$ over $\Gamma(\bb S)$. The latter equality implies that $\pi(\Gamma(\la \bb S, \pi(\alpha) \ra_{T \Gamma}))$ is the subgroup of $\pi(\la \Gamma(\bb S), \alpha \ra_{\exp})$ generated by $\pi(\alpha)$ over $\pi(\Gamma(\bb S))$ and hence is indeed contained in $\la \bb S, \pi(\alpha) \ra_{T \Gamma}$, also  $\acpi(\Gamma(\la \bb S, \pi(\alpha) \ra_{T \Gamma}))$ is the subgroup of $K(\bb S)$ generated by $a$ over $\acpi(\Gamma(\bb S))$. The proof of quantifier elimination in $T_{\textup{convex}}$  \cite[Theorem~3.10]{DriesLew95} then gives an $\lan{T \Gamma}{}{}$-extension $\sigma_\alpha : \la \bb S, \pi(\alpha) \ra_{T \Gamma} \fun \bb U$ of $\sigma$ with
\[
\sigma_\alpha(\pi(\alpha)) = \pi(\acpi^{-1}(\sigma(a))) = \pi(\sigma_\alpha(\alpha)) \quad \text{and hence} \quad \ac(\sigma_\alpha (\pi(\alpha))) = \sigma(a) = \sigma_\alpha (\acpi(\alpha)).
\]
It follows that, for all $\gamma \in \Gamma(\la \bb S, \pi(\alpha) \ra_{T \Gamma})$,
\[
\sigma_\alpha(\pi(\gamma)) = \pi(\sigma_\alpha(\gamma)) \dand \ac(\sigma_\alpha (\pi(\gamma))) = \sigma_\alpha (\acpi(\gamma)).
\]
For any $a \in \VF^\times(\la \bb S, \pi(\alpha) \ra_{T \Gamma})$ with $\vv(a) = \gamma$, we have
\[
\sigma_{\alpha}(\ac(a)) = \sigma_\alpha(\ac(\pi(\gamma))\ac(a / \pi(\gamma))) = \ac(\sigma_\alpha (\pi(\gamma)))\ac(\sigma_\alpha(a / \pi(\gamma))) = \ac(\sigma_\alpha(a)).
\]
So $\sigma_\alpha$ is almost an $\lan{T \Gamma}{}{}^\flat$-extension of $\sigma$, except that $\Gamma(\la \bb S, \pi(\alpha) \ra_{T \Gamma})$ is not a $T_{\exp}$-model (not even a \T-model for that matter). But we can repeat the above construction for any $\alpha' \in \la \Gamma(\bb S), \alpha \ra_{\exp} \mi \Gamma(\la \bb S, \pi(\alpha) \ra_{T \Gamma})$, and so on. Eventually we shall obtain an $\lan{T \Gamma}{}{}$-extension $\sigma' : \bb S' \fun \bb U $ of $\sigma$ with $\Gamma(\bb S') = \la \Gamma(\bb S), \alpha \ra_{\exp}$ and $K(\bb S') = K(\bb S)$, which then must be an $\lan{T \Gamma}{}{}^\flat$-extension as well. Keep going if there is an $a' \in K(\bb S') \mi \acpi(\Gamma(\bb S'))$, we may assume that $K(\bb S') = \acpi(\Gamma(\bb S'))$ and hence $\bb S'$ is a $\TCVF^{\flat}$-model.

Now we may consider any $a \in K(\bb M) \mi K(\bb S')$. By an almost identical construction (quantifier elimination in $T_{\textup{convex}}$ and the Wilkie inequality are applied twice here, once to all $b \in \la K(\bb S'), a \ra_{\exp}$ and then --- in this order --- as above, to all $\gamma \in \acpi^{-1}(\la K(\bb S'), a \ra_{\exp})$), we may assume $K(\bb M) = K(\bb S')$ and hence $\Gamma(\bb M) = \Gamma(\bb S')$. From this point on, any $\lan{T \Gamma}{}{}$-extension of $\sigma$ is an $\lan{T \Gamma}{}{}^\flat$-extension of $\sigma$ and hence we are done by quantifier elimination in $T_{\textup{convex}}$.
\end{proof}

\begin{cor}\label{prime}
$\TCVF^{\flat}$ has a prime model $\bb P$ and hence is complete.
\end{cor}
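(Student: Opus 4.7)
By Theorem~\ref{qe} every $\lan{T\Gamma}{}{}^\flat$-embedding between $\TCVF^{\flat}$-models is elementary, so exhibiting a prime model $\bb P$ is enough: any two models then contain elementarily embedded copies of $\bb P$ and are consequently elementarily equivalent. I propose to carve $\bb P$ out of an arbitrary $\TCVF^{\flat}$-model $\bb M$ by running, starting from the empty set, the same stepwise extension procedure used in the proof of Theorem~\ref{qe}.

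Because $T_{\exp}$ is epigenous (universally axiomatized with QE, and $\lan{}{exp}{}$ contains $0,1$), the $\lan{}{exp}{}$-substructure $K_0 \sub K(\bb M)$ generated by $\emptyset$ is already a $T_{\exp}$-model, and by QE it is the prime $T_{\exp}$-model. By (Ax$^\flat$.~\ref{ax:diag}), $\Gamma_0 := \acpi^{-1}(K_0) \sub \Gamma(\bb M)$ is then a $T_{\exp}$-submodel of $\Gamma(\bb M)$ canonically isomorphic to $K_0$. I take $\bb P \sub \bb M$ to be the $\TCVF$-substructure generated by $K_0 \cup \pi(\Gamma_0)$, so that $\VF(\bb P)$ is the \T-hull of $K_0 \cup \pi(\Gamma_0)$ inside $\VF(\bb M)$. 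The Wilkie inequality \cite[\S~5]{Dries:tcon:97}, applied exactly as in the proof of Theorem~\ref{qe}, gives $K(\bb P) = K_0$ and $\Gamma(\bb P) = \Gamma_0$; hence $\acpi$ remains onto $K(\bb P)$, so (Ax$^\flat$.~\ref{ax:diag}) holds. The remaining axioms (including (Ax$^\flat$.~\ref{ax:ang}), which together with the polar decomposition $a = \pi(\vv(a))\cdot(a/\pi(\vv(a)))$ forces $\ac$ to map $\VF(\bb P)^\times$ into $K_0$) are either universal or inherited from $\bb M$ via the restrictions of $\pi$ and $\ac$, so $\bb P$ is a $\TCVF^{\flat}$-model.

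For primality, given any other $\TCVF^{\flat}$-model $\bb M'$, the unique $\lan{}{exp}{}$-embedding $K_0 \hookrightarrow K(\bb M')$ induces via $\acpi_{\bb M'}^{-1}$ a $T_{\exp}$-embedding $\Gamma_0 \hookrightarrow \Gamma(\bb M')$; transporting $\pi$ and applying quantifier elimination in $T_{\textup{convex}}$ \cite[Theorem~3.10]{DriesLew95} then extends this to an $\lan{T\Gamma}{}{}^\flat$-embedding $\bb P \hookrightarrow \bb M'$, exactly as in the opening steps of the proof of Theorem~\ref{qe}. The only genuinely non-routine ingredient is the Wilkie-inequality control pinning down $K(\bb P) = K_0$ and $\Gamma(\bb P) = \Gamma_0$: without it the \T-hull of $K_0 \cup \pi(\Gamma_0)$ could covertly enlarge the residue field or the value group and break (Ax$^\flat$.~\ref{ax:diag}) for $\bb P$. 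Everything else is routine bookkeeping already carried out in the proof of Theorem~\ref{qe}.
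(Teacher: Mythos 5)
Your proposal is correct and follows essentially the same route as the paper: take the prime $T_{\exp}$-model realized inside $K(\bb M)$ together with its $\acpi$-preimage in $\Gamma(\bb M)$, generate the $\lan{T\Gamma}{}{}$-structure from these (via $\pi$), and invoke the construction from the proof of Theorem~\ref{qe} (Wilkie inequality plus quantifier elimination in $T_{\textup{convex}}$) to see that this is a $\TCVF^{\flat}$-model, unique up to isomorphism, embedding into every model, with completeness then following from Theorem~\ref{qe}.
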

\begin{proof}
Let $\bb M \models \TCVF^{\flat}$ and $\bb R$ be the prime model of $T_{\exp}$. Let $\bb R_{K}$, $\bb R_{\Gamma}$ be two copies of $\bb R$ inside $K(\bb M)$, $\Gamma(\bb M)$, respectively, with $\acpi(\bb R_{\Gamma}) = \bb R_{K}$. By the construction in the proof of Theorem~\ref{qe}, the $\lan{T \Gamma}{}{}$-structure $\bb P$ generated by $\bb R_{K}$, $\pi(\bb R_{\Gamma})$ in $\bb M$ is a $\TCVF^{\flat}$-model with $K(\bb P) = \bb R_{K}$ and $\Gamma(\bb P) = \bb R_{\Gamma}$, and is unique (up to isomorphism and independent of $\bb M$). So $\bb P$ is the prime $\TCVF^{\flat}$-model. In light of Theorem~\ref{qe}, we conclude that $\TCVF^{\flat}$ is complete.
\end{proof}

Henceforth we may and shall work in a sufficiently saturated $\TCVF^{\flat}$-model $\bb U$.

Next, we study definable sets in $K$. Our goal is to show that $K$ is stably embedded, that is, all definable sets in $K$ are actually $\lan{}{exp}{}(K)$-definable in $K$. By Theorem~\ref{qe}, we only need to examine sets $A$ defined by a conjunction of the form $\phi(X, a) \wedge K(X)$, where $X = (X_1, \ldots, X_m)$ are the free variables ranging in the $\VF$-sort, $K(X)$ is short for $\bigwedge_i K(X_i)$, and $a$ is a tuple of  parameters in $\VF$, which shall be dropped from the notation below.

\begin{lem}\label{Knopi}
Suppose that none of the function symbols $\vv$, $\ac$, $\pi$, $\exp$, $\log$ occurs in $\phi$. Then $A$ is \LT(K)-definable in $K$.
\end{lem}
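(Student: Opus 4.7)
My plan is to reduce the lemma to the classical stable embeddedness of a distinguished $T$-elementary substructure within a $T$-convex valued field, established by van den Dries and Lewenberg.

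By Theorem~\ref{qe} I may assume $\phi$ is quantifier-free. The hypothesis excludes $\vv$ and $\pi$, which are the only symbols connecting the $\VF$- and $\Gamma$-sorts, as well as $\ac$, $\exp$, $\log$. Consequently $\phi$ must be a Boolean combination of atoms of three kinds: (i) $\lan{T}{}{}$-atoms on the $\VF$-sort with $\VF$-parameters from $a$; (ii) atoms of the form $K(t(X, a))$ for $\lan{T}{}{}$-terms $t$; and (iii) pure $\Gamma$-sort atoms not involving $X$, which contribute only a fixed truth value. Absorbing (iii), I may assume $\phi(X, a)$ is a quantifier-free formula in $\lan{T}{}{}$ on the $\VF$-sort together with the $K$-predicate.

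Next, I would observe that under $\TCVF^\flat$ the subfield $K$ is an $\lan{T}{}{}$-elementary substructure of $\VF$: indeed, by (Ax$^\flat$.~\ref{ax:diag}) the map $\acpi : \Gamma_{\gtop} \fun K$ is an $\lan{}{exp}{}$-isomorphism, so $K$ is in particular a $T$-model, and elementarity follows from the model-completeness of $T$ (universal with quantifier elimination). A standard consequence of the $T$-convex machinery of \cite{DriesLew95} (see also \cite[\S~2]{Yin:tcon:I}) is that $K$ is stably embedded and pure as a $T$-model in $\VF$: for every $\lan{T}{}{}$-formula $\chi(X, Y)$ and every $a \in \VF^n$, the set $\{X \in K^m : \chi(X, a)\}$ is $\lan{T}{}{}(K)$-definable in $K$. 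This handles the atoms of type (i).

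The main obstacle will be handling the $K$-predicate atoms of type (ii): for every $\lan{T}{}{}$-term $t$, I need the set $\{X \in K^m : t(X, a) \in K\}$ to be $\lan{T}{}{}(K)$-definable. My plan is to apply $T$-cell decomposition of $\VF^m$ over $K \cup \{a\}$, yielding finitely many $\lan{T}{}{}(K, a)$-definable cells on each of which $X \mapsto t(X, a)$ is continuous. On a given cell $C$, the preimage of $K$ under $t(\cdot, a)$ is either all of $C$ (when $t(\cdot, a)|_C$ takes values in $K$) or a proper, lower-dimensional $\lan{T}{}{}(K, a)$-definable subset (the latter using that $K$ is algebraically closed in $\VF$ by elementarity, so a non-constant $\lan{T}{}{}(K, a)$-definable map into $K$ on a cell of full dimension is impossible for $a \notin K$). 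Applying stable embeddedness from the previous paragraph then reduces each piece to an $\lan{T}{}{}(K)$-definable subset of $K^m$. Since Boolean combinations of $\lan{T}{}{}(K)$-definable sets are $\lan{T}{}{}(K)$-definable, this completes the argument.
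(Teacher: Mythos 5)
Your overall strategy is essentially the paper's: reduce to a quantifier-free Boolean combination, note that in the absence of $\vv$, $\pi$, $\ac$, $\exp$, $\log$ the only atoms involving $X$ are $\lan{T}{}{}$-atoms and atoms of the form $K(t(X,a))$, and then invoke stable embeddedness of $K$. Your type-(i) step is correct, and the right reference for it is the Marker--Steinhorn theorem on definable types (which is what the paper cites): since every element of $\VF$ is either infinitely large over $K$ or has a standard part in $K$, every type over $K$ realized in $\VF$ is definable, whence the trace on $K^m$ of any $\lan{T}{}{}(\VF)$-definable set is $\lan{T}{}{}(K)$-definable.

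The genuine gap is in your treatment of the atoms $K(t(X,a))$, which is the real content of the lemma. The dichotomy you assert on a cell $C$ --- the preimage of $K$ is either all of $C \cap K^m$ or a proper, lower-dimensional, $\lan{T}{}{}(K,a)$-definable subset --- is not justified: the reason offered (``a non-constant definable map into $K$ on a full-dimensional cell is impossible'') is false as stated ($X \mapsto X_1$ is such a map), and, more seriously, the set $\{x \in C\cap K^m : t(x,a) \in K\}$ is not a priori an $\lan{T}{}{}(K,a)$-definable set at all, since $K$ is not $\lan{T}{}{}$-definable in $\VF$; you therefore cannot speak of its dimension or apply o-minimal dimension theory to it before establishing the very definability you are trying to prove. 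Even granting the dichotomy, ``lower-dimensional'' does not by itself yield $\lan{T}{}{}(K)$-definability without an induction that you have not set up, and your closing appeal to ``stable embeddedness from the previous paragraph'' does not apply, because ``$t(X,a) \in K$'' is not an $\lan{T}{}{}$-condition. The repair is short and is exactly what the paper does: the graph $\set{(X,Y) \in K^{m+1} \given t(X,a) = Y}$ is the trace on $K^{m+1}$ of an $\lan{T}{}{}(a)$-definable subset of $\VF^{m+1}$, hence is $\lan{T}{}{}(K)$-definable by Marker--Steinhorn, and $\set{X \in K^m \given t(X,a) \in K}$ is its projection, hence $\lan{T}{}{}(K)$-definable since $K \models T$. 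This makes the cell decomposition unnecessary.
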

\begin{proof}
Since $\vv$ does not occur in $\phi$ and $\phi$ only has $\VF$-sort variables, we may assume that $\phi$ is a $\VF$-sort formula. By the Marker-Steinhorn theorem \cite{makr:stein:deftype}, we may  delete any \LT-literals from $\phi$ and thereby assume that $\phi$ is a formula of the form $\bigwedge_i K(t_i(X)) \wedge \bigwedge_j \neg K(t_j(X))$, where $t_i$, $t_j$ are \LT-terms. In fact, without loss of generality, we may as well assume that $\phi$ is an atomic formula $K(t(X))$. Again, the Marker-Steinhorn theorem guarantees that the set defined by the formula $t(X) = Y \wedge K(X, Y)$
is $\lan{T}{}{}(K)$-definable in $K$, of which $A$ is a projection.
\end{proof}

\begin{lem}\label{kexp}
Suppose that none of the function symbols $\vv$, $\ac$, $\pi$ occurs in $\phi$. Then $A$ is $\lan{}{exp}{}(K)$-definable in $K$.
\end{lem}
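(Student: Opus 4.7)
The plan is to adapt the proof of Lemma \ref{Knopi} by replacing the language $\lan{T}{}{}$ with $\lan{}{exp}{}$ and invoking the Marker-Steinhorn theorem for the o-minimal theory $T_{\exp}$ in place of $T$. The crucial structural fact to exploit is that, by Ax$^\flat$.1 and Ax$^\flat$.5, $K$ is a subfield of $\OO$ on which the $\VF$-sort functions $\exp, \log$ act as genuine $T_{\exp}$-operations (and as the constant $0$ on $\VF \setminus K$). Consequently, any $\lan{T}{}{} \cup \{\exp, \log\}$-term in the $\VF$-sort, when evaluated at arguments in $K$, returns a value in $K$.

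I would first eliminate the $\Gamma$-sort subformulas of $\phi$: because $\vv, \ac, \pi$ do not occur, no $\lan{T \Gamma}{}{}^\flat$-term takes $\VF$-sort arguments to a $\Gamma$-sort value, so each $\Gamma$-sort subformula has no free $\VF$-variable and may be replaced by its truth value (using quantifier elimination for $T_{\exp}$ on $\Gamma_{\gtop}$, which is a $T_{\exp}$-model by Ax$^\flat$.4). This reduces $\phi$ to a pure $\VF$-sort formula. I would then follow Lemma \ref{Knopi} verbatim: by the Marker-Steinhorn theorem \cite{makr:stein:deftype} applied to $T_{\exp}$, delete the $\lan{}{exp}{}$-literals from $\phi$ to reduce to a conjunction of the form $\bigwedge_i K(t_i(X)) \wedge \bigwedge_j \neg K(t_j(X))$; without loss of generality $\phi = K(t(X))$; and by a second application of Marker-Steinhorn, the graph $\set{(X, Y) \in K^{m+1} \given t(X) = Y}$ is $\lan{}{exp}{}(K)$-definable in $K$, so $A$ is its projection.

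The main obstacle is that $\VF$ is not itself a $T_{\exp}$-model --- the $\VF$-sort $\exp, \log$ are only defined on $K$ and set to $0$ elsewhere by Ax$^\flat$.5 --- so Marker-Steinhorn does not literally apply to $\bb U$ in the language $\lan{}{exp}{}$. I would circumvent this by splitting each $\VF$-sort quantifier $\exists Y$ in the prenex form of $\phi$ according to whether $Y \in K$ or $Y \notin K$. In the case $Y \in K$, the subformula is handled inductively by the present lemma applied inside the $T_{\exp}$-model $K$. In the case $Y \notin K$, every occurrence of $\exp(Y)$ and $\log(Y)$ collapses to $0$, so the resulting subformula contains no $\exp, \log$ applied to $Y$; treating the remaining $K$-valued subterms $\exp(s(X))$, $\log(s(X))$ as auxiliary parameters in $K$, Lemma \ref{Knopi} takes over. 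Assembling the two cases yields an $\lan{}{exp}{}(K)$-definition of $A$.
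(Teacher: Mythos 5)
You correctly identify the real obstacle --- the $\lan{}{exp}{}$-reduct of $\bb U$ is not a $T_{\exp}$-model, so Marker--Steinhorn cannot be invoked for $T_{\exp}$ the way it is for $T$ in Lemma~\ref{Knopi} --- but your way around it does not engage with where the difficulty actually sits, so the argument has a genuine gap. By Theorem~\ref{qe} the formula $\phi$ is quantifier-free, so there are no $\VF$-sort quantifiers $\exists Y$ in a prenex form to split on. The difficulty comes from the subterms $\exp(t(X))$, $\log(t(X))$ in which $t$ is an $\lan{T}{}{}$-term carrying parameters from $\VF$ (not from $K$): even for $X$ ranging over $K^m$ the value $t(X)$ need not lie in $K$ (take $t(X)=X_1+a$ with $a\in\VF\mi K$), so your opening ``structural fact'' that such terms send $K$ into $K$ fails in the presence of the parameters, and the required case distinction is on the values of these terms, uniformly in $X$, i.e.\ inside the formula, not on quantified variables. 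Moreover, treating the remaining subterms $\exp(s(X))$, $\log(s(X))$ ``as auxiliary parameters in $K$'' is not legitimate: they vary with $X$, and Lemma~\ref{Knopi} applies only to formulas in which $\exp$ and $\log$ do not occur at all. Note also that after your first Marker--Steinhorn step the literals $K(t_i(X))$ may still contain $\exp$, $\log$ inside $t_i$, so the second application faces the same obstacle.

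What is missing is a mechanism for the $\exp$, $\log$ occurrences themselves, which the paper supplies by induction on their nesting depth. For the innermost such terms (those $\exp(t(X))$, $\log(t(X))$ with $t$ an $\lan{T}{}{}$-term), one first settles the branch carrying the conjunct $\bigwedge_t \neg K(t(X))$ by replacing these terms with the constant $0$ and invoking the inductive hypothesis; on the branch carrying $\bigwedge_t K(t(X))$, one replaces them with new variables $Z$ constrained to $K$ and adjoins the conditions $Z=\exp(Y)\wedge Y=t(X)$ with $Y$ also constrained to $K$, where $Z=\exp(Y)$ is a genuine $\lan{}{exp}{}$-condition over the $T_{\exp}$-model $K$, $Y=t(X)$ is handled exactly as in Lemma~\ref{Knopi}, and the modified formula has strictly smaller nesting depth; finally $A$ is recovered as a projection. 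Your sketch contains neither this variable-introduction-plus-projection step nor any treatment of nested occurrences of $\exp$, $\log$, so as written it does not yet constitute a proof.
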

\begin{proof}
As in the proof of Lemma~\ref{Knopi}, we may assume that $\phi$ is a $\VF$-sort formula. But we cannot appeal to the Marker-Steinhorn theorem directly because the $\lan{}{exp}{}$-reduct of $\bb U$ is not required to model $T_{\exp}$; indeed, this is the whole point of formulating a theory such as $\TCVF^{\flat}$. We introduce a syntactical complexity function $\mu$, assigning a natural number to each quantifier-free formula  (or a term) that indicates the highest nesting level of the function symbols $\exp, \log : K \fun K$, for instance, $\mu(\exp(\exp(X)\log(X))) = 2$; for this purpose there is no need to distinguish between $\exp$ and $\log$. This is a standard syntactical device and we omit its definition here, see \cite[Definition~2.16]{Yin:int:expan:acvf} for an analogue.

We proceed by induction on $\mu(\phi)$. The base case $\mu(\phi) = 0$ is Lemma~\ref{Knopi}. For the inductive step, consider terms of the form $\exp(t(X))$ or $\log(t(X))$ that occur in $\phi(X)$, where $t(X)$ is an \LT-term. The inductive hypothesis may be applied to the formula $\phi(X) \wedge K(X) \wedge \bigwedge_t \neg K(t(X))$, because all occurrences of $\exp(t(X))$ and $\log(t(X))$ therein  can be replaced by the constant $0$. Thus, without loss of generality, we may assume that $\phi$ contains $\bigwedge_t  K(t(X))$ as a conjunct.  Let $\phi'(X, Z)$ be the formula obtained from $\phi$ by replacing occurrences of $\exp(t(X))$ and $\log(t(X))$ with new variables $Z_{\exp, t}$ and $Z_{\log, t}$. So $\mu(\phi') < \mu(\phi)$. Let $\psi_{\exp}(X, Y, Z)$ be the formula
\[
\bigwedge_t Z_{\exp, t} = \exp(Y_{\exp, t}) \wedge t(X) = Y_{\exp, t};
\]
similarly for $\psi_{\log}(X, Y, Z)$. The inductive hypothesis implies that the set defined by the formula
\[
\phi'(X, Z)  \wedge \psi_{\exp}(X, Y, Z) \wedge \psi_{\log}(X, Y, Z) \wedge  K(X, Y, Z)
\]
is $\lan{}{exp}{}(K)$-definable in $K$, of which $A$ is a projection.
\end{proof}

\begin{prop}\label{Kstab}
In general, $A$ is $\lan{}{exp}{}(K)$-definable in $K$.
\end{prop}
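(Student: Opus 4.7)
My plan is to extend the inductive strategy of Lemmas~\ref{Knopi} and~\ref{kexp} by introducing a further syntactic complexity measure $\nu(\phi)$ that counts the total number of occurrences of $\vv$, $\ac$, $\pi$ in $\phi$, and to induct on $\nu(\phi)$ with base case $\nu(\phi) = 0$ furnished by Lemma~\ref{kexp}.

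For the inductive step, I would single out an \emph{innermost} occurrence of one of these three function symbols, i.e., one whose argument contains none of $\vv$, $\ac$, $\pi$. If the occurrence is $\pi(s)$, then since no $\vv$ appears in $s$ and the free variables $X$ range in the $\VF$-sort, the $\Gamma$-term $s$ depends only on $\Gamma$-parameters and hence evaluates to a fixed $\gamma_0 \in \Gamma_{\gtop}$; absorbing $\pi(\gamma_0) \in \VF$ as a parameter drops $\nu$ by one. If the occurrence is $\ac(t)$ for a $\VF$-term $t$ free of $\vv, \ac, \pi$, I would introduce a fresh $K$-variable $Z$ replacing $\ac(t(X))$ throughout $\phi$, so that $A$ becomes a projection onto $X$ of the higher-dimensional $K$-set cut out by $\phi'(X, Z) \wedge Z = \ac(t(X)) \wedge K(X) \wedge K(Z)$ with $\nu(\phi') = \nu(\phi) - 1$. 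The case of an innermost $\vv(t)$ is analogous: I introduce $V \in K$ for $\acpi(\vv(t(X)))$ and rewrite the $\Gamma$-atomic subformulas involving $\vv(t(X))$ as equivalent $K$-atomic subformulas in $V$ via the $\lan{}{exp}{}$-isomorphism $\acpi$ of (Ax$^{\flat}$.\ref{ax:diag}).

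The main obstacle is that the auxiliary constraints $Z = \ac(t(X))$ and $V = \acpi(\vv(t(X)))$, with $X, Z, V \in K$ and $t$ a $\VF$-term free of $\vv, \ac, \pi$, themselves carry an $\ac$ or $\vv$ and so do not visibly reduce $\nu$. To resolve this, I would establish a subsidiary claim: such atomic constraints are themselves $\lan{}{exp}{}(K)$-definable. The plan for the subsidiary claim is a structural induction on $t$, using the multiplicativity of $\ac$, the case analysis of $\ac(a+b)$ and $\vv(a+b)$ in terms of comparisons between $\vv(a)$ and $\vv(b)$, the triviality of $\exp, \log$ on $\VF$ outside $K$, and the Wilkie inequality together with quantifier elimination in $T_{\textup{convex}}$ (invoked as in the proof of Theorem~\ref{qe}) to organize the case distinctions into finitely many $\lan{}{exp}{}(K)$-definable zones in $X$. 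With the subsidiary claim in hand, the inductive step descends to Lemma~\ref{kexp} and the Proposition follows.
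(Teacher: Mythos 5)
There is a genuine gap, and it sits exactly where you placed your ``subsidiary claim''. That claim --- that the sets cut out by $Z = \ac(t(X))$ and $V = \acpi(\vv(t(X)))$ with $X, Z, V$ in $K$ are $\lan{}{exp}{}(K)$-definable --- is not an auxiliary lemma but is the entire content of the proposition specialized to atomic formulas, so your outer induction on $\nu$ only defers the problem to it. Your proposed proof of the claim by structural induction on $t$ then fails for two reasons. First, $\lan{T}{}{}$-terms are not built from $+$ and $\times$ alone: $T$ extends $\usub{T}{an}$ and its terms involve restricted analytic (and other \omin-minimal) function symbols $F$, for which there is no syntactic recursion expressing $\ac(F(a_1,\dots,a_k))$ or $\vv(F(a_1,\dots,a_k))$ in terms of the $\ac(a_i)$ and $\vv(a_i)$. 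Second, even the addition case breaks down: when the leading terms of $a$ and $b$ cancel (same $\Gamma$-component of the signed value, opposite signs), $\ac(a+b)$ and $\vv(a+b)$ are \emph{not} determined by $(\ac(a),\vv(a),\ac(b),\vv(b))$ --- compare $a = 1+t$ and $a' = 1+2t$ against $b = -1+t^2$ in $\RR$ --- and the locus in $X$-space where such cancellation occurs inside a compound term is precisely where all the difficulty of the proposition concentrates. The Wilkie inequality and quantifier elimination in $T_{\textup{convex}}$, which you invoke to ``organize the case distinctions'', do not by themselves produce the required decomposition into $\lan{}{exp}{}(K)$-definable zones.

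The paper's proof supplies the missing machinery. It applies \cite[Lemma~3.28]{Yin:tcon:I} to obtain an $\lan{T}{RV}{}$-definable set $V \sub \RV^m$ with $\dim_{\RV}(V) < m$ away from which the term functions $t_i$ are $\rv$-contractible; on the good part $B$, \cite[Theorem~A]{Dries:tcon:97} identifies the $\rv$-contractions with $\lan{T}{}{}(\K)$-definable (hence term-given) functions and \cite[Proposition~5.8]{Dries:tcon:97} makes the value-group part finite, which is what lets one replace $\vv(t_i(X))$ by constants and $\ac(t_i(X))$ by $\lan{T}{}{}$-terms in $X$; on the exceptional part $C$, the \omin-minimal dimension drops, so $A \cap \ac(C)$ is covered by graphs of finitely many term-given functions $K^{m-1} \fun K$ and one inducts on the number of variables appearing under $\vv$ or $\ac$ rather than on an occurrence count. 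To repair your argument you would need to import exactly these three ingredients (contractibility off a small set, stable embeddedness of $\K$ in $T_{\textup{convex}}$, and finiteness of the $\Gamma$-part of the contraction) together with the secondary induction on the number of variables; the purely syntactic recursion cannot be made to work.
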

\begin{proof}
Here let $\mu$ be defined so to indicate the highest nesting level of  the  function symbols  $\vv$, $\ac$ in a formula or a term. It will become clear that there is no need to involve $\pi$ in $\mu$. For instance, if $\mu(\phi) = 0$ then none of the $\VF$-sort variables in $\phi$ can appear in the scope of an occurrence of $\pi$, for otherwise $\vv$ must also occur therein and hence $\mu(\phi) > 0$, so all occurrences of $\pi$ in $\phi$, if there is any, may be replaced by parameters in $\VF$.

Consider terms of the form $\vv(t_i(X))$ or $\ac(t_i(X))$ in $\phi$ with $\mu(t_i) = 0$. So we may assume that each $t_i$ is a $\VF$-sort term. Let $t_{ij}(X)$ enumerate the terms of the form $\exp(s(X))$ or $\log(s(X))$ that occur in $t_i$, but not in the scope of any occurrence of  $\exp$ or $\log$. By replacing each $t_{ij}$ with a new $K$-variable as in the proof of Lemma~\ref{kexp}, we may assume that every $t_i$ is an \LT-term (the original set would be a projection). From this point on we proceed by induction on the pair $(\mu(\phi), l)$ of natural numbers, where $l$ is the number of variables that are in  the scope of some occurrence of $\vv$ or $\ac$ (so $l \leq m$). The cases $(0, l)$ have been shown in Lemma~\ref{kexp}.

In the inductive step, for ease of notation, let us assume that every variable $X_j$ indeed occurs in  every term $t_i$ (so $l = m$) and, by the inductive hypothesis, $\phi$ contains $\bigwedge_j X_j \neq 0$ as a conjunct. By \cite[Lemma~3.28]{Yin:tcon:I}, there exists an $\lan{T}{RV}{}$-definable set $V \sub \RV^m$ with $\dim_{\RV}(V) < m$ (see \cite[Definition~2.27]{Yin:tcon:I}) such that, away from $\rv^{-1}(V)$, the functions $\VF^m \fun \VF$ given by the terms $t_i$ are $\rv$-contractible (see \cite[Definition~3.27]{Yin:tcon:I}). Let $B = \rv^{-1}((\K^\times)^m \mi V)$, $C = \rv^{-1}((\K^\times)^m \cap V)$, and $t_{i\downarrow}: (\K^\times)^m \mi V \fun \RV$ be the $\rv$-contraction of $t_i \rest B$.

By \cite[Theorem~A]{Dries:tcon:97}, the set $(\K^\times)^m \cap V$ is \LT($\K$)-definable in $\K$. It follows that $\ac(C)$ is \LT($K$)-definable in $K$ of \omin-minimal dimension less than $m$ and hence may be covered by the graphs of finitely many \LT($K$)-definable functions $K^{m-1} \fun K$. The proof of \cite[Corollary~2.15]{DMM94} shows that every definable function in a hypogenous theory is given piecewise by terms. Therefore, in defining $A \cap \ac(C)$, we can modify $\phi$ so to use less variables, which in effect lowers the number $l$.

On the other hand, by \cite[Proposition~5.8]{Dries:tcon:97}, $\bigcup_i \vrv(t_{i\downarrow}((\K^\times)^m \mi V))$ is finite; we may as well assume it is a singleton, say $\alpha$. Thus, in defining $A \cap \ac(B)$, we can modify $\phi$ by replacing each $\vv(t_i(X))$ with an element of the form $\vv(s_i)$, where $s_i$ is a closed \LT-term (no variables). This also means that all occurrences of $\pi$ below the next nesting level may be eliminated. By \cite[Theorem~A]{Dries:tcon:97} again, the $\rv$-contraction $t'_{i\downarrow}$ of the function $t'_i : B \fun \OO^\times$ given by
\[
x \efun t_i(x)\acpi(\alpha) / \pi(\alpha)
\]
is \LT($\K$)-definable in $\K$ and hence may be regarded as an \LT($K$)-definable function $\ac(B) \fun K$ given by the \LT-term $t'_{i\downarrow}$. For all $b \in \ac(B)$,
\[
t'_{i\downarrow}(b) = \ac(t_i(x)\acpi(\alpha) / \pi(\alpha)) = \ac(t_i(b)).
\]
Thus, in defining $A \cap \ac(B)$, we can modify $\phi$ by replacing each $\ac(t_i(X))$ with  $t'_{i\downarrow}(X)$. All this in effect lowers the number $\mu(\phi)$.

The proposition now follows from the inductive hypothesis.
\end{proof}

It may be argued that creating two separate regions $\Gamma$ and $K$ in the structure only to be immediately identified via an isomorphism is a clumsy setup. Therefore, we introduce a slimmer one-sorted language which will perhaps make our discussion below conceptually  more appealing.

\begin{defn}
The language $\lan{T}{K}{}$ is an expansion of the language $\lan{T}{}{}$. It has just one sort $\VF$ with one extra unary predicate $K$ and five extra function symbols $\vv$, $\ac$, $\pi$, $\exp$, $\log$.
\end{defn}


\begin{defn}
The axioms in Definition~\ref{axioms} can be simplified somewhat in $\lan{T}{K}{}$:
\begin{enumerate}[({Ax.} 1.)]
  \item $K$ is a $T_{\exp}$-model.
  \item $\vv : \VF^\times \fun K^\times$ is a signed valuation map with $K$ also a subfield of $\OO$.
  \item $\ac : \VF^\times \fun K^\times$ is an angular component map.
  \item $\pi : K^\times \fun \VF^\times$ is a section of both $\vv$ and $\ac$, that is,  a group homomorphism such that $\vv \circ \pi = \ac \circ \pi = \id$.
\end{enumerate}
We denote this theory by $\TKVF$.
\end{defn}

Since $K$ and $\Gamma$ are now one and the same, $\TKVF$ is a universal theory. So every substructure of a $\TKVF$-model $\bb M$ is a $\TKVF$-submodel, in particular, every definable closure $\dcl(A)$ of a set $A$ in $\bb M$, is a $\TKVF$-submodel.

\begin{rem}
The relations between the three functions $\ac$, $\vv$, and $\pi$, as group homomorphisms, may be illustrated in a diagram:
\[
\pi(K^\times) \cdot (1 + \MM) \mon^f \VF^\times \two^{\ac}_{\vv} K^\times \epi^g 1
\]
where $f$ is the equalizer and $g$ the coequalizer (in the category of abelian groups, say).
\end{rem}

\begin{thm}\label{kqe}
$\TKVF$ is complete, has a prime model, and admits quantifier elimination.
\end{thm}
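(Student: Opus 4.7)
The plan is to deduce the theorem from Theorem~\ref{qe} and Corollary~\ref{prime} via the natural correspondence between $\TCVF^\flat$-models and $\TKVF$-models furnished by the isomorphism $\acpi : \Gamma_{\gtop} \to K$ built into the axioms of $\TCVF^\flat$. Given a $\TCVF^\flat$-model $\bb M$, I would discard the $\Gamma$-sort and transport $\vv$ and $\pi$ along $\acpi$ to obtain $\acpi\circ\vv : \VF^\times \fun K^\times$ and $\pi\circ\acpi^{-1} : K^\times \fun \VF^\times$; the resulting $\lan{T}{K}{}$-structure satisfies (Ax.~1)--(Ax.~4) directly from (Ax$^\flat$.~1)--(Ax$^\flat$.~\ref{ax:diag}). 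Conversely, any $\TKVF$-model yields a $\TCVF^\flat$-model by taking $\Gamma_{\gtop}$ to be an isomorphic copy of $K$ with $\acpi$ the identity. These assignments are mutually inverse up to canonical isomorphism and carry $\lan{T}{K}{}$-substructures to precisely those $\lan{T\Gamma}{}{}^\flat$-substructures on which $\acpi$ is already surjective.

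For quantifier elimination, I would apply the Shoenfield test directly in $\lan{T}{K}{}$. Let $\bb S \sub \bb M \models \TKVF$, let $\bb U \models \TKVF$ be sufficiently saturated, and let $\sigma : \bb S \fun \bb U$ be an $\lan{T}{K}{}$-embedding. Translating via the correspondence above, $\sigma$ becomes an $\lan{T\Gamma}{}{}^\flat$-embedding between the associated $\TCVF^\flat$-models; Theorem~\ref{qe} then extends it to $\bb M$, and transporting back yields the required $\lan{T}{K}{}$-extension. Completeness and the prime model transfer in the same way: the prime $\TCVF^\flat$-model $\bb P$ of Corollary~\ref{prime} is already built from matching copies of the prime $T_{\exp}$-model inside $K(\bb P)$ and $\Gamma(\bb P)$, so its image under the correspondence is a prime $\TKVF$-model, and completeness follows from quantifier elimination plus the existence of this common prime model.

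The only point requiring care is that substructures interact well with the correspondence. Since $\TKVF$ is universal (all its axioms become universal once $K$ and $\Gamma$ are merged), every subset of a $\TKVF$-model closed under the $\lan{T}{K}{}$-operations is automatically a $\TKVF$-submodel, so the associated $\TCVF^\flat$-substructure (with $\Gamma(\bb S)$ taken as a copy of $K(\bb S)^\times$) is well-defined and on it $\acpi$ is a bijection. Compared with the proof of Theorem~\ref{qe}, the one-sorted formulation simply eliminates the first stage of the Shoenfield argument --- where $\acpi(\Gamma(\bb S))$ had to be enlarged inside $K(\bb S)$ and vice versa --- leaving only the standard extension of $\sigma$ across new $K$-elements and then new $\VF$-elements using quantifier elimination in $T_{\exp}$, quantifier elimination in $T_{\textup{convex}}$, and the Wilkie inequality. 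The mildly technical bookkeeping, rather than any new idea, is the main thing to verify.
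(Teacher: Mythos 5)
Your proposal is correct and follows essentially the same route as the paper: both pass between $\TKVF$-models and $\TCVF^{\flat}$-models via $\acpi$ (using universality of $\TKVF$ so that substructures transfer), deduce quantifier elimination from Theorem~\ref{qe} by the Shoenfield test, obtain the prime $\TKVF$-model from the prime $\TCVF^{\flat}$-model of Corollary~\ref{prime}, and conclude completeness from quantifier elimination together with the prime model.
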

\begin{proof}
Given any $\TKVF$-model $\bb M$ there is  a $\TCVF^{\flat}$-model $\bb M^\flat$ such that if we interpret the function $\vv$ in $\bb M$  as the function $\acpi \circ \vv$ in $\bb M^\flat$ then $\bb M^\flat$ admits a $0$-interpretation of $\bb M$ via the identity map on the $\VF$-sort. Moreover, if $\bb N$ is a $\TKVF$-submodel of $\bb M$ then $\bb N^\flat$ is a $\TCVF^{\flat}$-submodel of $\bb M^\flat$. This shows that the corresponding $\TKVF$-model that the prime $\TCVF^{\flat}$-model interprets is indeed the prime $\TKVF$-model. Since a substructure of $\bb M$ is a $\TKVF$-model and $\TCVF^{\flat}$ admits quantifier elimination, it follows that $\TKVF$  passes the Shoenfield test and hence admits quantifier elimination as well. So the prime $\TKVF$-model is an elementary submodel of any $\TKVF$-model, which implies that $\TKVF$ is complete.
\end{proof}

\begin{rem}
Henceforth we shall also view the $\TCVF^{\flat}$-model $\bb U$, via the interpretation just described above, as a sufficiently saturated $\TKVF$-model. Then Proposition~\ref{Kstab} also holds in $\TKVF$. We remark that it appears to be more difficult, perhaps only for psychological reasons, to establish this directly without a detour through $\TCVF^{\flat}$, which is the primary reason for introducing it. On the other hand, the following result is not available for $\TCVF^{\flat}$.
\end{rem}

\begin{cor}\label{fun:yerm}
Every definable function is given piecewise by terms. Consequently, the substructure generated by a set $A$ is indeed $\dcl(A)$.
\end{cor}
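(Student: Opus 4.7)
The plan is to extract the corollary directly from the conjunction of quantifier elimination (Theorem~\ref{kqe}) and the universal axiomatization of $\TKVF$, which was already flagged in the paragraph preceding the corollary. The universality means that every substructure of a $\TKVF$-model is itself a $\TKVF$-submodel, and QE then upgrades every embedding between $\TKVF$-models to an elementary one. Thus the substructure $\la A \ra$ generated by $A$ inside $\bb U$ is an elementary submodel, $\la A \ra \esub \bb U$.

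First I would establish the second (``consequently'') clause: $\dcl(A) = \la A \ra$. The inclusion $\la A \ra \sub \dcl(A)$ is immediate, because each element of the substructure is the value $t(\bar a)$ of some $\lan{T}{K}{}$-term on a tuple $\bar a$ from $A$, and is therefore $A$-definable. For the reverse inclusion, take any $b \in \dcl(A)$; by QE, $b$ is the unique solution of a quantifier-free $\lan{T}{K}{}(A)$-formula $\phi(x)$. Since $\la A \ra \esub \bb U$, the same formula $\phi$ has a unique solution in $\la A \ra$, which must coincide with $b$; hence $b \in \la A \ra$.

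For the first clause, let $f : D \fun \bb U$ be a definable function, and fix any $a \in D$. Then $f(a) \in \dcl(A \cup \{a\})$, which, by the previous paragraph applied to $A \cup \{a\}$, equals $\la A \cup \{a\} \ra$. Consequently, there exist an $\lan{T}{K}{}$-term $t_a$ and a tuple $\bar c_a$ of parameters from $A$ such that $f(a) = t_a(a, \bar c_a)$. The definable set
\[
D_{t_a, \bar c_a} \coloneqq \set{x \in D \given f(x) = t_a(x, \bar c_a)}
\]
contains $a$, and as $a$ varies over $D$ these sets cover $D$. By saturation of $\bb U$ (equivalently a standard compactness argument using completeness from Theorem~\ref{kqe}), finitely many of them, say $D_1, \ldots, D_n$, already cover $D$; the differences $D_i \mi \bigcup_{j < i} D_j$ give the desired finite definable partition of $D$ on each piece of which $f$ agrees with a single term. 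The only potentially subtle point is the appeal to saturation—one must verify that the family of ``$f$ agrees with some term'' conditions is a definable cover, which it is, piece by piece—but no new ideas beyond QE and universality are needed.
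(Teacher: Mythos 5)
Your argument is correct and is precisely the standard proof that the paper itself invokes by citation: the paper's proof simply notes that the statement holds for any theory with quantifier elimination and a universal axiomatization and refers to the proof of \cite[Corollary~2.15]{DMM94}, which is exactly the QE-plus-universality-plus-compactness argument you spell out. The only cosmetic difference is that you derive the $\dcl$ clause first and deduce the piecewise-by-terms clause from it, whereas the statement is phrased in the opposite order; this is harmless.
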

\begin{proof}
This holds for any theory that admits quantifier elimination and a universal axiomatization, for instance, hypogenous and epigenous theories, a fact we have used  in the proof of Proposition~\ref{Kstab} above. For a complete argument, see  the proof of \cite[Corollary~2.15]{DMM94}.
\end{proof}

\begin{nota}\label{bi:cor}
For each $a \in \VF$, write $\ac(a)$ as $\dot a$ and $\vv(a)$ as $\ddot a$, and denote the map $a \efun (\dot a, \ddot a)$ by $\av : \VF \fun K^2$. For each set $A$, let $\dot A = \set{\dot a \given a \in A}$ and $\ddot A = \set{\ddot a \given a \in A}$. The definable closure of $\dot A \cup \ddot A$ in the $T_{\exp}$-model $K$ is denoted by $\dcl_K(A)$, which is a $T_{\exp}$-submodel of $K$. Write $\pi(\dcl_K(A))$ as $\Delta(A)$ and, in particular, $\pi(K)$ as $\Delta$. Denote the set $\set{ab \given a \in \dcl_K(A), b \in \Delta(A)}$ by $\KD(A)$, or simply $\KD$ if $\dcl_K(A) = K$. By the construction in the proof of Theorem~\ref{qe},
\[
K(\dcl(A)) = \dcl_K(A) \dand \KD(\dcl(A)) = \KD(A).
\]

Denote $K^\times \times K^\times$ by $\Lambda$ and $\Lambda \cup 0$ by $\Lambda_0$. So the map $\av$ restricts to a bijection between $\KD$ and $\Lambda_0$, and we may think of $(\dot a, \ddot a)$ as the ``binary coordinates'' of $a \in \KD$.


Let $\lg : \VF^\times \fun \VF^\times$ be the function given by
$a \efun  \pi(\ddot a) \dot a /\ddot a$, augmented by $\lg(0) = 0$. So $\lg(A) \sub  \KD(A)$ for all $A$, in particular, $\lg(\VF) =  \KD$. Moreover,   $\lg(a) = a$ if and only if $a \in \KD$. Since $\ac(\pi(\ddot a) \dot a /\ddot a) = \ddot a \dot a / \ddot a = \ac(a)$, we have $\rv(a) = \rv(\lg(a))$. So $\lg(a)$ may be identified with $(\dot a, \ddot a)$ via the map $\av$. The unique group homomorphism $\sn : \RV \fun \VF^\times$ satisfying $\lg = \sn \circ \rv$ is a \memph{section of $\RV$} in the sense of \cite{Yin:int:expan:acvf}; here $\RV$ is taken as a definable sort. Then, to study  definable sets in light of this connection, we may follow the discussion in \cite[\S~4]{Yin:int:expan:acvf}.
\end{nota}

\begin{rem}\label{term:pole}
Since \LT~is a functional language, we may assume that every atomic formula occurring  in an $\lan{T}{K}{}$-formula $\phi(X)$ is of the form $F(X) = 0$ or $F(X) \leq 0$ or $K(F(X))$, and refer to $F(X)$ as a \memph{top term} of $\phi$. Indeed, replacing  $K(F(X))$ with $\ac(F(X)) = F(X)$, we may even assume that the predicate $K$ does not occur in $\psi$.
\end{rem}

\begin{lem}\label{fiber:TG}
Let $A \sub \VF^n$ be a definable set. Then there exist a definable function $p : \VF^n \fun \KD^m$ and an $\lan{T\Gamma}{}{}$-formula $\phi(X, Y)$ such that, for each $a \in \KD^m$,
\begin{itemize}
 \item $p^{-1}(a) \sub A$ or $p^{-1}(a) \cap A = \0$,
 \item $p^{-1}(a)$ is defined by the formula $\phi(X, a)$.
\end{itemize}
\end{lem}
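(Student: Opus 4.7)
The plan is to derive the result from quantifier elimination (Theorem~\ref{kqe}) by systematically extracting the non-$\lan{T\Gamma}{}{}$ structure of a defining formula for $A$ into the $\KD$-valued function $p$. By Theorem~\ref{kqe} together with Remark~\ref{term:pole}, $A$ is defined by a quantifier-free $\lan{T}{K}{}$-formula $\psi(X)$ whose atomic formulas are of the form $F(X) = 0$ or $F(X) \leq 0$ and in which the predicate $K$ does not occur. The function symbols of $\lan{T}{K}{}$ absent from $\lan{T\Gamma}{}{}$ are exactly $\ac,\pi,\exp,\log$, and these are what we must capture.

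First I would enumerate the subterms of $\psi$ whose outermost function symbol is one of $\ac,\pi,\exp,\log$ as $s_1,\ldots,s_k$, ordered so that inner subterms precede outer ones. Each $s_i$ is $\KD$-valued since the images of $\ac,\exp,\log$ lie in $K$ and the image of $\pi$ lies in $\Delta$. For each $s_i = f_i(t_i)$ I need the constraint $s_i(X) = Y_i$ to be expressible as an $\lan{T\Gamma}{}{}$-condition on $X$ with $\KD$-parameters. The identity $\vv\circ\pi = \id$ handles $f_i = \pi$ via $t_i(X) = \vv(Y_i)$. For $f_i \in \{\exp,\log\}$ I augment the capture list by the corresponding inverse value (still in $\KD$), reducing the constraint to an equation between $\lan{T\Gamma}{}{}$-terms. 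For $f_i = \ac$ I augment by the $\KD$-valued auxiliaries $\pi(\vv(t_i(X)))$ and $\acpi(\vv(t_i(X)))$, after which $\ac(t_i(X)) = Y_i$ becomes the $\lan{T\Gamma}{}{}$-condition
\[
\vv\bigl(t_i(X) - Y_i \cdot \pi(\vv(t_i(X)))/\acpi(\vv(t_i(X)))\bigr) > \vv(t_i(X)),
\]
with the degenerate case $t_i(X) = 0$ recorded separately. Because inner captures are processed first, by the time we reach $s_i$ its argument $t_i(X)$ has already been rewritten as an $\lan{T\Gamma}{}{}$-term in $X$ and the earlier $Y$'s.

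To make the fibers of $p$ fine enough for the $A$-dichotomy, I would also include the coordinates $\lg(X_1),\ldots,\lg(X_n)$ in the capture list; these are in $\KD$, and the constraint $\lg(X_j) = Y$ translates to the $\lan{T\Gamma}{}{}$-disjunction $(Y = 0 \wedge X_j = 0) \vee (Y \neq 0 \wedge (X_j = Y \vee \vv(X_j - Y) > \vv(Y)))$. Let $p(X) \in \KD^m$ collect all captures and let $\phi(X, Y)$ be the conjunction of the $\lan{T\Gamma}{}{}$ translations of all capture constraints; then $\phi(X, a)$ defines $p^{-1}(a)$ for every $a \in \KD^m$ by construction, settling the second bullet.

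The main obstacle is the first bullet: each fiber must be contained in $A$ or disjoint from $A$. Let $\phi_0(X, Y)$ be the $\lan{T\Gamma}{}{}$-formula obtained from $\psi$ by replacing each $s_i(X)$ with $Y_i$, so that $X \in A \iff \phi_0(X, p(X))$. The claim is that $\phi_0(X, a)$ has constant truth on $p^{-1}(a)$ for every $a$. The $\lg$-captures force $X$ onto a product of $\rv$-classes, on which the $\lan{T}{}{}$-subterms appearing in $\phi_0$ are $\rv$-contractible off an exceptional set of strictly smaller $\rv$-dimension, by the same ingredient \cite[Lemma~3.28]{Yin:tcon:I} used in the proof of Proposition~\ref{Kstab} (together with \cite[Proposition~5.8]{Dries:tcon:97} to pin down the $\vv$-values of the contractions). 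This $\rv$-constancy forces both the equalities $F(X) = 0$ and the sign conditions $F(X) \leq 0$ in $\phi_0$ to be uniform on the generic part of each fiber, yielding the dichotomy there. The remaining exceptional set, of strictly smaller $\rv$-dimension, is absorbed by induction on $n$ (or on $\rv$-dimension), by applying the inductive instance of the lemma to its lower-dimensional pieces and folding the resulting extra coordinates into $p$.
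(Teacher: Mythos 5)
The capture-and-substitute half of your argument is sound and is essentially the paper's own proof, which organizes the same syntactical manipulation as an induction on the nesting depth of the exotic function symbols and rests on the same key observation, namely that a level set of $\lg\circ F$ (equivalently of $\rv\circ F$) for an $\lan{T}{}{}$-term $F$ is $\lan{T\Gamma}{}{}$-definable from its $\KD$-value. Two local slips in your version: you must also capture $\vv$-terms, since in the one-sorted $\lan{T}{K}{}$ the symbol $\vv$ is $K$-valued and may occur inside an $\lan{T}{}{}$-term (e.g.\ $X_1+\vv(X_2)$), which has no $\lan{T\Gamma}{}{}$ counterpart because there $\vv$ lands in the separate sort $\Gamma$; and for $\zeta\in\{\pi,\exp,\log\}$ the fibre over $0$, i.e.\ the locus where the argument falls outside the set on which $\zeta$ is nontrivial, is not an $\lan{T\Gamma}{}{}$-condition, so it too must be cut out by further captured data rather than merely ``recorded separately''.

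The genuine gap is your treatment of the first bullet. The claim that $\phi_0(X,a)$ has constant truth value on $p^{-1}(a)$ is false for the $p$ you have built: the fibres carved out by the $\lg$-captures are $\RV$-polydiscs, and the zero set of an $\lan{T}{}{}$-term is almost never a union of such (take $F(X)=X_1-1-t$ with $t\in\MM$ nonzero). The proposed repair via \cite[Lemma~3.28]{Yin:tcon:I}, exceptional sets, and an induction on dimension is both unnecessary and not actually carried out: the exceptional set depends on the terms, modifying $p$ on it alters the fibration globally, and ``folding the resulting extra coordinates into $p$'' is precisely the point that requires proof, namely that the refined fibres remain uniformly defined by a single $\lan{T\Gamma}{}{}$-formula. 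The correct move is purely syntactical: once every exotic subterm has been replaced by a parameter, $\phi_0(X,Y)$ is itself an $\lan{T\Gamma}{}{}$-formula, so you may adjoin to $p$ one more $\KD$-valued coordinate $\epsilon(x)\in\{0,1\}$ recording the truth value of $\phi_0(x,p_0(x))$ (equivalently, of $x\in A$), and take the defining formula of the refined fibre to be $\phi(X,Y)\wedge\phi_0(X,Y)$ when $\epsilon=1$ and $\phi(X,Y)\wedge\neg\phi_0(X,Y)$ when $\epsilon=0$. This is exactly what the paper's induction delivers for free: the inductive hypothesis applied to the set $A'$ defined by the stripped formula already satisfies the dichotomy, and the fibres of $x\efun(p'(x,y_x),y_x)$ inherit it for $A$; no $\rv$-contractibility or dimension induction is needed.
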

\begin{proof}
This is just syntactical manipulation. Let $\mu$ be the complexity function that indicates the highest nesting level of  the five function symbols  $\vv$, $\ac$, $\pi$, $\exp$, $\log$ in a formula or a term. Let $\psi(X)$ be a quantifier-free formula that defines $A$. We assume that the predicate $K$ does not occur in $\psi$ and then proceed by induction on $\mu(\psi)$. The base case $\mu(\psi) = 0$ is rather trivial, since $\psi$ is already an \LT-formula.

For the inductive step, let $\zeta(F_i(X))$ enumerate the terms that occur in $\psi$ with $\zeta$ one of the function symbols $\exp$, $\log$, $\vv$, $\ac$, $\pi$ and $F_i$ an \LT-term; so $\mu(\zeta(F_i(X))) = 1$.  Let  $\psi'(X, Y)$ be the formula obtained from $\psi$ by replacing each $\zeta(F_i(X))$ with a new variable $Y_i$ and $A' \sub \VF^{n + l}$ the set defined by it. So $\mu(\psi') < \mu(\psi)$. By the inductive hypothesis, there are a definable function $p' : \VF^{n + l} \fun \KD^{m'}$ and an $\lan{T\Gamma}{}{}$-formula $\phi'(X, Y, Z)$ such that the claim holds with respect to $A'$. Since, for every $a \in \KD$, $(\lg \circ F_i)^{-1}(a) = (\rv \circ F_i)^{-1}(\rv(a))$, we see that the set $(\lg \circ F_i)^{-1}(a) \sub \VF^n$ is $a$-$\lan{T\Gamma}{}{}$-definable, say, by an $\lan{T\Gamma}{}{}$-formula $\phi_i''(X, a)$. For each $x \in \VF^n$, write the tuple $(\zeta(F_i(x)))_i$ as $y_x$. Let $p : \VF^n \fun \KD^{m' + l}$ be the function given by $x \efun  (p'(x, y_x), y_x)$ and $\phi(X, Y, Z)$ the formula $\phi'(X, Y, Z) \wedge \bigwedge_i \phi_i''(X, Y_i)$. These are as desired.
\end{proof}

Observe from the proof  that the codomain of the function $p$ may be further restricted to $K \cup \Delta$.

\begin{defn}\label{rem:fib}
The function $\bar p \coloneqq \av \circ p : A  \fun K^{2m}$ is referred to as a \memph{$K$-partition} of $A$; note that the $\lan{T\Gamma}{}{}$-formula $\phi(X, Y)$ comes as a part of the data. The set $\bar p(A)$ is bijective to $p(A)$ via the map $\av$ and, by Proposition~\ref{Kstab}, is $\lan{}{exp}{}$-definable.

The \memph{dimension} $\dim(\bar p)$ of  $\bar p$ is the maximum of the fiber dimensions $\dim_{\VF}(\bar p^{-1}(\dot b, \ddot b))$, $b \in p(A)$, where the dimension operator $\dim_{\VF}$ is the one in \cite[Definition~2.25]{Yin:tcon:I}.
\end{defn}

So every definable set admits a $K$-partition.

\begin{lem}
Let $\bar q$ be another $K$-partition of $A$. Then $\dim(\bar p) = \dim(\bar q)$.
\end{lem}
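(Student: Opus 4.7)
The plan is to compare both partitions to a common refinement. Let $p : \VF^n \fun \KD^m$ and $q : \VF^n \fun \KD^{m'}$ be the definable functions underlying $\bar p$ and $\bar q$, with accompanying $\lan{T\Gamma}{}{}$-formulas $\phi_p(X,Y)$ and $\phi_q(X,Z)$ as furnished by Lemma~\ref{fiber:TG}. First I would form the product $r \coloneqq (p, q) : \VF^n \fun \KD^{m + m'}$ and note that it is itself a $K$-partition of $A$: every fibre $r^{-1}(b, c) = p^{-1}(b) \cap q^{-1}(c)$ is defined by the $\lan{T\Gamma}{}{}$-formula $\phi_p(X, b) \wedge \phi_q(X, c)$ with parameter $(b,c) \in \KD^{m+m'}$, and is either contained in $A$ or disjoint from it (since each of $p^{-1}(b)$ and $q^{-1}(c)$ already enjoys that property). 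Write $\bar r = \av \circ r$.

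Next, the key step is to show
\[
\dim_{\VF}(p^{-1}(b)) = \max\set{ \dim_{\VF}(p^{-1}(b) \cap q^{-1}(c)) \given c \in q(p^{-1}(b)) }
\]
for each $b \in p(A)$, and symmetrically with $p$ and $q$ swapped. The map $q \rest p^{-1}(b)$ is a definable function from a $\VF$-set into $\KD$, which via the bijection $\av : \KD \fun \Lambda_0$ may be viewed as landing in the $K$-sort. The behaviour of $\dim_{\VF}$ under partitioning a $\VF$-sort definable set by the fibres of a $K$-valued definable function is recorded in the dimension theory developed in \cite[\S~2]{Yin:tcon:I}: since the residue/value field sort $K$ is ``orthogonal'' to $\VF$ in the sense captured by that dimension operator (each fibre of such a $K$-valued function accounts for the full $\VF$-dimension at some point), the $\VF$-dimension of the source equals the maximum of the $\VF$-dimensions of the fibres. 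Combined with Lemma~\ref{fiber:TG} and Corollary~\ref{fun:yerm} (which let us realise $q \rest p^{-1}(b)$ piecewise by terms), this gives the displayed equality.

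Finally, I would chain the two displayed equalities:
\[
\dim(\bar p) = \max_{b} \dim_{\VF}(p^{-1}(b)) = \max_{b, c} \dim_{\VF}(p^{-1}(b) \cap q^{-1}(c)) = \dim(\bar r),
\]
and symmetrically $\dim(\bar q) = \dim(\bar r)$, so $\dim(\bar p) = \dim(\bar q)$. The hard part is the middle step: checking that a $K$-sort valued definable map on a $\VF$-sort definable set has the ``max fibre dimension equals source dimension'' property for the dimension operator $\dim_{\VF}$ of \cite[Definition~2.25]{Yin:tcon:I}. Everything else is formal bookkeeping with the defining formulas $\phi_p$ and $\phi_q$.
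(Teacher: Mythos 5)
Your proposal is correct and follows essentially the same route as the paper: form the common refinement $\bar r = (\bar p, \bar q)$ and reduce to showing that refining a fibre $\bar p^{-1}(b)$ by a $K$-valued definable map does not lower the maximal fibre $\VF$-dimension. The paper justifies that key step exactly as you indicate, by noting that $\dim_{\VF}$ of an $\lan{T\Gamma}{}{}$-definable set equals the maximum algebraic dimension of its elements and that $K$ is small (citing the proofs of \cite[Lemmas~4.4, 4.5]{Yin:int:expan:acvf}), so no gap remains.
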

\begin{proof}
Since the $\VF$-dimension of an $\lan{T\Gamma}{}{}$-definable set may be equivalently defined as the maximum of the algebraic dimensions of its elements (see the discussion after \cite[Definition~2.25]{Yin:tcon:I}), the proofs of \cite[Lemmas~4.4, 4.5]{Yin:int:expan:acvf} work in the present context almost verbatim. We shall be brief. Let $\bar r$ be the $K$-partition of $A$ given by $a \efun (\bar p(a), \bar q(a))$. Since $K$ is a small set and cannot alter the $\VF$-dimension of an $\lan{T\Gamma}{}{}$-definable set, we have, for each $b \in \bar p(A)$, $\dim_{\VF}(\bar p^{-1}(b)) = \dim(\bar r \rest \bar p^{-1}(b))$, and hence $\dim(\bar p) = \dim(\bar r)$. Similarly  $\dim(\bar q) = \dim(\bar r)$. The lemma follows.
\end{proof}

\begin{rem}\label{dim:well}
So  the \memph{dimension} $\dim(A)$ of a definable set $A$ may be defined as the dimension of any $K$-partition of $A$. Alternatively, it may also be defined as the smallest number $k$ such that there is a definable injection $A \fun \VF^k \times K^l$ (similar to \cite[Lemma~2.26]{Yin:tcon:I}). In particular, $A$ is definably bijective to a set in $K$ if and only if $\dim(A) = 0$. In that case, in light of Proposition~\ref{Kstab}, we may speak of the \memph{\omin-minimal dimension} of $A$, which is denoted by $\dim_K(A)$.
\end{rem}

\section{Special bijections}

We aim to associate a universal additive invariant with $\TKVF$ by combining the constructions in  \cite{Yin:tcon:I, Yin:int:expan:acvf}. In this section we first fuse together the discussions thereof on special bijections. The modifications are local and minute, but the precision afforded by repetition outweighs the pleasure of being concise.

From here on let $\bb S$ be a fixed small substructure of $\bb U$, which is a $\TKVF$-model and is  regarded as a part of the language; so ``$\0$-definable'' or ``definable'' only means $\bb S$-definable. See the beginning of \cite[\S~3]{Yin:tcon:I} for an explanation for doing so.

For convenience, we will speak of $\lan{T}{RV}{}$-definable sets as if $\bb U$ is an $\lan{T}{RV}{}$-structure, even though $\RV$ is only a definable sort. This makes citing definitions and technical results in  \cite{Yin:tcon:I} less cumbersome. In general this involves the translation procedure
\[
\sn(A) = \set{(a, \sn(t)) : (a, t) \in A} \sub \VF^n \times \KD^m
\]
for $A \sub \VF^n \times \RV^m$, where $\sn$ is the section of $\RV$ described in Notation~\ref{bi:cor}. For instance, a set $A \sub \VF^n$ is an \memph{$\RV$-polydisc} if it is of the form $\sn(\gp)$ for some  $\RV$-polydisc $\gp \sub \VF^m \times \RV^{n-m}$ in the sense of \cite[Defintion~2.32]{Yin:tcon:I}, and an  \memph{$\RV$-pullback} if, for all $t \in \rv(A)$, $\rv^{-1}(t) \cap A$ is an $\RV$-polydisc. Note that, unlike in $\TCVF$-models, a set of the form $\rv^{-1}(t)$ contains more than one $\RV$-polydisc.  Then the definition of the  \memph{$\RV$-hull} $\RVH(A)$ of $A$  has to be modified: it is the intersection of all the $\RV$-pullbacks that contain $A$, which is an $\RV$-pullback. So the $\RV$-hull of an $\RV$-pullback is just itself, in particular, the $\RV$-hull of a set in $\KD$ is just itself.

\begin{conv}\label{conv:can}
Since we no longer have the (actual) $\RV$-sort at our disposal, \cite[Convention~5.1]{Yin:tcon:I} needs to be adjusted. For any set $A$, the \memph{regularization} of $A \sub \VF^n$ is now defined as
\[
\can : A \fun \can(A) = \set{(a, \lg(a)) \given a \in A } \sub \VF^n \times \KD^{n}.
\]
We shall tacitly substitute $\can(A)$ for $A$ in the discussion, and whether this substitution has been performed or not should be clear in the context.
\end{conv}


A special bijection as defined in~\cite[Defintion~5.2]{Yin:tcon:I} shall be referred to as an \memph{$\lan{T}{RV}{}$-definable special bijection}.

\begin{defn}
A definable bijection $T : A \fun A^{\sharp}$ is a \memph{special bijection on $A$ of length $1$} if for each $\RV$-polydisc $\gp$ of the form $\gp' \times a \sub \RVH(A)$ with $\gp' = \rv^{-1}(t)$ for some $t \in \RV$ and $a \in \KD$,   there is a $\KD(\gp)$-$\lan{T}{RV}{}$-definable special bijection  $T_{\gp} : \gp' \times a \fun \gp'^\sharp \times a$ of length at most $1$ (regularization is implemented as prescribed by Convention~\ref{conv:can}) such that
\[
T \rest (\gp \cap A) = T_{\gp} \rest (\gp \cap A);
\]
note that all such special bijections $T_{\gp}$ of length $1$ are required to target the same coordinate in $\gp'$, say, the first one. The set $C \sub \RVH(A)$ that contains exactly those $\RV$-polydiscs $\gp$ such that $T_{\gp}$ is of length $1$ is called the \memph{locus} of $T$. For each $\RV$-polydisc $\gp \sub C$, let $\lambda_{\gp} : \gp_{> 1} \fun \gp_1$ be the focus map of $T_{\gp}$. Then $\lambda = \bigcup_{\gp} \lambda_{\gp} \rest (\gp \cap A)_{> 1}$ is  the \memph{focus map} of $T$.

A \memph{special bijection $T$ on $A$ of length $n$} is a composition of $n$ special bijections $T_i$ of length $1$. Each $T_i$ is referred to as a \memph{component} of $T$.
\end{defn}

It might seem that applying $\lan{T}{RV}{}$-definable special bijections fiberwise over $K$-partitions is a more direct route. However, in general, $K$-partitions cannot be achieved through regularization alone and, consequently, they have no counterpart in the dual notion of a blowup, which is an essential ingredient in the computation of the kernel of the lifting homomorphism (see \S~\ref{sect:uai} below). Therefore, proactively, $K$-partitions or any other similar operations cannot be allowed in special bijections.

\begin{rem}\label{spec:rest:acvf}
Let $T : A \fun A^{\sharp}$ be a special bijection with components $T_i$. Observe that if $A$ is an $\RV$-pullback then $A^{\sharp}$ is an $\RV$-pullback. By definition, each $T_i$ is a restriction of a special bijection $R_i$ on $\RVH(\dom(T_i))$ and hence their composition $R$ is a special bijection $\RVH(A) \fun \RVH(A^\sharp)$. It follows that, for any $\RV$-polydisc $\gp \sub \RVH(A^{\sharp})$ and any $\KD(\gp)$-$\lan{T}{RV}{}$-definable set $B \sub \gp$, the set $R^{-1}(B) \sub \RVH(A)$ is contained in an $\RV$-polydisc and is indeed $\KD(\gp)$-$\lan{T}{RV}{}$-definable, and $R \rest R^{-1}(B)$ is a $\KD(\gp)$-$\lan{T}{RV}{}$-definable special bijection.
\end{rem}

For the remainder of this section, let $\mu$ be the syntactical complexity function for the five function symbols  $\vv$, $\ac$, $\pi$, $\exp$, and $\log$, as described in the proof of Lemma~\ref{fiber:TG}. Let $\zeta$ stand for any one of these five function symbols. For any quantifier-free formula or term $\phi(X)$, if $\zeta(F(X))$ occurs in $\phi$ then it is a  \memph{$\zeta$-term} of $\phi$, and a \memph{top $\zeta$-term}  is a $\zeta$-term that does not occur in the scope of another occurrence of $\zeta$ (which may or may not be a top term of $\phi$).


\begin{lem}\label{spec:bi:poly:cons:noRV}
Let $\tau(X): \VF^n \fun \VF$ be a term. Let $\gp \sub \VF^n$ be an $\RV$-polydisc and $R : \gp \fun A$ a special bijection. Then there is a special bijection $T$ on $A$ such that the function $\tau \circ R^{-1} \circ T^{-1}$ is $\rv$-contractible.
\end{lem}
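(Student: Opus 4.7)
The plan is to proceed by induction on the syntactic complexity $\mu(\tau)$, where $\mu$ counts the highest nesting of the five function symbols $\vv, \ac, \pi, \exp, \log$.

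For the base case $\mu(\tau) = 0$, the term $\tau$ is pure $\lant$, so on each $\RV$-polydisc $\gp \sub \RVH(A)$ the restriction of $\tau \circ R^{-1}$ is an $\lan{T}{RV}{}$-definable function over $\KD(\gp)$ (cf.\ Remark~\ref{spec:rest:acvf}). I would then apply the $\lan{T}{RV}{}$-version of the present lemma --- essentially \cite[Lemma~3.28]{Yin:tcon:I} followed by the standard special-bijection decomposition of its low-dimensional exceptional set --- fiberwise over the $\RV$-polydiscs of $\RVH(A)$. The outputs can be assembled into a single new-style special bijection $T$ because the definition of a length-$1$ special bijection permits this, provided we arrange (after a harmless permutation of coordinates in $\VF^n$) that all the fiberwise $T_{\gp}$ target the same coordinate.

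For the inductive step, enumerate the top $\zeta$-terms of $\tau$ as $\zeta_1(F_1(X)), \ldots, \zeta_k(F_k(X))$, so $\mu(F_j) < \mu(\tau)$. I would apply the inductive hypothesis successively to the pairs $(F_1, R)$, $(F_2, T_1 \circ R)$, \ldots, composing the resulting special bijections at each step, to obtain a single special bijection $T' \colon A \fun A'$ such that each $\widetilde F_j \coloneqq F_j \circ R^{-1} \circ T'^{-1}$ is $\rv$-contractible on $A'$. A key point used here is that applying a further special bijection to an already $\rv$-contractible function preserves $\rv$-contractibility, because a special bijection only subdivides $\RV$-polydiscs further. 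Next I would observe that on each $\RV$-polydisc $\gq \sub A'$ the function $\zeta_j(\widetilde F_j)$ is not merely $\rv$-contractible but genuinely constant: for $\zeta_j \in \{\vv, \ac\}$ this follows because $\vv$ and $\ac$ factor through $\rv$; for $\zeta_j \in \{\pi, \exp, \log\}$ this uses that these symbols are trivial on $\VF \mi K$ and that $K^\times$ meets each $\rv$-class in at most one point, so an $\rv$-contractible function into $K$ is constant on $\gq$.

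Let $\tau'(X, Y_1, \ldots, Y_k)$ be the term obtained from $\tau$ by substituting a fresh variable $Y_j$ for each top $\zeta_j(F_j(X))$; since all top $\zeta$-terms have thereby been erased, $\mu(\tau') = 0$, i.e.\ $\tau'$ is an $\lant$-term. By construction, for every $a \in A'$,
\[
(\tau \circ R^{-1} \circ T'^{-1})(a) = \tau'\bigl( (R^{-1} \circ T'^{-1})(a),\, \zeta_1(\widetilde F_1)(a), \ldots, \zeta_k(\widetilde F_k)(a) \bigr).
\]
Running the base case in parametric form on $A'$ --- $\RV$-polydisc by $\RV$-polydisc, the right-hand side is an $\lan{T}{RV}{}$-definable function of $a$ over $\KD$-parameters, and the construction applies uniformly --- produces a further special bijection $T''$ on $A'$, and $T \coloneqq T'' \circ T'$ is the desired special bijection. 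The main obstacle I expect is precisely this final parametric step: it requires that the construction underlying \cite[Lemma~3.28]{Yin:tcon:I} go through uniformly with the $\RV$-polydisc-dependent parameters $\zeta_j(\widetilde F_j)$ fed in, which forces one either to invoke a parametric version of that lemma or to re-derive it in the fibered form dictated by the present notion of special bijection. Once that bookkeeping is set up, the induction closes cleanly.
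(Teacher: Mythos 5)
Your overall strategy --- stratify $\tau$ by the nesting depth of the five function symbols $\vv, \ac, \pi, \exp, \log$, use the $\lan{T}{RV}{}$-level result to make the arguments of the top $\zeta$-terms $\rv$-contractible polydisc by polydisc, substitute constants for those terms, and recurse --- is the same as the paper's (the paper invokes \cite[Theorem~5.5]{Yin:tcon:I} directly for the $\lan{T}{RV}{}$ step rather than rebuilding it from \cite[Lemma~3.28]{Yin:tcon:I}, but that is immaterial). However, there is one genuine gap, at the step where you assert that $\zeta_j(\widetilde F_j)$ is \emph{constant} on each $\RV$-polydisc $\gq$ once $\widetilde F_j$ is $\rv$-contractible. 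For $\zeta_j \in \{\vv, \ac\}$ this is correct, since these factor through $\rv$. For $\zeta_j \in \{\pi, \exp, \log\}$ your justification proves the opposite of what you need: precisely because these symbols are trivial off $K$ and $K^\times$ meets an $\rv$-class in at most one point, the composite $\zeta_j \circ \widetilde F_j$ can take \emph{two} values on $\gq$. Concretely, if $B = \widetilde F_j(\gq)$ lies in an $\rv$-class of value $\pm 1$, that class contains the single $K$-point $\lg(B)$, and $\zeta_j(B)$ is the two-element set $\{\zeta_j(\lg(B)), 0\}$ unless either $B = \{\lg(B)\}$ or $\lg(B) \notin B$. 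Neither alternative is automatic, and the set $\widetilde F_j^{-1}(\lg(B)) \cap \gq$ is not a union of $\RV$-polydiscs, so this dichotomy cannot be pushed into your substitution of the constants $Y_j$: on such a $\gq$ there is no single value of $Y_j$ to substitute.

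The paper closes exactly this gap by a further refinement of the special bijection before substituting: the set $(T' \circ R \circ F_j^{-1})(\lg(B)) \sub \gq$ is $\KD(\gq)$-$\lan{T}{RV}{}$-definable by Remark~\ref{spec:rest:acvf}, so by \cite[Corollary~5.6]{Yin:tcon:I} and compactness one may lengthen $T'$ until, on every resulting polydisc, either $B = \{\lg(B)\}$ or $\lg(B) \notin B$; only then is $\zeta_j(B)$ a singleton in $\KD(\gq)$. With that patch inserted your induction closes: the remaining worry you raise about the parametric rerun of the base case is handled exactly as you suggest, by working over the parameters in $\KD(\gq)$ on each polydisc separately and gluing by compactness.
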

\begin{proof}
First observe that if the assertion holds for one such term $\tau$ then it holds simultaneously for any finite number of them. Let $\zeta(F_{ki}(X))$ enumerate the $\zeta$-terms of $\tau$ such that $\mu(F_{ki}(X)) = k$; so each $F_{0i}(X)$ is just an \LT-term. By compactness, we may  concentrate on a single $\RV$-polydisc $\gp_0 \sub A$. By Remark~\ref{spec:rest:acvf}, $R \rest R^{-1}(\gp_0)$ is  $\KD(\gp_0)$-$\lan{T}{RV}{}$-definable. By \cite[Theorem~5.5]{Yin:tcon:I}, there exists a $\KD(\gp_0)$-$\lan{T}{RV}{}$-definable special bijection $T_0 : \gp_0 \fun \gp^\sharp_0$ such that, for every $\RV$-polydisc $\gq \sub \gp^\sharp_0$, $\rv(B^{\gq}_{0i})$ is a singleton for all $i$, where
\[
B^{\gq}_{0i} =  (F_{0i} \circ R^{-1} \circ T_0^{-1})(\gq) \sub \VF.
\]
If $\zeta$ is $\pi$ or $\exp$ or $\log$ and $\vv(B^{\gq}_{0i}) = \pm 1$ then $\zeta(B^{\gq}_{0i})$ could be the two-element set $\{\zeta(\lg(B^{\gq}_{0i})), 0\}$. By  Remark~\ref{spec:rest:acvf}, $(T_0 \circ R \circ F_{0i}^{-1})(\lg(B^{\gq}_{0i})) \sub \gp^\sharp_0$ is $\KD(\gq)$-$\lan{T}{RV}{}$-definable and hence, by \cite[Corollary~5.6]{Yin:tcon:I} and compactness, we may extend $T_0$ so that either $B^{\gq}_{0i} = \{\lg(B^{\gq}_{0i})\}$ or $\lg(B^{\gq}_{0i}) \notin B^{\gq}_{0i}$. Consequently,  $\zeta(B^{\gq}_{0i})$ is a singleton $\{a^{\gq}_{0i}\} \sub \KD(\gq)$ for all the five possibilities of $\zeta$.

Next, we concentrate on a single $\RV$-polydisc $\gp_1 \sub T_0(\gp_0)$. Let $F^{\gp_1}_{1i}(X)$ be the $\lan{T}{}{}$-term obtained from $F_{1i}(X)$ by replacing each $\zeta(F_{0i}(X))$ with $a^{\gp_1}_{0i}$. As above, there exists a $\KD(\gp_1)$-$\lan{T}{RV}{}$-definable special bijection $T_1$ on $\gp_1$ such that, for every $\RV$-polydisc $\gq \sub T_1(\gp_1)$,
\[
(\zeta \circ F^{\gp_1}_{1i} \circ R^{-1} \circ T_0^{-1} \circ T_1^{-1})(\gq)
\]
is a singleton for all $i$.

Repeating this procedure for all the $\zeta$-terms $F_{ki}(X)$ of higher complexities, we see that there is a special bijection $T$ on $A$ as desired.
\end{proof}

We will have a more powerful analogue of \cite[Theorem~5.5]{Yin:tcon:I} than the lemma above.

\begin{cor}\label{VF:image:RV:inj}
For any definable set $A \sub \VF^n$ there is a special bijection $T : A \fun A^\sharp$ such that $A^\sharp$ is an $\RV$-pullback.
\end{cor}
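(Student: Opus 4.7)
The plan is to quantifier-eliminate $A$ down to a finite list of top terms and then apply Lemma~\ref{spec:bi:poly:cons:noRV} to all of them simultaneously, one $\RV$-polydisc at a time, patching by compactness.

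First, by Theorem~\ref{kqe} and Remark~\ref{term:pole}, I would fix a quantifier-free $\lan{T}{K}{}$-formula $\phi(X)$ defining $A$ whose atomic subformulas are of the form $F_i(X) = 0$ or $F_i(X) \leq 0$ for finitely many $\lan{T}{K}{}$-terms $F_1, \ldots, F_k$, and in which the predicate $K$ does not occur. The $\RV$-hull $\RVH(A)$ is by construction an $\RV$-pullback, so I would work inside it and aim to produce a special bijection on $\RVH(A)$ whose restriction to $A$ does the job.

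Next, I would fix an $\RV$-polydisc $\gp \sub \RVH(A)$ and invoke the simultaneous form of Lemma~\ref{spec:bi:poly:cons:noRV} (promised in the first sentence of its proof) on the list $\tau = F_1, \ldots, F_k$ with $R = \id_\gp$. This produces a $\KD(\gp)$-$\lan{T}{RV}{}$-definable special bijection $T_\gp : \gp \fun \gp^\sharp$ such that each composite $F_i \circ T_\gp^{-1}$ is $\rv$-contractible; equivalently, on every $\RV$-polydisc $\gq \sub \gp^\sharp$ the value $\rv(F_i(T_\gp^{-1}(x)))$ is constant as $x$ varies over $\gq$.

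From there I would argue that every atomic subformula, and hence $\phi$ itself, has constant truth value on each such $\gq$. Because $\vv$ is signed, constancy of $\rv(F_i)$ on $T_\gp^{-1}(\gq)$ pins down both the vanishing and the sign of $F_i$ there; the potential ambiguity between the value $0$ and nonzero values of the same $\rv$-class is precisely what the second half of the proof of Lemma~\ref{spec:bi:poly:cons:noRV} already eliminates (either $B^\gq_i$ is a single $\KD$-point or $\lg(B^\gq_i) \notin B^\gq_i$). Consequently $T_\gp(A \cap \gp)$ is a union of full $\RV$-polydiscs in $\gp^\sharp$, i.e.\ an $\RV$-pullback. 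Since the construction of $T_\gp$ is uniform in $\gp$, compactness allows me to glue the $T_\gp$ into a single special bijection $T$ on $\RVH(A)$, whose restriction to $A$ is the desired $T : A \fun A^\sharp$ with $A^\sharp$ an $\RV$-pullback. The one delicate point I expect is the vanishing/sign verification at the atomic level; everything else is bookkeeping on top of Lemma~\ref{spec:bi:poly:cons:noRV} and compactness.
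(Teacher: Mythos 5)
Your proposal is correct and is essentially the paper's own argument: both fix a quantifier-free defining formula whose atomic subformulas are $F_i(X)=0$ or $F_i(X)\leq 0$ (via Remark~\ref{term:pole}) and use Lemma~\ref{spec:bi:poly:cons:noRV}, polydisc by polydisc with compactness, to render the relevant terms $\rv$-contractible, so that membership in $A$ becomes constant on each $\RV$-polydisc of the image. The only organizational difference is that you apply the lemma once to the full top terms and argue directly that constancy of $\rv(F_i)$ determines vanishing and sign (which follows because the valuation is signed and $0$ is its own $\rv$-class --- not really the issue addressed in the second half of that lemma's proof, which concerns $\pi$, $\exp$, $\log$ being two-valued on discs straddling $K$), whereas the paper applies the lemma in two passes, first to the top $\zeta$-terms so as to reduce each polydisc to an \LT-formula, and then to the top \LT-terms of its $\lan{T}{RV}{}$-translation.
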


In particular, if $\dim(A) = 0$ then $A^\sharp$ is a set in $\KD$; of course, as we have pointed out above, $A$ is definably bijective to a set in $K$ or, equivalently, $\KD$ if and only if $\dim(A) = 0$, so the point is just that we can choose a special bijection as a witness.

\begin{proof}
Let $\phi(X)$ be a quantifier-free formula that defines $A$. Let $\tau_i(X)$ enumerate the  top $\zeta$-terms of $\phi$. By Lemma~\ref{spec:bi:poly:cons:noRV}, there exists a special bijection $T : \VF^n \fun C$ such that every function $\tau_i \circ T^{-1}$ is $\rv$-contractible. This means that $(\tau_i \circ T^{-1})(\gp)$ is a singleton $\{a^{\gp}_{i}\} \sub \KD(\gp)$ for every $\RV$-polydisc $\gp \sub C$. Let $\phi^{\gp}(X)$ be the formula obtained from $\phi$ by replacing each $\tau_{i}(X)$ with $a^{\gp}_{i}$ and $A^{\gp}$ the set defined by $\phi^{\gp}(X)$. It follows that
\[
T^{-1}(\gp) \cap A = T^{-1}(\gp) \cap A^{\gp}.
\]
Since we may assume that the predicate $K$ does not occur in $\phi$ (Remark~\ref{term:pole}), $\phi^{\gp}(X)$ is an \LT-formula, which can then be transformed into an $\lan{T}{RV}{}$-formula $\phi^{\gp}_*(X)$ as described in \cite[Convention~3.1]{Yin:tcon:I}, that is, every \LT-term occurs in the scope of an instance of $\rv$. Let $\sigma_j(X)$ enumerate the top \LT-terms of $\phi^{\gp}_*(X)$. Applying Lemma~\ref{spec:bi:poly:cons:noRV} again, we may assume that every function $\sigma_j \circ T^{-1}$ is $\rv$-contractible as well. Thus, either $T^{-1}(\gp) \sub A^{\gp}$ or $T^{-1}(\gp) \cap A^{\gp} = \0$. The claim follows.
\end{proof}

\begin{lem}\label{two:spec:ann}
Let $T$, $R$ be special bijections on an $\RV$-disc $\gp \sub \VF$. Let $R_i$ enumerate the components of $R$, $\hat R_i = R_i \circ \cdots \circ R_1$, $\lambda_i$ the focus map of $R_i$, and $A_i \sub  \gp$ the image of (the graph of) $\lambda_i$ under $\hat R^{-1}_i$. If $T(\gp) \mi T ( A_i )$ is an $\RV$-pullback for every $i$ then $R \circ T^{-1}$ is $\rv$-contractible.
\end{lem}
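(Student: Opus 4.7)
The plan is to proceed by induction on the length $n$ of $R$. When $n = 0$, $R$ is the identity, so $R \circ T^{-1} = T^{-1}$; since $\gp$ is a single $\RV$-disc on which $\rv$ is constant, $T^{-1}$ is trivially $\rv$-contractible and the hypothesis is vacuous.

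For the inductive step, write $R = R_n \circ R'$ with $R' = \hat R_{n-1}$. Since $T(\gp) \mi T(A_i)$ is an $\RV$-pullback for every $i \leq n-1$, the inductive hypothesis applied to the pair $(T, R')$ yields that $R' \circ T^{-1}$ is $\rv$-contractible.

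Fix an arbitrary $\RV$-polydisc $\gq \sub T(\gp)$. Because $T(\gp) \mi T(A_n)$ is an $\RV$-pullback, its complement $T(A_n)$ within $T(\gp)$ is also a union of $\RV$-polydiscs; hence $\gq$ lies entirely in one of these two pieces. If $\gq \sub T(\gp) \mi T(A_n)$, then $T^{-1}(\gq) \sub \gp \mi A_n$, so $R'(T^{-1}(\gq))$ avoids the locus of $R_n$; on that region $R_n$ acts with length $0$ on each $\RV$-polydisc (i.e., as the identity), and combined with the inductive hypothesis this shows that $\rv(R \circ T^{-1}(\gq))$ is a singleton.

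The main obstacle is the remaining case $\gq \sub T(A_n)$. Here $T^{-1}(\gq) \sub A_n$, so $R'(T^{-1}(\gq))$ lies in the locus of $R_n$. By the inductive hypothesis $R'(T^{-1}(\gq))$ is contained in a single $\RV$-polydisc $\gp''$, and since the locus of $R_n$ is a union of $\RV$-polydiscs, $\gp''$ is entirely inside that locus; thus $R_n$ restricted to $\gp''$ is a $\KD(\gp'')$-$\lan{T}{RV}{}$-definable special bijection of length $1$ whose focus map is (a restriction of) $\lambda_n$. Now $\hat R_n(T^{-1}(\gq))$ is forced to lie on the graph of $\lambda_n$, and on this graph the new coordinate introduced by $R_n$ has a fixed $\rv$ by the very design of a length-$1$ special bijection, while the remaining coordinates retain their constant $\rv$ from the inductive step. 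Assembling these pieces pins $\rv(R \circ T^{-1}(\gq))$ down to a single value. The technical heart is to extract this last step cleanly from the $\lan{T}{RV}{}$-definable structure of $R_n$, which can be done along the lines of the analogous computation in the pure $\lan{T}{RV}{}$-setting of \cite[\S~5]{Yin:tcon:I}.
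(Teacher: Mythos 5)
Your overall strategy (induction on the length of $R$, with a case split according to the last component $R_n$) differs from the paper's argument, which is a short proof by contradiction at the first index where contractibility fails; but as written your central step has a genuine gap. In Case 1 you assert that $T^{-1}(\gq) \cap A_n = \0$ implies that $R'(T^{-1}(\gq))$ avoids the \emph{locus} of $R_n$, so that $R_n$ acts there as the identity. That is not what $A_n$ is: $A_n$ is the pullback of the graph of the \emph{focus map} $\lambda_n$, not of the locus of $R_n$. On the locus away from the focus graph, $R_n$ is not the identity; it acts by $x \efun x - \lambda_n(\cdots)$ together with the adjunction of a new $\KD$-coordinate. So the real issue, which your write-up never addresses, is why $\rv$-constancy survives this translation. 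Knowing only (from the inductive hypothesis) that $\rv$ is constant on $R'(T^{-1}(\gq))$ and that this set misses the graph of $\lambda_n$ is not enough: a subset of an open disc with constant $\rv$, translated by a point of that disc not lying in the subset, need not have constant $\rv$. What is actually needed is that $R'(T^{-1}(\gq))$ is an open polydisc of the form $\ga \times a$ missing the graph of $\lambda_n$; then the translate misses $0$ and $\rv$ is constant on it. Proving this polydisc-ness takes a further argument (each earlier component translates the successive images by a single constant, because, by the lemma for shorter lengths, each intermediate image lies in one $\RV$-polydisc, on which the $\KD$-coordinates and hence the focus value are constant). This is precisely the local analysis the paper carries out at the minimal failure index, and it is the step you defer to ``the analogous computation'' in \cite[\S~5]{Yin:tcon:I} without performing it.

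A smaller point: your Case 2, which you call the main obstacle, is in fact vacuous. By the hypothesis (as explained in the remark following the lemma), $T(A_n)$ is a set of dimension $0$ consisting of points of $\KD$, each of which is itself an $\RV$-polydisc split off from $T(\gp)$; so if $\gq \sub T(A_n)$ then $\gq$ is a single point and $\rv$-contractibility on it is trivial, making the discussion of $\hat R_n(T^{-1}(\gq))$ lying on the graph of $\lambda_n$ unnecessary. The hypothesis of the lemma is used exactly where you invoke the dichotomy (no positive-dimensional polydisc of $T(\gp)$ can meet $T(A_n)$), and that part is sound in spirit; the missing content is the Case 1 computation above.
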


Since $T(\gp)$ is itself an $\RV$-pullback, the  extra clause in the conclusion just means that  $T ( A_i )$ is also an $\RV$-pullback and indeed must be a set in $\KD$ because it is of dimension  $0$. It is not enough to require that  $T ( A_i )$ be a set in $\KD$ since, in that case, it could still be a proper subset of an $\RV$-pullback contained in $T(\gp)$. In contrast, in the $\TCVF$-model $\bb U$ different $\RV$-polydiscs are automatically pairwise disjoint.

\begin{proof}
Suppose for contradiction that $R \circ T^{- 1}$ is not $\rv$-contractible. Then there are an $\RV$-polydisc $\gq \sub  T (\gp)$ and an $i$ such that $(\rv \circ \hat R_i \circ T^{- 1})(\gq)$ is a singleton but $(\rv \circ \hat R_{i+1} \circ T^{- 1})(\gq)$ is not. Since $(\hat R_{i} \circ T^{- 1})(\gq)$ is an open polydisc of the form $\ga \times a \sub \VF \times \KD^m$, we have $\lambda_{i + 1} \cap (\hat R_i \circ T^{- 1})(\gq) \neq \0$. So $T ( A_{i + 1} ) \sub \gq$,  contradiction.
\end{proof}

Here is a minor upgrade of Lemma~\ref{spec:bi:poly:cons:noRV}:

\begin{lem}\label{spec:bi:any}
If $\tau: \VF^n \fun \VF$ is a definable function instead of a term then the conclusion of Lemma~\ref{spec:bi:poly:cons:noRV} still holds.
\end{lem}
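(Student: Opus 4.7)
The plan is to reduce to the term case, treated in Lemma~\ref{spec:bi:poly:cons:noRV}, by invoking the piecewise-term description of definable functions from Corollary~\ref{fun:yerm}.

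First, by Corollary~\ref{fun:yerm}, I would fix a finite definable partition $\VF^n = A_1 \sqcup \cdots \sqcup A_k$ together with $\lan{T}{K}{}$-terms $\tau_1, \ldots, \tau_k$ such that $\tau \rest A_j = \tau_j \rest A_j$ for every $j$. Setting $B_j = A \cap R(A_j \cap \gp)$, I would then apply a finite-simultaneous version of Corollary~\ref{VF:image:RV:inj} to the definable sets $B_1, \ldots, B_k$ to produce a special bijection $T_1 : A \fun A_1^{\sharp}$ such that each image $T_1(B_j)$ is an $\RV$-pullback. Consequently, every $\RV$-polydisc $\gq \sub A_1^{\sharp}$ lies in exactly one $T_1(B_j)$, and on $T_1^{-1}(\gq)$ the function $\tau \circ R^{-1}$ coincides with $\tau_j \circ R^{-1}$.

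Next, since $T_1 \circ R : \gp \fun A_1^{\sharp}$ is a composition of special bijections, it is itself a special bijection on the $\RV$-polydisc $\gp$. Applying the simultaneous form of Lemma~\ref{spec:bi:poly:cons:noRV} to the finitely many terms $\tau_1, \ldots, \tau_k$ with this new special bijection, I obtain a special bijection $T_2$ on $A_1^{\sharp}$ such that each composition $\tau_j \circ R^{-1} \circ T_1^{-1} \circ T_2^{-1}$ is $\rv$-contractible. Because a special bijection only subdivides existing $\RV$-polydiscs into smaller $\RV$-polydiscs, the images $T_2(T_1(B_j))$ remain $\RV$-pullbacks, so every $\RV$-polydisc $\gr \sub T_2(A_1^{\sharp})$ is contained in exactly one $T_2(T_1(B_j))$; on such a $\gr$ the function $\tau \circ R^{-1} \circ T_1^{-1} \circ T_2^{-1}$ agrees with the $\rv$-contractible $\tau_j \circ R^{-1} \circ T_1^{-1} \circ T_2^{-1}$. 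Setting $T = T_2 \circ T_1$ then yields the desired special bijection on $A$.

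The main obstacle lies in the simultaneous application of Corollary~\ref{VF:image:RV:inj} to the sets $B_j$. I expect this to follow by iterating the proof of that corollary: produce a special bijection making $B_1$ an $\RV$-pullback, then apply the corollary to the image of $B_2$, and so on, noting that further special bijections only refine existing $\RV$-pullbacks and so preserve the property already secured for $B_1, \ldots, B_{j-1}$. This is the same \emph{observation} already exploited in the proof of Lemma~\ref{spec:bi:poly:cons:noRV}, where one remarks that if the assertion holds for one term then it holds simultaneously for any finite collection.
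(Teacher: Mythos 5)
Your proposal is correct and follows essentially the same route as the paper: invoke Corollary~\ref{fun:yerm} to write $\tau$ piecewise by terms on a partition of $\gp$, use Corollary~\ref{VF:image:RV:inj} (together with the observation in Remark~\ref{spec:rest:acvf} that further special bijections preserve $\RV$-pullbacks) to refine by a special bijection so that each piece becomes an $\RV$-pullback, and then apply the simultaneous form of Lemma~\ref{spec:bi:poly:cons:noRV} to the finitely many terms. The only difference is that you spell out the bookkeeping (the simultaneous/iterated application and the persistence of the $\RV$-pullback property) that the paper leaves implicit.
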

\begin{proof}
By Corollary~\ref{fun:yerm}, the function $\tau \rest \gp$ is given piecewise by terms, say, $\tau_i(X) \rest B_i = \tau \rest B_i$, where $(B_i)_i$ is a partition of $\gp$. By Remark~\ref{spec:rest:acvf} and Corollary~\ref{VF:image:RV:inj}, there is a special bijection $T$ on $A$ such that each $(T \circ R)(B_i)$ is an $\RV$-pullback. Then,  by Lemma~\ref{spec:bi:poly:cons:noRV}, we may assume that every function $\tau_i \circ R^{-1} \circ T^{-1}$ is already $\rv$-contractible. The claim follows.
\end{proof}

We are now ready to state a better generalization of \cite[Theorem~5.5]{Yin:tcon:I}:

\begin{thm}\label{spec:bi:term:cons:disc}
Let $\tau : \VF^n \times \KD^m \fun \VF$ be a definable function, $\gp \sub \VF^n$ an $\RV$-polydisc, and $R : \gp \fun A$ a special bijection. Then there is a special bijection $T$ on $A$ such that, for every $c \in \KD^m$, the function $\tau(-,  c) \circ R^{-1}  \circ T^{-1}$ is $\rv$-contractible.
\end{thm}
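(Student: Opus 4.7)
The plan is to extend the proof of Lemma~\ref{spec:bi:any} to accommodate parameters $c \in \KD^m$ uniformly, exploiting the fact that via $\av$ the set $\KD$ is in bijection with $\Lambda_0 \sub K^2$, so parameters in $\KD^m$ can be regarded as $\RV$-parameters for the $\lan{T}{RV}{}$-definable special bijection machinery developed in \cite{Yin:tcon:I}. Consequently, the $\TCVF$ tools—most importantly \cite[Theorem~5.5]{Yin:tcon:I} and \cite[Corollary~5.6]{Yin:tcon:I}—are available with $\av(Y)$ adjoined to the parameter set, and they output constructions that are uniform in those extra parameters.

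First, I would reduce to the case of a term. By Corollary~\ref{fun:yerm} there exist a partition $(B_i)_i$ of $\VF^n \times \KD^m$ and $\lan{T}{K}{}$-terms $\tau_i(X, Y)$ with $\tau \rest B_i = \tau_i \rest B_i$. A parametric version of Corollary~\ref{VF:image:RV:inj}, obtained by treating $Y$ as an $\RV$-parameter via $\av$, then yields a special bijection on $A$ so that for every $c \in \KD^m$ each $c$-fiber of each $B_i$ becomes an $\RV$-pullback. This reduces the problem to the case where $\tau$ is itself a single term $\tau(X, Y)$. For this term case, I would imitate Lemma~\ref{spec:bi:poly:cons:noRV}: enumerate the $\zeta$-terms $\zeta(F_{ki}(X, Y))$ of $\tau$ by syntactic complexity $\mu(F_{ki}) = k$ and proceed by induction on $k$. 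At the base step each $F_{0i}$ is an \LT-term, and applying \cite[Theorem~5.5]{Yin:tcon:I} with $\av(Y)$ in the parameter set produces, on each $\RV$-polydisc $\gp_0 \sub A$, a $\KD(\gp_0)$-$\lan{T}{RV}{}$-definable special bijection $T_0$ making $\rv(F_{0i}(x, c))$ constant on each $\RV$-polydisc of $T_0(\gp_0)$ simultaneously for every $c$. The borderline cases $\vv(F_{0i}(\cdot, c)) = \pm 1$ for $\zeta \in \{\pi, \exp, \log\}$ are eliminated using \cite[Corollary~5.6]{Yin:tcon:I} just as in Lemma~\ref{spec:bi:poly:cons:noRV}. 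One then substitutes the resulting constant values $a^{\gq}_{0i}(Y)$ for $\zeta(F_{0i}(X, Y))$ inside the $F_{1i}$ and iterates through higher complexities; combining the iterates gives the desired $T$.

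The main obstacle is ensuring that $T$ is genuinely $c$-independent, while the problematic behavior of each $\zeta(F_{ki}(\cdot, c))$ does depend on $c$: the focus maps of the components of $T$ are $\KD(\gq)$-definable for each ambient $\RV$-polydisc $\gq$, yet they must simultaneously ``catch'' the problematic points of every $F_{ki}(\cdot, c)$ as $c$ varies over $\KD^m$. The resolution is that, for each $\gq$, the set of $c \in \KD^m$ for which $\rv(F_{ki}(\cdot, c))$ fails to be constant on a given sub-$\RV$-polydisc is itself $\KD(\gq)$-definable, and—because $\KD$ meets each $\rv$-class in at most a single canonical representative—the corresponding problematic values all lie in $\KD(\gq)$; a single $\KD(\gq)$-$\lan{T}{RV}{}$-definable blow-up therefore suffices to neutralize them all at once. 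This is precisely the uniformity one extracts by treating $\av(Y)$ as $\RV$-parameters, and once it is in hand the remainder of the proof is the same bookkeeping that appears in the proofs of Lemmas~\ref{spec:bi:poly:cons:noRV} and~\ref{spec:bi:any}.
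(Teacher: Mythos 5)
There is a genuine gap: your argument assumes exactly the uniformity that the theorem is asserting. The whole point of the statement (as the paper notes right after it) is that $T$ must not depend on $c \in \KD^m$; if $c$-dependence were allowed, the result would already follow from Lemma~\ref{spec:bi:any} and compactness. Your plan routes the uniformity through the claim that \cite[Theorem~5.5]{Yin:tcon:I} and \cite[Corollary~5.6]{Yin:tcon:I} remain available ``with $\av(Y)$ adjoined to the parameter set'' and then ``output constructions that are uniform in those extra parameters.'' They do not: those are $\TCVF$-statements, and in $\TCVF$ there is no section, so the family $\tau(-,c)$ with $c \in \KD^m \sub \VF^m$ is a family over \emph{$\VF$-parameters}, not an $\RV$-parametrized family that the $\TCVF$ machinery could treat orthogonally to $\VF$; applying those results at a given $c$ produces a special bijection definable over $c$, i.e.\ a $c$-dependent one, which is again just Lemma~\ref{spec:bi:any}. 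The same circularity infects your first reduction (making every $c$-fiber of every $B_i$ an $\RV$-pullback ``uniformly in $c$'' via a parametric Corollary~\ref{VF:image:RV:inj}) and your ``resolution'' paragraph: the problematic points of $F_{ki}(-,c)$ sweep out, as $c$ varies, a set that does \emph{not} lie in $\KD(\gq)$, and the remark that $\KD$ meets each $\rv$-class in one representative does not make the $c$-indexed family of focus data collapse to a single $\KD(\gq)$-definable blowup.

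What the paper actually does to obtain $c$-uniformity is different in structure. It inducts on $n$. In the base case $n=1$, one takes, for each $c$, the $c$-dependent special bijection $T_c$ from Lemma~\ref{spec:bi:any}, then forms the single definable set given by the union over all $c$ of (the graphs of) the focus maps of the components of $T_c$; this set has dimension $0$, so by Corollary~\ref{VF:image:RV:inj} there is a $c$-\emph{independent} special bijection $T$ making its image an $\RV$-pullback, and Lemma~\ref{two:spec:ann} then shows each $T_c \circ T^{-1}$ is $\rv$-contractible, which transfers contractibility of $\tau(-,c)$ to $T$. In the inductive step, the fiberwise (over the first coordinate) special bijections supplied by the inductive hypothesis are glued by applying the hypothesis again to the top terms of the formulas that define the contracted functions and the loci/focus data, so that these become constant on $\RV$-polydiscs of the first coordinate. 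Your intuition that the union of the problematic points is small (being parametrized by the $0$-dimensional set $\KD^m$) is in the right spirit of this base case, but without the detour through Corollary~\ref{VF:image:RV:inj} and Lemma~\ref{two:spec:ann} (or some substitute), and without the induction on the number of $\VF$-coordinates, the syntactic induction of Lemma~\ref{spec:bi:poly:cons:noRV} alone does not deliver a parameter-free $T$.
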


Note that the difference between this statement and Lemma~\ref{spec:bi:any} is that the special bijection $R$ here does not depend on the parameters in  $\KD^m$, otherwise it would simply be a consequence of Lemma~\ref{spec:bi:any} and compactness.


\begin{proof}
As Lemma~\ref{spec:bi:any}, if the assertion holds for one such function $\tau$ then it holds simultaneously for any finite number of such functions.

We do induction on $n$. For the base case $n = 1$, by Lemma~\ref{spec:bi:any}, for each $c \in \KD^m$  there is a $\KD(c)$-definable special bijection $T_{c}$ on $A$ such that, for every $\RV$-polydisc $\gq \sub T_{c}(A)$ and every $i$, the set $(\rv \circ f_c \circ T_{c}^{-1})(\gq)$ is a singleton $\set{t_{c,\gq}}$. Let $\lambda_{c, k}$ enumerate (the graphs of) the focus maps of the components of $T_{c}$. Let $\lambda : \bigcup_{c} \biguplus_k \lambda_{c, k} \times c \fun A$ be the definable function induced by the bijections $T^{-1}_{c}$. By Corollary~\ref{VF:image:RV:inj}, there is a special bijection $T$ on $A$ such that $T(\ran(\lambda))$ is an $\RV$-pullback (of dimension $0$). So, by Lemma~\ref{two:spec:ann}, every function $T_{c} \circ T^{-1}$ is $\rv$-contractible. It follows that for every $\RV$-polydisc $\gr \sub T(A)$ and every $c \in \KD^m$ there is an $\RV$-polydisc $\gq_{\gr} \sub T_{c}(A)$ such that $T^{-1}(\gr) \sub T_{c}^{-1}(\gq_{\gr})$ and hence the set $(\rv \circ f_c \circ T^{-1})(\gr)$ is the singleton $\set{t_{c, \gq_{\gr}}}$. This concludes the base case of the induction.

For the inductive step, without loss of generality, we may concentrate on a single $\RV$-polydisc $\gp \sub A$ of the form $\rv^{-1}(t) \sub \VF^n$. By the inductive hypothesis, for each $a \in \gp_1$, there exists a $\KD(a)$-definable special bijection $R_{a}$ on $\gp_{> 1}$ such that, for every $c \in \KD^m$, the function $\tau(a, -, c) \circ  R^{-1} \circ R_a^{-1}$ is $\rv$-contractible. Thus,
\[
G_{a,c} = \lg \circ \tau(a, -, c) \circ  R^{-1} \circ R_a^{-1} \circ \lg^{-1}
\]
may be understood as a function between two sets in $\KD$. Let $U_{a, k}$ enumerate the loci of the components of $R_{a}$ and $\lambda_{a, k}$ the corresponding focus maps. By compactness, there are
\begin{itemize}
  \item a quantifier-free formula $\psi$ such that $\psi(a, c)$ defines (the graph of) $G_{a,c}$,
  \item a quantifier-free formula $\theta$ such that $\theta(a)$ determines the sequence $(\lg(U_{a, k}))_k$ as well as the coordinates targeted by $\lambda_{a, k}$.
\end{itemize}
Recall Remark~\ref{term:pole}.  Let $H_{j}(X_1, \ldots)$ enumerate the top terms of $\psi$ and $\theta$. Applying the inductive hypothesis again, we obtain a special bijection $T_1$ on $\gp_1$ such that, for every tuple $b$ in $\KD$ of the right length,  the function $H_{j}(-, b) \circ T_1^{-1}$ is $\rv$-contractible. This means that, for every $\RV$-polydisc $\gq \sub T_1(\gp_1)$ and every $a_1, a_2 \in T_1^{-1}(\gq)$,
\begin{itemize}
  \item the formulas $\psi(a_1, c)$, $\psi(a_2, c)$ define the same function for every $c \in \KD^{m}$,
  \item the special bijections $R_{a_1}$, $R_{a_2}$ may be glued together in the obvious way to form one special bijection on $\{a_1, a_2\} \times \gp_{> 1}$.
\end{itemize}
Consequently, $T_1$ and $R_{a}$ induce a special bijection $T$ on $\gp$ such that $\tau(-,  c) \circ R^{-1} \circ T^{-1}$ is $\rv$-contractible for every $c \in \KD^{m}$, as desired.
\end{proof}

\begin{lem}\label{simul:special:dim:1}
Let $A$, $B$ be definable sets of dimension $\leq 1$ and $f : A \fun B$ a definable function.  Then there exist special bijections $T_A : A \fun A^{\sharp}$, $T_B : B \fun B^{\sharp}$ and a bijection $f^{\sharp}_{\downarrow}: \lg(A^{\sharp}) \fun \lg(B^{\sharp})$ such that $A^{\sharp}$, $B^{\sharp}$ are $\RV$-pullbacks and the diagram (\ref{fub:one}) commutes.
\begin{equation}\label{fub:one}
 \bfig
  \square(0,0)/->`->`->`->/<600,350>[A`A^{\sharp}`B`B^{\sharp};
  T_A`f``T_B]
 \square(600,0)/->`->`->`->/<600,350>[A^{\sharp}`\lg(A^{\sharp})`B^{\sharp} `\lg(B^{\sharp});  \lg`f^{\sharp}`f^{\sharp}_{\downarrow}`\lg]
 \efig
\end{equation}
\end{lem}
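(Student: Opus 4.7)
The plan is to reduce the assertion to simultaneous $\rv$-contraction of $f$ in both directions, exploiting that both sets have dimension at most $1$.

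First, I would apply Corollary~\ref{VF:image:RV:inj} to $B$ to obtain a special bijection $T_B^0 : B \fun B_0^\sharp$ with $B_0^\sharp$ an $\RV$-pullback. By Corollary~\ref{fun:yerm}, the composition $g := T_B^0 \circ f : A \fun B_0^\sharp$ is, after finitely partitioning $A$, given coordinate-wise by $\lan{T}{K}{}$-terms. Applying Corollary~\ref{VF:image:RV:inj} to $A$ first yields a preliminary $T_A^0 : A \fun A_0^\sharp$ making $A_0^\sharp$ an $\RV$-pullback; invoking Lemma~\ref{spec:bi:any} coordinate-wise on each $\RV$-polydisc of $A_0^\sharp$ (with Theorem~\ref{spec:bi:term:cons:disc} used for uniformity in parameters, combined with compactness), I obtain a further special bijection $T_A^1 : A_0^\sharp \fun A_1^\sharp$ such that, setting $T_A := T_A^1 \circ T_A^0$, each $\RV$-polydisc of $A_1^\sharp$ is carried by $f^\sharp := T_B^0 \circ f \circ T_A^{-1}$ into a single $\RV$-polydisc of $B_0^\sharp$.

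Next, since $f$ (and hence $f^\sharp$) is a bijection, I would apply the symmetric procedure to $(f^\sharp)^{-1}$: find a special bijection $T_B^1 : B_0^\sharp \fun B^\sharp$ so that the inverse map likewise sends each $\RV$-polydisc of $B^\sharp$ into a single $\RV$-polydisc of $A_1^\sharp$. Setting $T_B := T_B^1 \circ T_B^0$ and $A^\sharp := A_1^\sharp$, and observing that each $\RV$-polydisc of an $\RV$-pullback maps to a single point under $\lg$, the rule $\lg(\gp) \efun \lg(f^\sharp(\gp))$ defines $f^\sharp_\downarrow : \lg(A^\sharp) \fun \lg(B^\sharp)$, and the symmetric $\rv$-compatibility ensures it is a bijection. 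Commutativity of diagram~(\ref{fub:one}) is then automatic, since $\lg \circ f^\sharp$ and $f^\sharp_\downarrow \circ \lg$ agree on every $\RV$-polydisc of $A^\sharp$ by construction.

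The principal obstacle is that the symmetric refinement $T_B^1$ may a priori disturb the $\rv$-compatibility of $f^\sharp$ already secured on the $A$-side: subdividing a polydisc $\gq \sub B_0^\sharp$ via a focus map can split the $f^\sharp$-preimage of that polydisc across several new target polydiscs. Here the hypothesis $\dim A, \dim B \leq 1$ is indispensable, for every polydisc is either a point of $\KD$ or a single open $\VF$-disc, so each such splitting of $\gq$ pulls back through $f^\sharp$ to only finitely many new pieces in $A_1^\sharp$, which can be absorbed into one final special bijection on the $A$-side via another application of Corollary~\ref{VF:image:RV:inj} combined with Lemma~\ref{two:spec:ann}. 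This finite-branching termination is exactly what fails in higher dimensions and is what forces the dimension hypothesis in the statement.
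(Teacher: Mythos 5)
Your opening reduction (make $A$, $B$ into $\RV$-pullbacks and make $f^\sharp$ $\rv$-contractible via Corollary~\ref{VF:image:RV:inj} and Lemma~\ref{spec:bi:any}) matches the paper's first step, but the way you then handle the two-sided compatibility has a genuine gap. First, the quantitative claim underpinning your termination argument is false: subdividing an $\RV$-polydisc $\gq\sub B_0^\sharp$ of dimension $1$ by a focus map does not produce finitely many new polydiscs --- it replaces the open disc by an $\RV$-pullback consisting of \emph{infinitely} many $\RV$-polydiscs, one for each value of $\rv$ on the translated disc. So the $f^\sharp$-preimage of this subdivision is an infinite partition of the corresponding polydisc $\gp\sub A_1^\sharp$, and there is no a priori reason that this infinite partition is induced by a special bijection on the $A$-side. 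Second, even setting that aside, your ``one final special bijection on the $A$-side'' to absorb the new pieces would itself refine the polydiscs of $A_1^\sharp$ and could again break the $\rv$-contractibility of $(f^\sharp)^{-1}$ just secured on the $B$-side; you give no argument that this back-and-forth terminates, and Lemma~\ref{two:spec:ann} (which compares two \emph{given} special bijections) does not supply one.

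The missing idea is the disc-to-disc property. After taking a $K$-partition $p$ of the graph of $f$ and a further definable finite partition, \cite[Lemma~2.24]{Yin:tcon:I} guarantees that each piece of $f$ sends open polydiscs to open polydiscs \emph{and vice versa}; a further application of Lemma~\ref{spec:bi:any} makes these pieces $\RV$-pullbacks, so that $f$ itself is disc-to-disc. This is exactly what rules out the pathology you worry about: once a single special bijection $T_B$ on $B$ is chosen with $(T_B\circ f)^{-1}$ $\rv$-contractible, the preimages under $f$ of the discs and focus maps occurring in $T_B$ are again discs and (graphs of) focus maps, so the induced decomposition of $A$ is realized by a special bijection $T_A$, with no further correction needed on the $B$-side. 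That is the content of the argument of \cite[Lemma~5.10]{Yin:tcon:I}, which the paper invokes at this point. Without the disc-to-disc step your construction does not close.
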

\begin{proof}
By Corollary~\ref{VF:image:RV:inj} and Lemma~\ref{spec:bi:any},  we may assume that $A$, $B$ are already $\RV$-pullbacks and $f$ is already $\rv$-contractible. Let $p$ be a $K$-partition of (the graph of) $f$. For each $d \in \ran(p)$, set $\dom(f_{d}) = A_{d}$ and $\ran(f_{d}) = B_{d}$. Then each $A_{d}$ is contained in a set of the form $\sn(\VF \times \RV^m)$, similarly for $B_{d}$. Then, by~\cite[Lemma~2.24]{Yin:tcon:I}, there is a definable finite partition of $A_{d}$ into sets $A_{d,i}$ such that each $f_{d} \rest A_{d,i}$ has the so-called disc-to-disc property (\cite[Definition~2.23]{Yin:tcon:I}), that is, for every open polydisc $\ga  \sub \dom(f_d)$, $f_d(\ga)$ is also an open polydisc, and vice versa. Applying  Lemma~\ref{spec:bi:any} to the function on $A$ given by $a \efun (d, i)$ for $a \in A_{d,i}$, where $(d, i)$ is a tuple in $K$, we may assume that each $A_{d,i}$ is an $\RV$-pullback and hence $f$ has the disc-to-disc property, in particular, for each $\RV$-polydisc $\gp \sub A$, $f(\gp)$ is an open polydisc contained in an $\RV$-polydisc. By Lemma~\ref{spec:bi:any} again, there is a special bijection $T_B: B \fun B^{\sharp}$ such that $(T_B \circ f)^{-1}$ is $\rv$-contractible. The adjustments we have done so far are summarized in the opening sentence of the proof of~\cite[Lemma~5.10]{Yin:tcon:I} and, consequently, from this point on, we may simply follow the argument therein.
\end{proof}

\section{Universal additive invariant}\label{sect:uai}

One advantage of having a diagonal cross-section in our formalism is that, unlike in, say, \cite{hrushovski:kazhdan:integration:vf, Yin:int:acvf, Yin:tcon:I, Yin:int:expan:acvf}, specialization of the universal additive invariant to $\Z$ (generalized Euler characteristic) does not require a lengthy structural analysis
of definable sets in the $\RV$-sort, since all the relevant prerequisites are now fullfilled by the \omin-minimality of $K$.

\begin{defn}[$\VF$-categories]\label{defn:c:VF:cat}
The objects of the category $\VF_n$ are the definable sets of dimension at most $n$. Its morphisms are precisely the definable bijections between such objects. Set $\VF_* = \bigcup_n \VF_n$.
\end{defn}

By Remark~\ref{dim:well}, $\VF_0$ is equivalent to the core $\Def^C_K$ of the category $\textup{Def}_K$ of $K(\bb S)$-definable sets in $K$ and hence, by Proposition~\ref{Kstab}, is equivalent to the category of definable sets in any \omin-minimal fields. So the semiring $\gsk \Def^C_K$ may be denoted by $\dO$ for brevity.

\begin{thm}\label{sig:dete}
For any two objects $A, B \in \Def^C_K$, $[A] = [B]$ in $\gsk \Def^C_K$ if and only if $\dim_K(A) = \dim_K(B)$ and $\chi(A) = \chi(B)$.
\end{thm}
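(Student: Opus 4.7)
The forward direction is a formality. Both $\dim_K$ and $\chi$ descend to semiring homomorphisms out of $\gsk \Def^C_K = \dO$: dimension, additive under Cartesian product and $\max$\nobreakdash-additive under disjoint union, gives a morphism into $(\N \cup \{-\infty\}, \max, +)$, while Euler characteristic (well-defined in any o\nobreakdash-minimal structure, in particular in the $T_{\exp}$\nobreakdash-model $K$) gives a morphism into $\Z$. Hence $[A] = [B]$ forces $(\dim_K A, \chi(A)) = (\dim_K B, \chi(B))$.

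For the converse, the plan is to reduce every class to a polynomial in the single class $[K]$ subject to one essential relation. By Proposition~\ref{Kstab} the relevant definable sets are o\nobreakdash-minimal in $K$, so cell decomposition applies and writes $A$ as a finite disjoint union of cells. Each $k$\nobreakdash-cell is definably homeomorphic to the open box $(0,1)^k$, which in turn is definably bijective to $K^k$ via any definable order-preserving bijection $(0,1) \to K$ (such as $t \efun (2t - 1)/(t(1 - t))$, available in any o\nobreakdash-minimal expansion of a real closed field). Consequently,
\[
  [A] = \sum_{i = 0}^{d_A} a_i\, [K]^i \quad \text{in } \dO, \qquad a_i \in \N,\ a_{d_A} \geq 1.
\]

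The essential semiring identity is
\[
  [K] = 2[K] + 1,
\]
read off from the partition $K = (-\infty, 0) \sqcup \{0\} \sqcup (0, \infty)$ together with the definable bijections $(-\infty, 0) \cong K \cong (0, \infty)$. Multiplying through by $[K]^{i - 1}$ gives $[K]^i = 2[K]^i + [K]^{i - 1}$ for every $i \geq 1$. In terms of coefficient tuples, this says that the tuples $(a_0, \ldots, a_{i - 1}, a_i, \ldots, a_d)$ and $(a_0, \ldots, a_{i - 1} + 1, a_i + 1, \ldots, a_d)$ represent the same class in $\dO$ whenever $a_i \geq 1$.

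It then remains to establish the combinatorial claim: any two non-negative tuples $(a_0, \ldots, a_d)$ and $(b_0, \ldots, b_d)$ with $a_d, b_d \geq 1$ and $\sum_i (-1)^i a_i = \sum_i (-1)^i b_i$ are equivalent under these moves. Over $\Z$ this is transparent, since the vectors $e_{i - 1} + e_i$ for $1 \leq i \leq d$ span exactly the kernel of the alternating-sum functional on $\Z^{d + 1}$. The principal obstacle is positivity bookkeeping: ensuring that every intermediate tuple appearing in the reduction chain is non-negative with a positive top coordinate, so that each individual move is licit. I would handle this by first applying enough forward moves on both sides to inflate all coordinates, and then matching the inflated tuples by further forward and backward moves; a short induction on $d$, starting at the top index and propagating downwards, packages this inflate-then-match argument cleanly.
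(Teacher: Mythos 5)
Your argument is correct, but note that the paper does not prove this statement at all: it is exactly van den Dries's classification of definable sets up to definable bijection in o\nobreakdash-minimal expansions of fields, and the paper simply cites \cite[\S~8.2.11]{dries:1998}; what you have written is a sound reconstruction of that standard proof (cell decomposition, the relation $[K]=2[K]+1$, and the alternating-sum matching with positivity handled by inflation). The only imprecision worth fixing is the claim that a $k$\nobreakdash-cell is definably \emph{homeomorphic} to $(0,1)^k$: what one gets cheaply, and all that the class in $\gsk \Def^C_K$ requires, is a definable \emph{bijection}, obtained from the coordinate projection of the cell onto an open cell of $K^k$ followed by fiberwise definable bijections of (possibly unbounded) intervals onto $K$.
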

Here $\chi$ is the Euler characteristic associated with \omin-minimal (field) structures (see \cite[\S~4.2]{dries:1998}).
\begin{proof}
See \cite[\S~8.2.11]{dries:1998}.
\end{proof}

Therefore, $\gsk \Def^C_K$ admits an explicit description as follows. Its underlying set is $(\{0\} \times \N) \cup (\N^+ \times \Z)$ and, for $(a, b), (c, d) \in \dO$,
\begin{equation}\label{oring}
(a, b) + (c, d) = (\max\{a, c\}, b+d), \quad (a, b) \cdot (c, d) = (a + c, b  d).
\end{equation}
The dimensional part is lost in the groupification $\ggk \Def^C_K$, that is, $\ggk \Def^C_K \cong \Z$. The sub-semiring over $\{0\} \times \N$ is isomorphic to the monoid $\N$ and hence is simply written as such.

\begin{defn}[$\Lambda$-categories]\label{def:Ga:cat}
For each $n \in \N$, the category $\Lambda[n]$ is a copy of $\Def^C_K$. Set $\Lambda[\leq \! n] = \coprod_{i \leq n} \Lambda[i]$ and $\Lambda[*] = \coprod_n \Lambda[n]$.
\end{defn}

\begin{rem}
For a definable set $U$ in $K$, we write $[U]_n$ to indicate that this is an element in $\gsk \Lambda[n]$. An object $U \in \Lambda[\leq \! n]$ is in general understood as a coproduct $\coprod_{i \leq n} U_i$ with $U_n \neq \0$, and to emphasize this we shall write $U \in^* \Lambda[\leq \! n]$ or $[U] \in^* \gsk \Lambda[\leq \! n]$.
\end{rem}

So $\gsk \Lambda[*] \cong \dO[X]$ and $\ggk \Lambda[*] \cong \Z[X]$.

\begin{defn}\label{def:L}
The \memph{$n$th canonical lifting map} $\mathbb{L}_n: \ob \Lambda[n] \fun \ob \VF_n$ is given by
\begin{equation}\label{normpull}
\mathbb{L}_n(U) = \underbrace{(1 + \MM) \times \ldots \times (1 + \MM)}_{\text{$n$-fold}} \times U.
\end{equation}
Set $\mathbb{L}_{\leq n} = \coprod_{i \leq n} \mathbb{L}_{i}$ and $\mathbb{L} = \bigcup_n \bb L_{\leq n}$.
\end{defn}

\begin{prop}\label{L:sur:c}
The lifting map $\bb L$ induces a surjective homomorphism, also denoted by $\bb L$,
\[
\gsk \Lambda[*] \fun \gsk \VF_*.
\]
\end{prop}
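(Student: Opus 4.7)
The plan is to split the proof into two tasks: checking that $\bb L$ descends to a well-defined semiring homomorphism, and then establishing surjectivity. First I would verify functoriality of each $\bb L_n$: a definable bijection $f : U \fun V$ in $\Lambda[n]$ lifts to the definable bijection $\id_{(1+\MM)^n} \times f$ between $\bb L_n(U)$ and $\bb L_n(V)$; and disjoint unions in $\Lambda[n]$ are plainly carried to disjoint unions in $\VF_n$. Summing over $n$ gives a semigroup homomorphism $\gsk\Lambda[*] \fun \gsk\VF_*$. Multiplicativity reduces to the coordinate-reordering bijection
\[
\bb L_n(U) \times \bb L_m(V) = (1+\MM)^n \times U \times (1+\MM)^m \times V \cong (1+\MM)^{n+m} \times (U \times V) = \bb L_{n+m}(U \times V),
\]
which is plainly definable. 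This completes the first task.

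The real content lies in surjectivity. Given any $A \in \VF_n$, I would first invoke Corollary~\ref{VF:image:RV:inj} to obtain a special bijection $T : A \fun A^\sharp$ with $A^\sharp$ an $\RV$-pullback; this already gives $[A] = [A^\sharp]$ in $\gsk\VF_*$, so it suffices to realize $[A^\sharp]$ in the image of $\bb L$. I would then decompose $A^\sharp$ by the $\VF$-dimension of its constituent $\RV$-polydiscs. A typical $\RV$-polydisc $\gp \sub A^\sharp$ of $\VF$-dimension $k$ has, after regularization (Convention~\ref{conv:can}), the form $c \cdot (1+\MM)^k \times \{d\}$ with $c \in (\VF^\times)^k$ recoverable from its $\rv$-class via $\sn$ and $d \in \KD^{n-k}$. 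Scaling coordinate-wise by $\sn(\rv(c))$ carries $\gp$ definably onto $(1+\MM)^k \times \{(\lg(c), d)\}$, a single fiber of $\bb L_k$ over a $K$-point. Parameterizing these fiberwise bijections over the indexing set $C_k$ of all dimension-$k$ polydiscs in $A^\sharp$ --- which by Proposition~\ref{Kstab} is (via $\av$) in bijection with an $\lan{}{exp}{}(K)$-definable set $U_k$, that is, an object of $\Lambda[k]$ --- yields a definable bijection from the dimension-$k$ piece $A^\sharp_k$ onto $\bb L_k(U_k)$. Summing over $k \leq n$,
\[
[A] = [A^\sharp] = \sum_{k \leq n} [\bb L_k(U_k)] = \bb L\!\left(\sum_{k \leq n} [U_k]\right),
\]
so $[A]$ is in the image of $\bb L$.

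The hard part will be the uniform rescaling-and-gluing step in the surjectivity argument: one has to verify that the base data $(c,d)$ parameterizing the individual $\RV$-polydiscs of $A^\sharp$ assemble into a single definable family living (via $\av$ and $\lg$) in $K$, so that the fiberwise bijections $\gp \fun (1+\MM)^k \times \{\star\}$ glue into a genuinely definable bijection $A^\sharp_k \fun \bb L_k(U_k)$. This is precisely where stable embeddedness of $K$ (Proposition~\ref{Kstab}), the section $\sn$ from Notation~\ref{bi:cor}, and the diagonal cross-section $\pi$ do the real work; everything else is bookkeeping.
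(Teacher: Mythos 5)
Your proposal is correct and follows essentially the same route as the paper: the homomorphism part is the routine bookkeeping the paper dismisses as clear, and surjectivity is obtained exactly as in the paper by Corollary~\ref{VF:image:RV:inj} together with the map $\av \circ \lg$ (equivalently $\sn$) to rescale each $\RV$-polydisc of the resulting $\RV$-pullback into the normal form (\ref{normpull}). Your extra detail on decomposing by polydisc dimension and gluing the fiberwise rescalings via Proposition~\ref{Kstab} is just an expansion of what the paper compresses into one sentence.
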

\begin{proof}
It is clear that every isomorphism class of $\Lambda[\leq \! n] $ is mapped via $\bb L$ into an isomorphism class of $\VF_n$. On the other hand, by Corollary~\ref{VF:image:RV:inj}, every isomorphism class of $\VF_n$ contains an $\RV$-pullback which, due to the presence of the map $\av \circ \lg: \VF^\times \fun \Lambda$ (Notation~\ref{bi:cor}), may be assumed to take the form indicated in (\ref{normpull}). This implies surjectivity.
\end{proof}

Denote the subset $K^+ \times (0, 1) \sub \Lambda$ by $Q^+$, similarly $K^- \times (-1, 0) \sub \Lambda$ by $Q^-$, $Q^+ \cup Q^- \sub \Lambda$ by $Q$, and $Q \cup 0 \sub \Lambda_0$ by $Q_0$.

\begin{defn}\label{defn:blowup:coa}
Suppose that $n > 0$ and $U \in \Lambda[n]$. The \memph{elementary blowup} of $U$ is the object
\[
U^{\sharp} = U \amalg (U \times Q) \in \Lambda[n{-}1] \amalg \Lambda[n].
\]
Let $V \in \Lambda[n]$, $C \sub V$, and $F : U \fun C$ be an $\Lambda[n]$-morphism. Then $U^{\sharp} \uplus (V \mi C)$ is a \memph{blowup of $V$ via $F$}, written as $V^{\sharp}_F$; the subscript $F$ is dropped  if there is no danger of confusion. The object $C$ is called the \memph{locus} of the blowup $V^{\sharp}_F$. A \memph{blowup of length $n$} is a composition of $n$ blowups.

If $n = 0$ then the only blowup of an object is the identity, corresponding to the empty locus.
\end{defn}

\begin{defn}
For any object $U = \coprod_{i \leq n} U_i \in \Lambda[\leq \! n]$, its \memph{signature} $\sig(U)$ is the sequence of pairs of integers $(\dim_K(U_i), \chi(U_i))_i$. By Theorem~\ref{sig:dete} again, $[U] \in \gsk \Lambda[*]$ is completely determined by $\sig(U)$, so we may write $\sig([U]) = (\dim_K([U_i]), \chi([U_i]))_i$. The sum
\[
\sum_i (-1)^i \chi([U_i])
\]
is an \memph{Euler characteristic} of $[U]$ and is denoted by $\chi([U])$.

\end{defn}

Observe that, since $\chi(Q) = 2$, if $U^\sharp$ is a blowup of $U$ then
\begin{equation}\label{eublow}
\chi([U^\sharp]) = \chi([U]).
\end{equation}

\begin{lem}\label{evenup}
Let $n > 0$ and $U \in^* \Lambda[\leq \! n]$. For each $i \leq n$ let $m_i$ be an integer so that $\sum_i (-1)^i m_i = 0$. Let $l$ be a natural number with
\[
l \geq \max\{\dim_K(U_i) + 2: i \leq n\}.
\]
Then there is a blowup $U^{\sharp}$ of $U$ with
\[
\sig(U^{\sharp}) = ((l-2, m_0 + \chi(U_0)), (l, m_1 + \chi(U_1)), \ldots, (l, m_n + \chi(U_n))).
\]
\end{lem}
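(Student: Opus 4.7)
The plan is to build $U^\sharp$ in two stages: a dimension-raising stage followed by an Euler-characteristic adjustment stage. The algebraic skeleton is the observation that a single blowup at level $i \ge 1$ with locus $C \subseteq V_i$ simultaneously adds $\chi(C)$ to the Euler characteristic at level $i - 1$ (from the copy deposited downstairs) and at level $i$ (from $\chi((V_i \setminus C) \amalg (C \times Q)) = \chi(V_i) + \chi(C)$, using $\chi(Q) = 2$). If $a_j$ denotes the aggregate Euler characteristic of loci used at level $j$, the cumulative changes obey $\Delta\chi_j = a_j + a_{j+1}$ with $a_0 = a_{n+1} = 0$. This tridiagonal system is solvable in integers iff $\sum_j (-1)^j m_j = 0$, and the unique solution is $a_j = \sum_{k=j}^{n} (-1)^{k-j} m_k$; independently, (\ref{eublow}) verifies that the alternating sum of Euler characteristics is preserved by any sequence of blowups.

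In the dimension-raising stage I work top-down. At level $n$, I repeatedly blow up the entire current level-$n$ object: each elementary blowup raises its dimension by $2$ and deposits an isomorphic copy at level $n-1$. If $l - \dim U_n$ is even, iteration brings $\dim V_n$ exactly to $l$; if it is odd, iteration first reaches $l - 1$ and then one further blowup, with a locus $C \subset V_n$ of dimension $l - 2$ (available since $\dim V_n = l - 1 \ge l - 2$), increases the dimension to $l$. A direct bookkeeping check shows that the contributions accumulated at level $n-1$ have maximum dimension exactly $l - 2$. For each subsequent level $i \in \{n-1, \ldots, 1\}$, starting inductively from $\dim V_i = l - 2$ after the deposits from above, a single elementary blowup of the whole $V_i$ raises its dimension to $l$ while pushing a dim-$(l-2)$ copy to level $i - 1$. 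At the end, levels $1, \ldots, n$ all have dimension $l$ and level $0$ has dimension $l - 2$, as required.

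For the Euler-characteristic adjustment, let $p_i$ denote the net change at level $i$ produced by the first stage. By (\ref{eublow}), $\sum_i (-1)^i p_i = 0$, hence $\sum_i (-1)^i (m_i - p_i) = 0$, and the recurrence $a'_j + a'_{j+1} = m_j - p_j$ has a unique integer solution $a'_1, \ldots, a'_n$. For each $i \ge 1$, I perform one additional blowup at level $i$ with a locus $C_i \subset V_i$ of dimension at most $1$ and $\chi(C_i) = a'_i$: any integer $p - q$ is realized by the disjoint union of $p$ isolated points and $q$ pairwise disjoint open subintervals of a definable arc in $V_i$, and such an arc is available since $\dim V_i = l \ge 2$. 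Because these loci have dimension at most $1$, the copies they push to level $i - 1$ (whose dimension is already at least $l - 2$) leave all dimensions unchanged once $l \ge 3$. The most delicate step is the parity bookkeeping at level $n$ and the verification that each lower level enters its processing with dimension exactly $l - 2$ from the deposits above; the degenerate case $l = 2$, in which level $0$ only admits nonnegative $\chi$, is implicit in the ambient applications.
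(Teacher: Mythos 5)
Your argument is correct and takes a genuinely different route from the paper's. You decouple the two tasks: a first top-down pass of elementary blowups of entire levels (with one partial blowup at level $n$ to fix the parity of $l-\dim_K(U_n)$) brings the dimensions to $(l-2,l,\ldots,l)$, and a second pass solves the tridiagonal system $a'_i+a'_{i+1}=m_i-p_i$ and realizes each $a'_i$ as the Euler characteristic of a locus of dimension at most $1$ (points minus open intervals). The paper instead does everything in a single top-down induction on $n$: at level $n$ it picks disjoint cells $A$, $B_i$, $C_j$ with $\sum_k 2^k\chi(A)+\sum_i\chi(B_i)+\sum_j\chi(C_j)=m_n$, blows up along the tower $A, A\times Q, A\times Q\times Q,\ldots$ so as to reach dimension $l$ and Euler characteristic $m_n+\chi(U_n)$ at level $n$ while depositing exactly $m_n$ at level $n-1$, and then replaces $m_{n-1}$ by $m_{n-1}-m_n$; unrolling that recursion gives precisely your closed form $a_j=\sum_{k\ge j}(-1)^{k-j}m_k$. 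Your version makes the role of the hypothesis $\sum_i(-1)^i m_i=0$ transparent (solvability of the linear system) and isolates the dimension bookkeeping, at the cost of carrying the intermediate quantities $p_i$; the paper's version is more compact but hides the same linear algebra inside the induction. One shared caveat: your Euler-adjustment step needs $l\ge 3$ (a $1$-dimensional locus $C$ produces $C\times Q$ of dimension $3$), but the paper's construction has exactly the same implicit restriction (its loci $C_j$ are $1$-cells), and it is harmless since the lemma is only invoked with $l$ sufficiently large.
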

\begin{proof}
We first assume that $l$ is even. Since $U_n$ is not empty, we may blow up one point in it and thereby assume $\dim_K(U_n) \geq 2$ and $U_{n-1} \neq \0$ as well. Let $A$, $B_i$, and $C_j$ be finitely many pairwise disjoint cells contained in $U_n$ with $\dim_K(A) = \dim_K(B_i) = 0$ and $\dim_K(C_j) = 1$ such that
\begin{equation}\label{blowup:sum}
\sum_{k = 0}^{l/2 - 1} 2^k\chi(A) + \sum_i \chi(B_i) + \sum_j \chi(C_j) = m_n.
\end{equation}
Now we blow up $U_n$ at $A$, then at $A \times Q$, $A \times Q \times Q$, and so on, altogether for $l/2$ times, and then at $\bigcup_{i,j} B_i \cup C_j$. The end result is an object $V_{n-1} \amalg V_n$ with
\begin{align*}
\chi(V_n) & =  \chi(A)(\chi(Q)^{l/2} -1) + \chi \Bigl(\bigcup_{i,j} B_i \cup C_j \Bigr) (\chi(Q) - 1) + \chi(U_n) \\
    & =  \sum_{k = 0}^{l/2 - 1} 2^k\chi(A) + \sum_i \chi(B_i) + \sum_j \chi(C_j) + \chi(U_n)\\
    & = m_n + \chi(U_n)
\end{align*}
and, by (\ref{blowup:sum}) directly, $\chi(V_{n-1}) = m_n$. So we have
\[
\sig(V_n) = (l, m_n + \chi(U_n)) \dand \sig(U_{n-1} \uplus V_{n-1}) = ( l-2, \chi(U_{n-1}) + m_n).
\]
Replacing $m_{n-1}$ with $m_{n-1} - m_n$, we see that the assertion follows from an induction on $n$.

If $l$ is odd then we take $A$ to be a cell of dimension $1$ and change the first summand in (\ref{blowup:sum}) to $\sum_{k = 0}^{(l-1)/2 - 1} 2^k\chi(A)$. The rest of the modification is straightforward.
\end{proof}

\begin{lem}\label{blowup:equi:class:coa}
For any two objects $U, V \in^* \Lambda[\leq \! n]$ with $n > 0$, there are isomorphic blowups $U^{\sharp}$, $V^{\sharp}$ of $U$, $V$ if and only if $\chi([U]) = \chi([V])$.
\end{lem}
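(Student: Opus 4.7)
The plan is to show both directions, with the reverse direction being a direct application of Lemma~\ref{evenup}.

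The forward direction is immediate from equation (\ref{eublow}): if $U^{\sharp}$ and $V^{\sharp}$ are isomorphic blowups of $U$ and $V$, then $\chi([U]) = \chi([U^{\sharp}]) = \chi([V^{\sharp}]) = \chi([V])$, since blowups preserve the Euler characteristic and isomorphic objects share it.

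For the reverse direction, assume $\chi([U]) = \chi([V])$. Choose a natural number $l$ with $l \geq \max\{\dim_K(U_i) + 2,\ \dim_K(V_i) + 2 : i \leq n\}$. I will apply Lemma~\ref{evenup} twice, aiming to coerce both $U$ and $V$ to blowups with the same signature. First, apply Lemma~\ref{evenup} to $V$ with all $m_i = 0$ (the alternating sum condition $\sum_i (-1)^i m_i = 0$ holds trivially) to obtain a blowup $V^{\sharp}$ with
\[
\sig(V^{\sharp}) = ((l-2,\ \chi(V_0)), (l,\ \chi(V_1)), \ldots, (l,\ \chi(V_n))).
\]
Next, set $m_i = \chi(V_i) - \chi(U_i)$ for each $i \leq n$. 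The hypothesis $\chi([U]) = \chi([V])$ gives exactly $\sum_i (-1)^i m_i = \chi([V]) - \chi([U]) = 0$, so Lemma~\ref{evenup} applies to $U$ and yields a blowup $U^{\sharp}$ with $\sig(U^{\sharp})$ equal to the same tuple displayed above.

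Since $\sig(U^{\sharp}) = \sig(V^{\sharp})$, Theorem~\ref{sig:dete} (applied componentwise, since the isomorphism type in $\Def^C_K$ is determined by dimension and Euler characteristic) yields an isomorphism between corresponding components in $\Lambda[\leq\!n]$, and these assemble into the required $\Lambda[\leq\!n]$-isomorphism $U^{\sharp} \cong V^{\sharp}$. The main obstacle is not in this lemma itself but was handled inside Lemma~\ref{evenup}: the combinatorial bookkeeping of realizing a prescribed Euler characteristic in each degree while respecting the parity constraint $\sum_i (-1)^i m_i = 0$ forced by (\ref{eublow}). Here that constraint is matched precisely by the hypothesis, so the whole argument reduces to invoking Lemma~\ref{evenup} once on each side and reading off the signatures.
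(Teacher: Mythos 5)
Your proof is correct and follows essentially the same route as the paper: the forward direction from (\ref{eublow}), and the reverse direction by applying Lemma~\ref{evenup} to $U$ with $m_i = \chi(V_i) - \chi(U_i)$ and to $V$ with $m_i = 0$ for a common sufficiently large $l$, then matching signatures via Theorem~\ref{sig:dete}.
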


Observe that if $U \in^* \Lambda[\leq \! n]$ and $V \notin^* \Lambda[\leq \! n]$ then it is not possible to have isomorphic blowups of $U$, $V$.

\begin{proof}
The ``only if'' direction is clear by (\ref{eublow}). For the ``if'' direction, let $l \in \N$ be sufficiently large and $m_i = \chi([V]_i) -\chi([U]_i)$. By Lemma~\ref{evenup}, there is a blowup $U^{\sharp}$ of $U$ with the signature in question; similarly for $V$ but with $m_i = 0$ for all $i$. Thus, by Theorem~\ref{sig:dete}, $[U^{\sharp}] = [V^{\sharp}]$.
\end{proof}

\begin{defn}
Let $\isp[n]$ be the full subcategory of $\Lambda[\leq \! n] \times \Lambda[\leq \! n]$ of pairs $(U, V)$ such that there exist isomorphic blowups $U^{\sharp}$, $V^{\sharp}$. Let $\isp[*] = \bigcup_{n} \isp[n]$.
\end{defn}

We will just write $\isp$ for all these subcategories if there is no danger of confusion.

\begin{rem}\label{semi:concr}
By Lemma~\ref{blowup:equi:class:coa}, $\isp$  may also be understood as a binary relation on isomorphism classes, which may be equivalently defined by the condition: for $[U], [V] \in^* \gsk \Lambda[\leq \! n]$ with $n > 0$,
\[
([U], [V]) \in \isp[n] \quad \text{if and only if} \quad \chi([U]) = \chi([V]);
\]
note that $\isp[0]$ is trivial, since the only blowups are the identities. Then it is clear that $\isp[n]$ is a semigroup congruence relation and $\isp[*]$ is a semiring congruence relation.

Let $\Lambda^{\fin}[0]$ be the full subcategory of finite objects of $\Lambda[0]$. Replacing $\Lambda[0]$ with $\Lambda^{\fin}[0]$ in $\Lambda[*]$, we obtain $\Lambda'[*]$. If $U \in^* \Lambda[\leq \! n]$ and $V \in^* \Lambda[\leq \! m]$ with $n \leq m$ then
\begin{gather*}
  [U] + [V] \in^* \gsk \Lambda[\leq \! m], \quad \chi([U] + [V]) = \chi([U]) + \chi([V]),\\
  [U][V] \in^* \gsk \Lambda[\leq \! m{+}n], \quad \chi([U][V]) = \chi([U])\chi([V]),
\end{gather*}
which are just the operations described in (\ref{oring}). Thus, we have canonical isomorphisms
\[
\gsk \Lambda[0] / \isp = \gsk \Lambda[0] \cong \dO \cong \gsk \Lambda'[*] / \isp.
\]
Denote by $\dO \llcorner \dO$ the semiring whose underlying set is two copies of $\dO$ conjoined at $\{0\} \times \N$, with the second one in the notation referred to as the dominator, and whose operations are given by those in $\dO$ for $(a, b)$, $(c, d)$ in the same copy of $\dO$ and, otherwise, with  $(c, d)$ in the dominator, by
\[
 (a, b) + (c, d) = (c, b+d), \quad (a, b) \cdot (c, d) = (c, b  d).
\]
So $\gsk \Lambda[*] / \isp \cong \dO \llcorner \dO$. In the groupification of $\dO \llcorner \dO$, both copies of $\dO$ are collapsed to $\Z$ and they must coincide because they are conjoined at $\N$. So  $\ggk \Lambda[*] / \isp \cong \Z$.
\end{rem}

\begin{lem}\label{blowup:leng:ma}
Let $U, V \in^* \Lambda[\leq \! k]$ with $[U] = [V]$. Let $U_1$, $V_1$ be blowups
of $U$, $V$ of lengths $m$, $n$, respectively. Then there are blowups $U_2$, $V_2$ of $U_1$, $V_1$ of lengths $n$,
$m$, respectively, with $[U_2] = [V_2]$.
\end{lem}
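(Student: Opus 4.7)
The strategy is a \emph{commutativity of blowups} argument: blowing up $U_1$ by a transported copy of the chain $V_1\to V$, and symmetrically blowing up $V_1$ by a transported copy of $U_1\to U$, produces objects in the same isomorphism class. The bridge between the two sides is an isomorphism $\phi\colon U\to V$ in $\Lambda[*]$, which exists because $[U]=[V]$ means gradewise equality in $\gsk\Lambda[n]\cong\gsk\Def^C_K$, and Theorem~\ref{sig:dete} ensures that each gradewise equality is witnessed by an actual definable bijection.

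I would proceed by induction on $n$. For the base case $n=0$, set $U_2=U_1$ (the trivial length-$0$ blowup of $U_1$) and define $V_2$ by transporting the blowup chain $U_1\to U$ through $\phi$, yielding a length-$m$ blowup of $V$ with $V_2\cong U_1=U_2$, so $[U_2]=[V_2]$. The case $m=0$ is symmetric.

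For the inductive step with $n\geq 1$, decompose $V_1$ as an elementary blowup $V_1=(V_1')^\sharp$ on top of a length-$(n-1)$ blowup $V_1'$ of $V$, with last-step locus $C\subseteq V_1'$ and morphism $F\colon W\to C$. Apply the inductive hypothesis to the quadruple $(U,V,U_1,V_1')$ with lengths $(m,n-1)$: obtain blowups $\widetilde U_1$ of $U_1$ of length $n-1$ and $\widetilde V'$ of $V_1'$ of length $m$ with $[\widetilde U_1]=[\widetilde V']$, hence an isomorphism $\psi\colon\widetilde U_1\to\widetilde V'$ in $\Lambda[*]$ by Theorem~\ref{sig:dete}. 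Use $\psi$ together with the natural contraction $\widetilde V'\to V_1'$ to transport the final-step data $(C,W,F)$ to a locus $\widetilde C\subseteq\widetilde U_1$ with a matching morphism; the elementary blowup of $\widetilde U_1$ at $\widetilde C$ is the desired $U_2$, a length-$n$ blowup of $U_1$. For $V_2$, perform the corresponding elementary blowup of $\widetilde V'$ at the pullback of $C$ and then reinterpret the resulting chain as a length-$m$ blowup of $V_1=(V_1')^\sharp$ rather than of $V_1'$; this amounts to swapping the order of the final elementary blowup at $C$ with the $m$ subsequent blowups inside $\widetilde V'$. Equality $[U_2]=[V_2]$ then follows because the total Euler characteristic is preserved by (\ref{eublow}), the gradewise dimensions agree by the symmetric construction, and Theorem~\ref{sig:dete} detects equality in $\gsk$.

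The main obstacle is the reordering of elementary blowups in the construction of $V_2$: I must justify that an elementary blowup followed by a length-$m$ blowup can be rewritten, up to isomorphism in $\Lambda[*]$, as a length-$m$ blowup followed by an elementary blowup with transported data. This is immediate when the loci involved are disjoint; the general case I would reduce to the disjoint case by first refining the blowups to separate the relevant loci, exploiting the flexibility provided by Lemma~\ref{blowup:equi:class:coa} and by padding with trivial (empty-locus) elementary blowups to maintain length bookkeeping. An alternative, more axiomatic route is to avoid explicit reordering entirely by showing that the set of pairs $(U_1,V_1)$ for which the conclusion holds is closed under one-step extension on either side, building up from the trivial case $m=n=0$.
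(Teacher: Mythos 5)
Your overall strategy --- induct on $n$, transport blowup data across an isomorphism $\phi\colon U\to V$, and commute the final elementary blowup of $V_1$ past the $m$ blowups supplied by the inductive hypothesis --- has the right shape, and it is broadly the shape of the argument the paper defers to (the proof of \cite[Lemma~5.29]{Yin:tcon:I}). But the step you set aside as ``the main obstacle'' is not a deferrable technicality: it is the entire content of the lemma, and neither of your proposed fixes resolves it. Concretely, let $C\subseteq V_1'$ be the locus of the last step of $V_1$, with morphism $F\colon W\to C$, and let $D$ be the locus of one of the $m$ blowups applied to $V_1'$. If $D\cap C\neq\emptyset$, then after blowing up at $C$ the set $D\cap C$ no longer exists inside $(V_1')^\sharp=W\amalg(W\times Q)\uplus(V_1'\setminus C)$; its preimage is $F^{-1}(D\cap C)\amalg\bigl(F^{-1}(D\cap C)\times Q\bigr)$, which is spread over two different grades of $\Lambda[*]$ and is not the locus of a single elementary blowup. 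Transporting the $D$-step across the $C$-step therefore costs up to three elementary blowups instead of one, which destroys exactly the length bookkeeping (length $n$ over $U_1$, length $m$ over $V_1$) that the statement demands; padding with empty-locus blowups can only lengthen a chain, never shorten one. Nor can you ``refine the blowups to separate the loci'': the chains $U_1$, $V_1$ are given data, and when a later locus meets $C$ the later blowup genuinely acts inside the exceptional set $W\amalg(W\times Q)$, so the two steps do not commute as stated. What is needed --- and what the cited proof supplies --- is an explicit case analysis over $C\setminus D$, $D\setminus C$ and $C\cap D$ in the crucial case $m=n=1$ of a double induction, showing how each side can be further blown up \emph{once} so that the results match.

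A secondary gap is your closing appeal to Theorem~\ref{sig:dete}. That theorem requires the dimension \emph{and} the Euler characteristic to agree in each grade, whereas (\ref{eublow}) only preserves the alternating sum $\chi([U])=\sum_i(-1)^i\chi([U_i])$; equal total $\chi$ together with equal gradewise dimensions does not yield $[U_2]=[V_2]$ (compare Lemma~\ref{blowup:equi:class:coa}, which from equal total $\chi$ only extracts isomorphic blowups of \emph{unspecified} lengths, and so cannot be used here). If your two constructions really are the same construction transported through $\psi$, you should conclude $U_2\cong V_2$ directly; if they are not, signatures will not rescue you.
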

\begin{proof}
This is the same statement as \cite[Lemma~5.29]{Yin:tcon:I}, whose formal proof works here almost verbatim.
\end{proof}

Let $U \in^* \Lambda[\leq \! n]$ and $T$ be a special bijection on $\bb L U$. The set $(\av \circ \lg \circ T)(\mathbb{L} U)$ is regarded as an object in $\Lambda[\leq \! n]$ and is denoted by $U_{T}$.

\begin{lem}\label{special:to:blowup:coa}
The object $U_T \in^* \Lambda[\leq \! n]$ is isomorphic to a blowup of $U$.
\end{lem}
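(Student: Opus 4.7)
The plan is to induct on the length $l$ of $T$. For $l = 0$, $T$ is the identity and $U_T \cong U$ after contracting the trivial $(1,1)$-factors arising from $(\av \circ \lg)((1+\MM)^n)$, which counts as a blowup of length $0$. For the inductive step I write $T = T_l \circ S$ with $S$ of length $l-1$ on $\bb L U$; by the inductive hypothesis $U_S$ is isomorphic to a blowup of $U$. Since a composition of blowups is by definition a blowup, it will then suffice to exhibit $U_T$ as an elementary blowup of $U_S$.

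To that end put $A := S(\bb L U)$. This set is an $\RV$-pullback, and via $\av \circ \lg = \sn \circ \rv$ its $\RV$-polydiscs are in bijection with the points of $U_S$. Let $C \sub A$ be the locus of $T_l$, and set $C_0 := (\av \circ \lg)(C) \sub U_S$. Off $C$ the map $T_l$ is the identity, contributing $U_S \mi C_0$ to $U_T$. On each $\RV$-polydisc $\gp \sub C$, the definition of a length-$1$ special bijection supplies a $\KD(\gp)$-$\lan{T}{RV}{}$-definable classical $\TCVF$-style length-$1$ bijection $T_\gp$ that translates the focused coordinate (say the first) by the focus value $\lambda_\gp$; after this translation the focused coordinate ranges over a shift of $\rv^{-1}(t_1)$ by an element of itself, i.e.\ over an open disc around $0$, which after normalizing by $\sn(t_1)$ is just $\MM$.

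The main local identity is then $\av(\MM) = Q_0 = Q \amalg \{0\}$: for $y \in \MM \mi \{0\}$, the diagonal cross-section ensures that $\ac$ is not twisted, so $\sgn \ac(y) = \sgn y = \sgn \vv(y)$ while $|\vv(y)| < 1$, placing $\av(y)$ in $Q^+ \cup Q^- = Q$; and $\av(0) = 0$ accounts for the central point. Each $\gp \sub C$ therefore contributes to $U_T$ a center point over $u = (\av \circ \lg)(\gp)$ (dropping one $\Lambda$-degree, since the focused coordinate has collapsed) together with a $Q$-fibre $\{u\} \times Q$. Collecting over $C_0$ yields
\begin{equation*}
U_T \,=\, (U_S \mi C_0) \amalg C_0 \amalg (C_0 \times Q) \,=\, (U_S \mi C_0) \amalg C_0^{\sharp},
\end{equation*}
which is precisely the blowup of $U_S$ via the identity morphism $C_0 \to C_0$. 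The hard part is this last local computation: checking that the translation performed by $T_l$ on a single $\RV$-polydisc, read through $\av \circ \lg$, reproduces exactly the $Q_0$-factor prescribed by the elementary blowup recipe, with the correct sign bookkeeping and the correct drop in $\Lambda$-degree on the center piece. It parallels the corresponding step in the $\RV$-sort analysis of \cite{Yin:tcon:I}, but is more direct here because $\av \circ \lg$ lands by construction in $\Lambda_0$.
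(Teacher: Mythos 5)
Your proof is correct and follows essentially the same route as the paper: induct on the length of $T$ and reduce everything to the effect of a single length-$1$ component, which the paper dismisses as ``rather clear'' and you work out explicitly via the identity $\av(\MM)=Q_0$ (with the regularization coordinate keeping the centre points and the translated $Q$-fibres parametrized by the locus $C_0$, so that the contraction is exactly a blowup of $U_S$ with locus $C_0$). The only cosmetic differences are that you replace the paper's appeal to Lemma~\ref{blowup:leng:ma} by the direct observation that a blowup of (something isomorphic to) a blowup is again a blowup, and that your word ``elementary'' should be dropped since what you construct is a blowup via the identity on $C_0$, not an elementary blowup of all of $U_S$ — neither point affects the argument.
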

\begin{proof}
By induction on the length $l$ of $T$ and Lemma~\ref{blowup:leng:ma}, this is immediately reduced to the case $l = 1$, which is rather clear.
\end{proof}

\begin{defn}\label{rela:unary}
Let $A \sub A' \sub \VF^{n}$ and $B \sub B' \sub \VF^{n'}$ be definable sets. Suppose that, for some $k \leq \min\{n, n'\}$, $A \sub \VF^k \times \KD^{n-k}$ and $B \sub \VF^k \times \KD^{n'-k}$. For $i \leq k$ let $\tilde{i} = \{1, \ldots, k\} \mi \{i\}$.  We say that a definable bijection $G : A' \fun B'$ with $G(A) = B$ is \memph{relatively unary in the $(i, j)$-coordinate} if $G(x) = x$ for $x \in A' \mi A$ and $(\pr_{\tilde{j}} \circ G)(x) = \pr_{\tilde{i}}(x)$ for  $x \in A$. If, in addition, $G \rest A_a$ is also a special bijection for every $a \in \pr_{\tilde{i}} (A)$ then $G$ is \memph{relatively special} in the $(i, j)$-coordinate.
\end{defn}

Alternatively, for simplicity, we may take $i = j$ in this definition and stipulate that the condition in question holds up to coordinate permutation.

\begin{exam}
Every  special  bijection $T$ of length $1$ is clearly relatively special, but not vice versa (to begin with, the relevant data is not necessarily uniform across the fibers in each $\RV$-polydisc).
\end{exam}

\begin{lem}\label{unary:piece}
Let $A \sub \VF^k \times \KD^{m}$, $B \sub \VF^k \times \KD^{m'}$, and $f : A \fun B$ be a definable bijection. Then there is a definable finite partition $A_i$ of $A$ such that each $f \rest A_i$ is a composition of relatively unary bijections.
\end{lem}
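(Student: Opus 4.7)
I would prove this by induction on $k$, the number of $\VF$-coordinates. The case $k=0$ is vacuous: $A,B$ lie in pure $\KD$-sorts, and a bijection between them is the empty composition of relatively unary maps (nothing acts on the $\VF$-side).

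For the inductive step, writing $f(x_1,\ldots,x_k,u) = (f_1(x,u),\ldots,f_k(x,u),v(x,u))$, the plan is to split off one $\VF$-coordinate. More precisely, after a suitable finite definable partition of $A$, I would construct on each piece a relatively unary bijection $g: A \fun A^\sharp$ in some pair $(i,j)$ so that $f \circ g^{-1} : A^\sharp \fun B$ acts as the identity on the $j$-th $\VF$-coordinate of its source. Treating that coordinate as a parameter and invoking Proposition~\ref{Kstab} to absorb it into the $\KD$-sort fiberwise (plus compactness), the resulting map is a family of bijections on sets with only $k-1$ active $\VF$-coordinates, to which the inductive hypothesis applies, yielding the remaining decomposition by relatively unary bijections in the other $\VF$-coordinates.

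The construction of $g$ is where the heavy machinery comes in. First, by Corollary~\ref{fun:yerm} I may write $f$ piecewise by $\lan{T}{K}{}$-terms. Next, Lemma~\ref{fiber:TG} supplies a refinement to a $K$-partition on which each coordinate function $f_j$ is described, on each fiber, by an $\lan{T\Gamma}{}{}$-formula in the $\VF$-variables with parameters in $\KD$. By $\VF$-dimension counting (Remark~\ref{dim:well}), on each such fiber some coordinate $x_i$ of the source is definably recovered from $f_1(x,u),\ldots,f_k(x,u)$, the remaining $x_{i'}$ ($i' \neq i$), and the $\KD$-data; I would pick such an $(i,j)$ uniformly on each piece. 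Theorem~\ref{spec:bi:term:cons:disc} and Lemma~\ref{simul:special:dim:1} are then applied to refine the partition further so that the assignment
\[
(x,u) \efun \bigl(x_{\tilde i}, f_j(x,u), w(x,u)\bigr),
\]
with $w$ a $\KD$-valued definable function recording the partition index together with $\lg(x_i)$ when needed to distinguish elements in the same $\RV$-polydisc, is a bijection. This assignment is relatively unary in the $(i,j)$-coordinate by construction, so it is the sought $g$.

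The main obstacle is the bookkeeping in producing $g$: ensuring that a single coordinate pair $(i,j)$ and a single definable partition can simultaneously render $(x,u)\mapsto(x_{\tilde i},f_j(x,u),w(x,u))$ both well-defined and injective on every piece. This is the reason for threading the $K$-partitions of Lemma~\ref{fiber:TG} together with the $\rv$-contractibility provided by Theorem~\ref{spec:bi:term:cons:disc} and the disc-to-disc analysis of Lemma~\ref{simul:special:dim:1}: together they convert a generic definable bijection into one that, piecewise, separates cleanly in a single $\VF$-coordinate, so that the inductive hypothesis can close the argument.
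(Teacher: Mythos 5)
Your overall strategy---induction on $k$, peeling off one $\VF$-coordinate at a time via a relatively unary bijection---is the classical way such decomposition lemmas are proved, but as written it has a genuine gap exactly where the work lies. The step you justify by ``$\VF$-dimension counting'' only yields the trivial fact that $x_i$ is recovered from the whole tuple $f_1(x,u),\ldots,f_k(x,u)$ together with the $\KD$-data (that is nothing more than injectivity of $f$); what your construction of $g$ actually needs is that, piecewise, $x_i$ is recovered from a \emph{single} coordinate function $f_j(x,u)$ together with $x_{\tilde i}$ and $\KD$-valued data, so that $(x,u)\efun(x_{\tilde i},f_j(x,u),w(x,u))$ is injective. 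That is an exchange-type statement (and one must also treat the degenerate case where $x_i$ is already algebraic over $x_{\tilde i}$ and the $\KD$-data), and none of the results you cite delivers it: Theorem~\ref{spec:bi:term:cons:disc} and Lemma~\ref{simul:special:dim:1} are about making functions $\rv$-contractible, not about injectivity of a one-coordinate replacement, and Proposition~\ref{Kstab} cannot be used to ``absorb a $\VF$-coordinate into the $\KD$-sort''---a preserved $\VF$-coordinate is handled by working over the substructure it generates and using compactness, not by stable embeddedness of $K$. Moreover, carrying out the exchange/dimension argument directly in the full language (with $\ac$, $\pi$, $\exp$, $\log$) is precisely what the paper avoids: dimension here is defined through $K$-partitions, and no exchange calculus for the expanded structure is developed.

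The intended proof is much shorter and reduces to prior work instead of redoing it: take a $K$-partition $p$ of the graph of $f$ (Lemma~\ref{fiber:TG}); the map $a\efun(a,p(a,f(a)))$ is itself relatively unary in all relevant coordinates, and over each value of $p$ the fiber of $f$ is an $\lan{T}{RV}{}$-definable bijection between subsets of $\VF^k\times\RV^{m}$ and $\VF^k\times\RV^{m'}$, to which the decomposition into relatively unary bijections of \cite[Lemma~5.24]{Yin:tcon:I} applies; compactness then yields the finite definable partition. If you wish to keep your inductive scheme, you must either prove the single-coordinate recovery statement honestly (via exchange inside the $\lan{T\Gamma}{}{}$-definable fibers of a $K$-partition) or, more economically, quote the $\TCVF$ version of the lemma fiberwise as the paper does.
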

\begin{proof}
Let $p$ be a $K$-partition of (the graph of) $f$. Then we may view each $p^{-1}(a)$ as an $a$-$\lan{T}{RV}{}$-definable bijection between a subset of $\VF^k \times \RV^{m}$ and a subset of $\VF^k \times \RV^{m'}$. Since the bijection on $A$ given by $a \efun (a, p(a, f(a)))$ is relatively unary in all the relevant coordinates, we see that the claim  follows from \cite[Lemma~5.24]{Yin:tcon:I} and compactness.
\end{proof}

Observe that if a  bijection  is piecewise (over a definable finite partition) a composition of relatively unary bijections then it is indeed a composition of relatively unary bijections.

Let $A \sub \VF^{n}$ be a definable set and $T_i : A \fun A^\sharp$ a definable bijection that is relatively special in the $(i, i)$-coordinate (up to coordinate permutation). Suppose that the ``$\KD$-coordinates'' of $A$ are passed on to $A^\sharp$ via $T_i$ in the obvious way (the simplest example is that $T_i$ is the regularization of $A$). Moreover, for all $a \in A_{\tilde i}$ (here $\tilde{i} = \{1, \ldots, n\} \mi \{i\}$), the image $T_i(A_{a}) = A_{a}^\sharp$ of the fiber $A_{a} \sub A$ over $a$ is an $\RV$-pullback over $a$.  Let
\[
 A_{-i} = \bigcup_{a \in A_{\tilde i}} a \times \lg(A_{a}^\sharp) \sub \VF^{n-1} \times \KD^{m_i},
\]
which  we think of as obtained from $A^\sharp$ by ``$\lg$-contracting'' the coordinate in question. Note that, under Convention~\ref{conv:can}, $A^\sharp$ can be fully recovered from  $A_{-i}$. Let $\hat T_i : A \fun A_{-i}$ be the function induced by $T_i$.

For any suitable $j \leq n-1$, we repeat the above setup on $A_{-i}$ with respect to the $j$-coordinate, and thereby obtain a definable set $A_{-j} \sub \VF^{n-2} \times \RV^{m_j}$ and a function $\hat T_{j} : A_{-i} \fun A_{-j}$ that is induced by a relatively special bijection $T_j : A^\sharp \fun A^{\sharp\sharp}$ (the coordinate in $A^\sharp$ that gets ``$\lg$-contracted'' in $A_{-i}$ is simply carried along by $T_j$). Continuing thus,  a sequence of such bijections  $T_{\sigma(1)}, \ldots, T_{\sigma(n)}$ and a corresponding function $\hat T_{\sigma} : A \fun \KD^{l}$ result, where $\sigma$ is the permutation of $\{1, \ldots, n\}$ in question. The composition $T_{\sigma(n)} \circ \ldots \circ T_{\sigma(1)}$, which is referred to as the \memph{lift} of $\hat T_{\sigma}$, is denoted by $T_{\sigma}$. Note that $T^{-1}_{\sigma}$ is $\rv$-contractible. Also, since $T_{\sigma}(A)$ is an $\RV$-pullback, the set $\hat T_{\sigma}(A)$ may be regarded as an object $\coprod_{i \leq n} \hat T_{\sigma}(A)_i \in \Lambda[\leq \! n]$ such that $(b, a) \in \hat T_{\sigma}(A)_i$ if and only if there is an $\RV$-polydisc $\gp \times a \sub T_{\sigma}(A)$ with $\lg(\gp) = b$.

\begin{defn}\label{defn:standard:contraction}
The definable function $\hat T_{\sigma}$, or the object $\hat T_{\sigma}(A) \in \Lambda[\leq \! n]$, is called a \memph{standard contraction} of  $A$.
\end{defn}

By Corollary~\ref{VF:image:RV:inj}, there are abundant standard contractions in stock for any definable set.

\begin{lem}\label{kernel:dim:1:coa}
Suppose that $[A] = [B]$ in $\gsk \VF_1$ and $U, V \in \Lambda[\leq \! 1]$ are two standard contractions of them. Then $(U, V) \in \isp$.
\end{lem}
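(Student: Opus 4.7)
The plan is to use Lemma~\ref{simul:special:dim:1} as the main tool, combined with Lemma~\ref{special:to:blowup:coa}, to first build a ``common bridge'' between $A$ and $B$ and then compare each given standard contraction to its end of the bridge. First I would fix a definable bijection $f : A \fun B$ witnessing $[A] = [B]$ in $\gsk \VF_1$, and apply Lemma~\ref{simul:special:dim:1} to obtain special bijections $S_A : A \fun A^\sharp$, $S_B : B \fun B^\sharp$ (with $A^\sharp, B^\sharp$ being $\RV$-pullbacks of dimension $\leq 1$) together with a bijection $f^{\sharp}_{\downarrow} : \lg(A^\sharp) \fun \lg(B^\sharp)$ in $\Lambda[\leq \! 1]$. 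This already yields $(\lg(A^\sharp), \lg(B^\sharp)) \in \isp$ via identity blowups.

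Next I would show $(U, \lg(A^\sharp)) \in \isp$; the symmetric argument gives $(V, \lg(B^\sharp)) \in \isp$, and then transitivity of $\isp[1]$ (see Remark~\ref{semi:concr}) would conclude $(U, V) \in \isp$. For this bridging step, let $T_A$ be the lift of the given standard contraction $\hat T_A : A \fun U$, so $T_A(A)$ is an $\RV$-pullback with $\lg(T_A(A)) = U$. The composition $S_A \circ T_A^{-1} : T_A(A) \fun A^\sharp$ is then a definable bijection between two $\RV$-pullbacks of dimension $\leq 1$. Applying Lemma~\ref{simul:special:dim:1} once more to this composition yields further special bijections $R_1$ on $T_A(A)$ and $R_2$ on $A^\sharp$, together with a bijection $\lg(R_1(T_A(A))) \fun \lg(R_2(A^\sharp))$ in $\Lambda[\leq \! 1]$. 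Invoking Lemma~\ref{special:to:blowup:coa}, $\lg(R_1(T_A(A)))$ is isomorphic to a blowup of $U$ and $\lg(R_2(A^\sharp))$ is isomorphic to a blowup of $\lg(A^\sharp)$, and so the bijection produced above exhibits isomorphic blowups of $U$ and $\lg(A^\sharp)$, i.e., $(U, \lg(A^\sharp)) \in \isp$.

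The main technical obstacle lies in applying Lemma~\ref{special:to:blowup:coa} at this step, since that lemma is stated for special bijections on $\bb L U$ itself rather than on an arbitrary $\RV$-pullback with $\lg$-image $U$. To handle this I would identify $T_A(A)$ (and similarly $A^\sharp$) canonically with $\bb L U$ (resp.\ $\bb L \lg(A^\sharp)$) via the standard normalization $a \efun (a/\lg(a), \lg(a))$ on $\RV$-polydiscs, under which $R_1$ transfers to a special bijection on $\bb L U$ whose $\lg$-image is unchanged. Once this bookkeeping is in place the invocation of Lemma~\ref{special:to:blowup:coa} is direct, and the rest of the proof is a clean chain of Lemmas~\ref{simul:special:dim:1} and~\ref{special:to:blowup:coa} together with the transitivity of $\isp$.
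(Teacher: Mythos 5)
Your proposal is correct and rests on the same two key lemmas as the paper's proof (Lemmas~\ref{simul:special:dim:1} and~\ref{special:to:blowup:coa}); the only difference is organizational. The paper short-circuits your three-step chain and the transitivity of $\isp$: since $\bb L U \cong T_\sigma(A) \cong A$ and $\bb L V \cong B$ (via the lifts of the standard contractions and the normalization $a \efun (a/\lg(a), \lg(a))$ you describe), one has $[\bb L U] = [\bb L V]$, so a single application of Lemma~\ref{simul:special:dim:1} to a bijection $\bb L U \fun \bb L V$ followed by Lemma~\ref{special:to:blowup:coa} finishes the argument --- and this also dissolves the bookkeeping issue you raise, because the special bijections produced are then already defined on $\bb L U$ and $\bb L V$ themselves.
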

\begin{proof}
By Lemma~\ref{simul:special:dim:1}, there are special bijections $T$, $R$ on $\bb L U$, $\bb L V$ such that $U_{T}$, $V_{R}$ are isomorphic. So the assertion follows from Lemma~\ref{special:to:blowup:coa}.
\end{proof}

\begin{lem}\label{blowup:same:RV:coa}
Let $U^{\sharp}$ be a blowup of $U \in \Lambda[\leq \! n]$ of length $l$. Then $\bb L U^{\sharp}$ and $\bb L U$ are isomorphic.
\end{lem}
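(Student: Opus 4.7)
The plan is to induct on the length $l$ of $U^{\sharp}$. For $l = 0$ the statement is tautological. For the inductive step I would write $U^{\sharp}$ as a single (length $1$) blowup applied to a length-$(l-1)$ blowup $U^{l-1}$ of $U$; by the inductive hypothesis $\bb L U^{l-1} \cong \bb L U$, so it suffices to treat the case $l = 1$.

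So suppose $U^{\sharp} = V^{\sharp}_F$ with $F : W \fun C \sub V$ a $\Lambda[n]$-morphism ($n > 0$) and $V^{\sharp}_F = (W \amalg W \times Q) \amalg (V \mi C)$. Unpacking Definition~\ref{def:L},
\[
\bb L(V^{\sharp}_F) \,=\, (1 + \MM)^{n-1} \times W \,\amalg\, (1 + \MM)^n \times W \times Q \,\amalg\, (1 + \MM)^n \times (V \mi C),
\]
while $\bb L V = (1 + \MM)^n \times C \,\amalg\, (1 + \MM)^n \times (V \mi C)$. The morphism $F$ lifts to $\id \times F : (1+\MM)^n \times W \fun (1+\MM)^n \times C$, and the summand over $V \mi C$ matches on both sides. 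So the whole assertion reduces to a definable bijection
\[
(1 + \MM)^{n-1} \times W \,\amalg\, (1 + \MM)^n \times W \times Q \;\cong\; (1 + \MM)^n \times W,
\]
which, upon factoring out the common prefactor $(1+\MM)^{n-1} \times W$, reduces further to the single identity
\[
\{*\} \,\amalg\, (1 + \MM) \times Q \;\cong\; 1 + \MM.
\]

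To produce this bijection I would send $* \mapsto 1$ and use the section $\sn : \Lambda \fun \KD$ (the inverse of $\av \rest \KD$, from Notation~\ref{bi:cor}) to define
\[
\phi : (1 + \MM) \times Q \fun (1 + \MM) \mi \{1\}, \qquad \phi(b, (c, v)) \,=\, 1 + b \cdot \sn(c, v).
\]
Since $\av \circ \sn = \id$ on $\Lambda$, and $\ac(b) = \vv(b) = 1$ for $b \in 1 + \MM$, the product $b \cdot \sn(c, v)$ satisfies $\av(b \cdot \sn(c, v)) = (c, v)$. Because $|v| < 1$ and $v \neq 0$ whenever $(c, v) \in Q$, the product lies in $\MM \mi \{0\}$, so $\phi(b, (c, v)) \in (1 + \MM) \mi \{1\}$. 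The inverse sends $a$ to $\bigl((a - 1)/\lg(a - 1),\, \av(a - 1)\bigr)$; the first coordinate lies in $\ker(\av) = 1 + \MM$ because $\av(\lg(x)) = \av(x)$ for all $x \in \VF$.

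I do not foresee a serious obstacle. The argument is bookkeeping on top of structures already in hand: the lifting formula of Definition~\ref{def:L}, the retraction $\lg$, the section $\sn$, and the identity $\ker(\av) = 1 + \MM$. The only mildly delicate point is the distributive step reducing the general identity to $\{*\} \amalg (1+\MM) \times Q \cong 1 + \MM$, which has to be stated carefully because the two summands of $\bb L(V^{\sharp}_F)$ sit in $\VF$-ambient spaces of differing dimensions.
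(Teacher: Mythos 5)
Your proof is correct and follows the same route as the paper: the paper's own argument is simply "by induction reduce to length $1$; then, in the presence of $\lg$, a bijection between $\bb L U^{\sharp}$ and $\bb L U$ may be easily constructed," and your map $(b,(c,v)) \efun 1 + b\cdot\sn(c,v)$, $* \efun 1$, together with the lift $\id \times F$ of the locus morphism, is exactly the bijection the paper leaves implicit (note that $Q$ is defined with matching signs precisely so that $\sn(Q) = \KD \cap (\MM \mi \{0\})$, which is what makes your formula land in $(1+\MM)\mi\{1\}$ and invert via $a \efun \bigl((a-1)/\lg(a-1), \av(a-1)\bigr)$). No gap; the remaining points you flag (coordinate permutation and the differing ambient dimensions of the two summands) are indeed only bookkeeping.
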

\begin{proof}
By induction this is immediately reduced to the case $l=1$. Then, in the presence of $\lg$, a bijection between $\bb L U^{\sharp}$ and $\bb L U$ may be easily constructed.
\end{proof}

\begin{lem}\label{isp:VF:fiberwise:contract:isp:coa}
Let $A', A'' \sub \VF^n \times \KD^m$ be definable sets with $\pr_{\leq n} (A') = \pr_{\leq n} (A'') = A$ and $\dim(A) = l$. Suppose that, for some $k$ and every $a \in A$, $(A'_a, A''_a) \in \isp[k]$. Let $\hat T_{\sigma}$, $\hat R_{\sigma}$ be standard contractions of $A'$, $A''$. Then
\[
(\hat T_{\sigma}(A'), \hat R_{\sigma}(A'')) \in \isp[k{+}l].
\]
\end{lem}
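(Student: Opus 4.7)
The plan is to use the characterization of $\isp$ by equality of Euler characteristics (Remark~\ref{semi:concr}) combined with an o-minimal Fubini-type argument for $\chi$ applied to the projection $A' \fun A$. First, one checks that both $\hat T_{\sigma}(A')$ and $\hat R_{\sigma}(A'')$ lie in $\Lambda[\leq k+l]$: the $\VF$-dimension of $A'$ contributes $\Lambda$-gradings bounded by $l = \dim(A)$, while the fibers $A'_a \in \Lambda[\leq k]$ contribute gradings bounded by $k$. Under the standard-contraction construction these combine additively, giving a total grading of at most $k+l$. The remaining task is then to establish $\chi([\hat T_{\sigma}(A')]) = \chi([\hat R_{\sigma}(A'')])$.

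I argue this by induction on $n$, the number of $\VF$-coordinates in the ambient space. The base case $n = 0$ is immediate: the standard contractions are trivial, and the fiberwise hypothesis at the unique point of $A$ matches the conclusion. For the inductive step, I peel off the first $\VF$-coordinate of both $A'$ and $A''$ using the initial components $T_{\sigma(1)}$ and $R_{\sigma(1)}$ of the respective standard contractions. This produces reduced sets $\tilde A', \tilde A'' \sub \VF^{n-1} \times \KD^{m'}$ sharing a common base $\tilde A \sub \VF^{n-1}$, with the new fibers $\tilde A'_b$, $\tilde A''_b$ over $\tilde A$ incorporating both the original fiber data and the $\lg$-contracted first coordinate. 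The key step is to verify that these new fibers remain $\isp$-related with grading at most $k+1$, which follows because a relatively special bijection on a single coordinate corresponds fiberwise to a blowup of the associated $\Lambda$-object, and blowups preserve $\isp$-class by Lemma~\ref{blowup:equi:class:coa} and Lemma~\ref{special:to:blowup:coa}. The inductive hypothesis then yields $(\hat T_{\sigma}(A'), \hat R_{\sigma}(A'')) \in \isp[k+l]$.

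The hardest part is that two standard contractions $\hat T_{\sigma}$ and $\hat R_{\sigma}$ may peel coordinates in different orders or use different relatively special bijections, so the reduction steps are \emph{a priori} incompatible. To handle this, I would first apply Lemma~\ref{special:to:blowup:coa} together with Lemma~\ref{blowup:leng:ma} to argue that any two standard contractions of a single set are $\isp$-equivalent via blowups, which permits aligning $\hat T_{\sigma}$ and $\hat R_{\sigma}$ into a common coordinate order and compatible choices of bijections, modulo $\isp$. A fibered version of Lemma~\ref{simul:special:dim:1} is likely needed to ensure that the aligned initial bijections can be chosen to produce compatible $\KD$-decorations on the new fibers, so that the fiberwise $\isp$-hypothesis for $\tilde A', \tilde A''$ transfers uniformly across $\tilde A$. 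Once this alignment is arranged, the induction closes cleanly.
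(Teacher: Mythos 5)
There is a genuine gap, and it sits exactly where the real work of this lemma lies. The heart of the statement is a uniformity problem in a single $\VF$-coordinate: the hypothesis $(A'_a, A''_a) \in \isp[k]$ is construed over $\bb S \la a \ra$, so the blowups witnessing it vary with $a$, and to integrate the relation over one $\VF$-variable you must first make the fibres (and the witnessing data) constant on the $\RV$-polydiscs of a suitably special-bijected copy of the base. The paper's proof does precisely this: after the routine reduction by induction to $n = 1$, it applies Theorem~\ref{spec:bi:term:cons:disc} --- the $\KD$-parametrized contractibility theorem, whose full strength (beyond Lemma~\ref{spec:bi:any}) is needed exactly here --- to the function $a \efun (\sig([A'_a]), \sig([A''_a]))$ and to the top terms of formulas defining $A'$, $A''$, obtaining a special bijection $F$ on $A$ such that $A'_{a_1} = A'_{a_2}$ and $A''_{a_1} = A''_{a_2}$ whenever $a_1, a_2$ lie in the same polydisc of $A^\sharp$; it then compares the resulting sets $B'$, $B''$ with $\hat T_{\sigma}(A')$, $\hat R_{\sigma}(A'')$ fibrewise via Lemma~\ref{kernel:dim:1:coa} and finishes by transitivity and congruence of $\isp$. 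Your sketch acknowledges this issue only in passing (``a fibered version of Lemma~\ref{simul:special:dim:1} is likely needed'') but supplies no argument, and never invokes Theorem~\ref{spec:bi:term:cons:disc}, which exists for this purpose.

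Moreover, the alignment device you propose is circular at this stage: the claim that any two standard contractions of a single set are $\isp$-equivalent is, for $n > 1$, a downstream consequence of the present lemma (via Corollary~\ref{contraction:same:perm:isp:coa} and ultimately Proposition~\ref{kernel:L:dag:coa}); Lemma~\ref{special:to:blowup:coa} only compares a special bijection on a lift $\bb L U$ with a blowup of $U$, and adding Lemma~\ref{blowup:leng:ma} does not yield the statement for arbitrary $A \sub \VF^n$. The worry about the two contractions peeling coordinates in different orders is also moot here, since both are indexed by the same $\sigma$ (different orders are only treated later, in Lemma~\ref{contraction:perm:pair:isp:coa}). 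Finally, the $\chi$-criterion of Remark~\ref{semi:concr} applies to pairs that are $\in^*$ the same $\Lambda[\leq \! n]$, so a bare Fubini computation of Euler characteristics would additionally have to match top gradings and, more seriously, presupposes that the Euler characteristic of a contraction is independent of the choices made --- which is part of what this lemma is needed to establish.
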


Note that the condition $(A'_a, A''_a) \in \isp[k]$ is construed over the substructure $\bb S \la a \ra$.

\begin{proof}
By induction on $n$ this is immediately reduced to the case $n=1$. Let $f$ be the definable function on $A$ given by $a \efun (\sig([A'_a]), \sig([A''_a]))$ (in fact, by compactness, the range of $f$ is finite). Let $\phi'$, $\phi''$ be quantifier-free formulas that define $A'$, $A''$.  Applying Theorem~\ref{spec:bi:term:cons:disc} to $f$ and the top terms of $\phi'$, $\phi''$ (for the first clause below Lemma~\ref{spec:bi:any} suffices, but for the second clause the stronger Theorem~\ref{spec:bi:term:cons:disc} is needed), we obtain a special bijection $F: A \fun A^{\sharp}$ such that $A^{\sharp}$ is an $\RV$-pullback and, for all $\RV$-polydisc $\gp \sub A^{\sharp}$,
\begin{itemize}
 \item $(f \circ F^{-1})(\gp)$ is a singleton,
 \item for all $a_1, a_2 \in F^{-1}(\gp)$, $A'_{a_1} = A'_{a_2}$ and $A''_{a_1} = A''_{a_2}$.
\end{itemize}
Let $B' = \bigcup_{a \in A} F(a) \times A'_a$, similarly for $B''$. Note that $B'$, $B''$ are obtained via special bijections on $A'$, $A''$. It follows that, for all $e \in \pr_{> 1} (A')$, $B'_e$ is an $\RV$-pullback that is $e$-definably bijective to the $\RV$-pullback $T_{\sigma}(A')_e$. By Lemma~\ref{kernel:dim:1:coa}, we have
\[
(\lg(B'_e), \hat T_{\sigma}(A')_e) \in \isp[1]
\]
and hence, by compactness, $(\lg(B'), \hat T_{\sigma}(A')) \in \isp[k{+}1]$. Of course the same holds for $\lg(B'')$ and $\hat R_{\sigma}(A'')$. On the other hand, for all $\RV$-polydisc $\gp \sub A^{\sharp}$, the assumption gives
\[
(\lg(B')_{\lg(\gp)}, \lg(B'')_{\lg(\gp)}) \in \isp[k]
\]
and hence $(\lg(B'), \lg(B'')) \in \isp[k{+}1]$. Since $\isp$ is a congruence relation, the lemma follows.
\end{proof}

\begin{cor}\label{contraction:same:perm:isp:coa}
Let $A', A'' \sub \VF^n$ and suppose that there is a definable bijection between them that is relatively unary in the $(i, i)$-coordinate for some $i \leq n$. Then we have, for any permutation $\sigma$ of $\{1, \ldots, n\}$ with $\sigma(1) = i$ and any standard contractions $\hat T_{\sigma}$, $\hat R_{\sigma}$ of $A'$, $A''$,
\[
(\hat T_{\sigma}(A'), \hat R_{\sigma}(A'')) \in \isp.
\]
\end{cor}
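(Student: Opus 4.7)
The plan is to split the standard contractions $\hat T_\sigma$ and $\hat R_\sigma$ into two stages: the first component, which processes the distinguished $i$\nobreakdash-th coordinate (since $\sigma(1)=i$), and the remaining $n-1$ components, which can be folded into an application of Lemma~\ref{isp:VF:fiberwise:contract:isp:coa}. The relatively unary bijection supplies exactly what is needed to compare the first stages fiberwise via Lemma~\ref{kernel:dim:1:coa}.

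First, let $G : A' \fun A''$ be the given relatively unary bijection in the $(i,i)$-coordinate. By Definition~\ref{rela:unary}, $G$ preserves the $\tilde{i}$-coordinates, so $\pr_{\tilde{i}}(A') = \pr_{\tilde{i}}(A'') =: A$, and for every $a \in A$ the restriction $G_a : (A')_a \fun (A'')_a$ is a definable bijection between one-dimensional subsets of $\VF$, witnessing $[(A')_a]=[(A'')_a]$ in $\gsk \VF_1$. Next, because $\sigma(1)=i$, the initial components of $\hat T_\sigma$ and $\hat R_\sigma$ are relatively special bijections $T'_i$ and $R'_i$ in the $(i,i)$-coordinate; their $\lg$-contractions produce sets $A'_{-i}, A''_{-i} \sub \VF^{n-1} \times \KD^{m}$ (after uniformizing the $\KD$-arity by compactness and padding). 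By construction, for each $a \in A$ the sets $(A'_{-i})_a$ and $(A''_{-i})_a$ are standard contractions of $(A')_a$ and $(A'')_a$, so Lemma~\ref{kernel:dim:1:coa} gives $((A'_{-i})_a,(A''_{-i})_a) \in \isp[1]$. Finally, the remaining components of $\hat T_\sigma$ and $\hat R_\sigma$, indexed by $\sigma(2),\ldots,\sigma(n)$, are standard contractions $\hat T_{\sigma'}, \hat R_{\sigma'}$ of $A'_{-i}, A''_{-i}$ on $\VF^{n-1}$; applying Lemma~\ref{isp:VF:fiberwise:contract:isp:coa} with $k=1$ and $l = \dim(A)$ yields $(\hat T_\sigma(A'),\hat R_\sigma(A'')) = (\hat T_{\sigma'}(A'_{-i}), \hat R_{\sigma'}(A''_{-i})) \in \isp[1+l] \sub \isp$.

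The main obstacle I expect is the bookkeeping around the first stage: one needs to check that the $\lg$-contraction of the restriction $T'_i \rest (A')_a$ coincides with $(A'_{-i})_a$, and in particular is itself a standard contraction of the one-dimensional set $(A')_a$ in the sense of Definition~\ref{defn:standard:contraction}, so that Lemma~\ref{kernel:dim:1:coa} actually applies; this follows from the clause in Definition~\ref{rela:unary} that a relatively special bijection restricts to a special bijection on every fiber, but it is the one place where the precise syntax of ``relatively special'' has to be matched against the fiberwise statement of Lemma~\ref{kernel:dim:1:coa}. A secondary nuisance is harmonizing the number of $\KD$-coordinates appearing in $A'_{-i}$ and $A''_{-i}$ before invoking Lemma~\ref{isp:VF:fiberwise:contract:isp:coa}, but this is handled routinely by compactness together with the freedom to pad with trivial $\KD$-factors, which does not alter isomorphism classes in $\Lambda[*]$.
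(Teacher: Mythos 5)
Your proposal is correct and is exactly the intended argument: the paper's proof consists of the single sentence ``This is immediate by Lemmas~\ref{kernel:dim:1:coa} and \ref{isp:VF:fiberwise:contract:isp:coa},'' and your two-stage decomposition (compare the first, $i$\nobreakdash-th--coordinate contractions fiberwise via Lemma~\ref{kernel:dim:1:coa}, then feed the resulting fiberwise $\isp[1]$-equivalence into Lemma~\ref{isp:VF:fiberwise:contract:isp:coa} for the remaining coordinates) is precisely how those two lemmas combine. The bookkeeping points you flag (fiberwise restrictions of relatively special bijections being standard contractions, and padding the $\KD$-arities) are real but routine, as you say.
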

\begin{proof}
This is immediate by Lemmas \ref{kernel:dim:1:coa} and \ref{isp:VF:fiberwise:contract:isp:coa}.
\end{proof}

\begin{lem}\label{subset:partitioned:2:unit:contracted}
Let $12$, $21$ denote the permutations of $\{1, 2\}$. Let $A \sub \VF^2 \times \KD^m$ be a definable set. Then there are a definable injection $f : A \fun \VF^2 \times \KD^l$  that  is relatively unary in both coordinates and standard contractions $\hat T_{12}$, $\hat R_{21}$ of $f(A)$ with $[\hat T_{12}(f(A))] = [\hat R_{21}(f(A))]$ in $\gsk \Lambda[\leq \! 2]$.
\end{lem}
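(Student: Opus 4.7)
The plan is to take $f$ to be a special bijection that makes $f(A)$ an $\RV$-pullback. Once this is achieved, the two standard contractions reduce to coordinate-wise $\lg$-maps and agree up to a permutation of the two newly produced $\KD$-coordinates, which is enough for equality of classes in $\gsk \Lambda[\leq 2]$.

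By Corollary~\ref{VF:image:RV:inj}, there is a special bijection $T : A \fun A^\sharp$ such that $A^\sharp \sub \VF^2 \times \KD^l$ is an $\RV$-pullback. Write $T = T_k \circ \cdots \circ T_1$ as a composition of length-$1$ components. Each $T_j$ targets a single $\VF$-coordinate, which in our ambient $\VF^2 \times \KD^m$ setting is coordinate $1$ or coordinate $2$, since the trailing coordinates already lie in $\KD$ and are not targeted. A length-$1$ special bijection targeting coordinate $i$ modifies only the $i$-th $\VF$-coordinate and adjoins new $\KD$-coordinates recording the $\lg$-regularization data; after absorbing these fresh $\KD$-coordinates into the existing $\KD^m$-factor via a coordinate permutation, each $T_j$ fits the framework of Definition~\ref{rela:unary} as a bijection relatively unary in the $(i,i)$-coordinate. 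Taking $f = T$ yields a definable injection $f : A \fun \VF^2 \times \KD^l$ which is a composition of relatively unary bijections in coordinates $1$ and $2$.

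It remains to compute both contractions of $f(A) = A^\sharp$. Since $A^\sharp$ is an $\RV$-pullback, every fiber of $f(A)$ in either coordinate is an $\RV$-pullback, so the relatively special fiberwise bijections used in constructing the standard contractions may be chosen to be the regularization maps (which are trivial in content). Concretely, $A^\sharp = \bigsqcup_\alpha \gp_\alpha \times \gq_\alpha \times c_\alpha$, where each $\gp_\alpha, \gq_\alpha$ is either a disc $\rv^{-1}(t)$ with $t \in \RV$ or a singleton in $\KD$, and $c_\alpha \in \KD^l$. Both $\hat T_{12}$ and $\hat R_{21}$ send the polydisc $\gp_\alpha \times \gq_\alpha \times c_\alpha$ to a point encoding $\lg(\gp_\alpha)$, $\lg(\gq_\alpha)$, and $c_\alpha$, and classify this point in $\Lambda[i_\alpha]$, where $i_\alpha$ counts how many of $\gp_\alpha, \gq_\alpha$ are genuine $\VF$-discs as opposed to $\KD$-singletons. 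Since $i_\alpha$ is symmetric in the two coordinates, the two contractions differ only by a permutation of the two $\lg$-coordinates in the ambient $\KD$-space, so they represent the same class in $\gsk \Lambda[\leq 2]$.

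The main obstacle is the bookkeeping in the second paragraph: confirming that each length-$1$ component $T_j$ of $T$ can genuinely be packaged as relatively unary in the $(i,i)$-coordinate up to coordinate permutation, once the fresh $\KD$-coordinates introduced by regularization have been absorbed correctly. This is essentially a formal check that Convention~\ref{conv:can} is compatible with Definition~\ref{rela:unary}, but it must be done carefully so that $f$ becomes well-defined as an honest composition of relatively unary bijections in coordinates $1$ and $2$.
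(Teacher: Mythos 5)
There is a genuine gap, and it is exactly the point you set aside as ``bookkeeping''. The lemma asks for a \emph{single} injection $f$ that is relatively unary in both coordinates, i.e.\ simultaneously relatively unary in the $(1,1)$- and the $(2,2)$-coordinate in the sense of Definition~\ref{rela:unary}: $f$ may add or alter $\KD$-data, but it must move \emph{neither} $\VF$-coordinate. A special bijection of positive length does not have this property: a length-$1$ component targeting coordinate $i$ translates that coordinate by its focus map, so it is relatively unary in the $(i,i)$-coordinate only, and your $f=T$ from Corollary~\ref{VF:image:RV:inj} is merely a \emph{composition} of maps relatively unary in coordinate $1$ or in coordinate $2$ --- a strictly weaker property (compare the phrasing of Lemma~\ref{unary:piece}, where the paper says ``composition'' when it means composition). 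The distinction is not cosmetic: the lemma is consumed in the proof of Lemma~\ref{contraction:perm:pair:isp:coa}, where Corollary~\ref{contraction:same:perm:isp:coa} is applied to the \emph{same} $f$ twice, once with $i=1$ (to compare contractions of $A$ and $f(A)$ beginning with the first coordinate) and once with $i=2$; with only a composition $f_2\circ f_1$, the chain of $\isp$-comparisons does not close, since $f_1$ controls contractions starting with coordinate $1$ and $f_2$ those starting with coordinate $2$, but never both for one map. So what you prove is a weaker statement than the one asserted, and one that does not suffice downstream.

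Your last two paragraphs are fine as far as they go: once a set is an $\RV$-pullback, the trivial (regularization) contractions in the two orders exist and differ by a coordinate swap, hence have equal classes in $\gsk\Lambda[\leq\!2]$. But the difficulty of the lemma is precisely to obtain the contraction-compatibility \emph{without} touching the $\VF$-coordinates, which cannot be done by passing to an $\RV$-pullback via special bijections --- if it could, the corresponding two-dimensional lemma in the $\TCVF$ setting (\cite[Lemma~5.26]{Yin:tcon:I}) would be trivial, and it is not. The paper's proof instead takes $f$ to be (built from) the graph map $a \efun (a, p(a))$ of a $K$-partition, which changes no $\VF$-coordinate, and then invokes \cite[Lemma~5.26]{Yin:tcon:I} fiberwise over the partition together with compactness; all the genuine two-dimensional content is imported from that cited lemma rather than re-derived.
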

\begin{proof}
Let $p$ be a $K$-partition of $A$. Since the bijection on $A$ given by $a \efun (a, p(a))$ is relatively unary in both coordinates,  the assertion simply follows from \cite[Lemma~5.26]{Yin:tcon:I} and compactness.
\end{proof}

\begin{lem}\label{contraction:perm:pair:isp:coa}
Let $A \sub \VF^n$ be a definable set. Let $i, j \in \{1, \ldots, n\}$ be distinct indices and $\sigma_1$, $\sigma_2$ two permutations of $\{1, \ldots, n\}$ such that
\[
\sigma_1(1) = \sigma_2(2) = i, \quad \sigma_1(2) = \sigma_2(1) = j, \quad \sigma_1
\rest \set{3, \ldots, n} = \sigma_2 \rest \set{3, \ldots, n}.
\]
Then, for any standard contractions $\hat T_{\sigma_1}$, $\hat T_{\sigma_2}$ of $A$,
\[
(\hat T_{\sigma_1}(A), \hat T_{\sigma_2}(A)) \in \isp.
\]
\end{lem}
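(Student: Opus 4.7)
The plan is to reduce to the case $n=2$ (with $i=1$, $j=2$) and then invoke Lemma~\ref{subset:partitioned:2:unit:contracted} together with Corollary~\ref{contraction:same:perm:isp:coa}. After a harmless coordinate permutation we may assume $i=1$, $j=2$, and write $\sigma_1 = 12\sigma$, $\sigma_2 = 21\sigma$ for a common permutation $\sigma$ of $\{3,\dots,n\}$. Correspondingly the two standard contractions factor as $\hat T_{\sigma_1} = \hat T_{\sigma} \circ \hat T_{12}$ and $\hat T_{\sigma_2} = \hat R_{\sigma} \circ \hat R_{21}$, where the inner contractions act on the first two $\VF$-coordinates fiberwise over the last $n-2$ coordinates.

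Assuming the two-variable statement, I will lift it to arbitrary $n$ as follows. For each $b \in \pr_{>2}(A)$ the two-variable case (applied over $\bb S\la b\ra$) gives $(\hat T_{12}(A_b), \hat R_{21}(A_b)) \in \isp$. Lemma~\ref{isp:VF:fiberwise:contract:isp:coa} then promotes this fiberwise $\isp$-equivalence to a global one after one further coordinate of $\sigma$ is contracted. Iterating this along $\sigma$ (an induction on the length of $\sigma$, using at each step that standard contractions are themselves built from relatively special bijections and that $\isp$ is a semiring congruence) produces $(\hat T_{\sigma_1}(A), \hat T_{\sigma_2}(A)) \in \isp$.

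For the base case $n=2$, apply Lemma~\ref{subset:partitioned:2:unit:contracted} to $A \subseteq \VF^2$. This yields a definable injection $f : A \fun \VF^2 \times \KD^l$ relatively unary in \emph{both} coordinates, together with distinguished standard contractions $\hat T_{12}^f$, $\hat R_{21}^f$ of $f(A)$ such that $[\hat T_{12}^f(f(A))] = [\hat R_{21}^f(f(A))]$ in $\gsk \Lambda[\leq 2]$; in particular this pair lies in $\isp$. Because $f$ is relatively unary in the $(1,1)$-coordinate, Corollary~\ref{contraction:same:perm:isp:coa} applied with the permutation $12$ (to arbitrary standard contractions of $A$ and $f(A)$) gives $(\hat T_{12}(A), \hat T_{12}^f(f(A))) \in \isp$. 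Dually, relative unarity of $f$ in the $(2,2)$-coordinate and Corollary~\ref{contraction:same:perm:isp:coa} applied with the permutation $21$ give $(\hat R_{21}(A), \hat R_{21}^f(f(A))) \in \isp$. Transitivity of $\isp$ now yields $(\hat T_{12}(A), \hat R_{21}(A)) \in \isp$, closing the two-variable case.

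The main obstacle I anticipate is not the two-variable step, which is handed to us by the preceding lemmas, but the bookkeeping in the inductive lift: Lemma~\ref{isp:VF:fiberwise:contract:isp:coa} is stated for a single pair of standard contractions over $\VF^n$, whereas here I am propagating an $\isp$-equivalence through the remaining $n-2$ coordinate contractions prescribed by $\sigma$. The key observation that makes this go through is that the subsequent contractions are the \emph{same} permutation on both sides, so at each inductive step the relevant bijection between the partially contracted sets is fiberwise the identity on the outer $\VF$-coordinates, which is in particular relatively unary; the invariance needed then reduces again to Lemma~\ref{isp:VF:fiberwise:contract:isp:coa}. Care must also be taken that dimensions of the fiber-$\isp$-classes match up so that the resulting global class lands in the correct $\isp[k]$, but this is automatic because all the contractions involved preserve the relevant dimensions by construction.
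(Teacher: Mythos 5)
Your proposal is correct and takes essentially the same route as the paper: the paper simply quotes the proof of \cite[Lemma~5.38]{Yin:tcon:I}, which is exactly your reduction to the case $n=2$ via fiberwise application over the remaining coordinates plus Lemma~\ref{isp:VF:fiberwise:contract:isp:coa}, with the two-variable case settled by Lemma~\ref{subset:partitioned:2:unit:contracted}, Corollary~\ref{contraction:same:perm:isp:coa} (applied once with $12$ and once with $21$), and transitivity of $\isp$. The only cosmetic remark is that no separate iteration along $\sigma$ is needed: a single application of Lemma~\ref{isp:VF:fiberwise:contract:isp:coa}, whose internal induction on $n$ already absorbs all the remaining coordinates at once, promotes the fiberwise equivalence $(\hat T_{12}(A_b), \hat R_{21}(A_b)) \in \isp$ to the global statement.
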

\begin{proof}
This is the same statement as \cite[Lemma~5.38]{Yin:tcon:I},  the ingredients whose proof formally depends on have all been reproduced above, namely Lemmas~\ref{isp:VF:fiberwise:contract:isp:coa}, \ref{subset:partitioned:2:unit:contracted} and Corollary~\ref{contraction:same:perm:isp:coa}. So the argument therein may be quoted here with virtually no changes.
\end{proof}

Indeed, we have reproduced all the results that the proof of \cite[Proposition~5.39]{Yin:tcon:I} formally depends on, namely Proposition~\ref{L:sur:c}, Lemmas~\ref{blowup:same:RV:coa}, \ref{kernel:dim:1:coa}, \ref{unary:piece}, \ref{special:to:blowup:coa}, \ref{contraction:perm:pair:isp:coa}, and Corollaries~\ref{contraction:same:perm:isp:coa}, \ref{VF:image:RV:inj}, so the following crucial description of the kernel of $\bb L$ may be established by the same argument.

\begin{prop}\label{kernel:L:dag:coa}
For $U, V \in \Lambda[\leq \! n]$,
\[
[\bb L U] = [\bb L  V] \quad \text{if and only if} \quad ([ U], [ V]) \in \isp.
\]
\end{prop}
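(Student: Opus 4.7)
The plan is to prove the two implications separately; the paper has noted that every ingredient formally required by the analogous argument \cite[Proposition~5.39]{Yin:tcon:I} has been reassembled, so the proof can be carried out along the same template.

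The forward direction is immediate: an $\isp$-equivalence produces blowups $U^{\sharp} \cong V^{\sharp}$, and Lemma~\ref{blowup:same:RV:coa} together with the functoriality of $\bb L$ on isomorphism classes give
\[
 [\bb L U] \;=\; [\bb L U^{\sharp}] \;=\; [\bb L V^{\sharp}] \;=\; [\bb L V].
\]

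For the reverse direction, fix a definable bijection $f : \bb L U \fun \bb L V$. By Lemma~\ref{unary:piece} (and the observation recorded just after it that a piecewise composition of relatively unary bijections is itself a composition of relatively unary bijections), we may write $f = g_m \circ \cdots \circ g_1$ with each $g_k : A_{k-1} \fun A_k$ relatively unary in some coordinate $i_k$, where $A_0 = \bb L U$ and $A_m = \bb L V$. For each $k$, choose standard contractions of $A_{k-1}$ and $A_k$ (available by Corollary~\ref{VF:image:RV:inj} and Lemma~\ref{special:to:blowup:coa}) using a permutation $\sigma_k$ with $\sigma_k(1) = i_k$; Corollary~\ref{contraction:same:perm:isp:coa} then gives $\isp$-equivalence between these two contractions. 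Passing from $\sigma_k$ to $\sigma_{k+1}$ as the chain advances is handled by iterated adjacent transpositions via Lemma~\ref{contraction:perm:pair:isp:coa}, applied to a fixed intermediate $A_k$. Assembling these $\isp$-equivalences along the chain, and using that $\isp$ is a congruence relation on $\gsk \Lambda[*]$ (Remark~\ref{semi:concr}), yields $(\hat T(\bb L U), \hat R(\bb L V)) \in \isp$ for some pair of standard contractions of the two endpoints.

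It remains to identify $[\hat T(\bb L W)] = [W]$ modulo $\isp$ for any $W \in \Lambda[\leq \! n]$ and any standard contraction, which would then close the loop and yield $([U],[V]) \in \isp$. But $\bb L W$ is already an $\RV$-pullback of a particularly uniform shape: every fiber is a product of copies of $1 + \MM$, so for the ``identity'' permutation one can take the lifts in the standard contraction to be the trivial regularizations in each coordinate, recovering $W$ up to isomorphism. For an arbitrary permutation, Lemma~\ref{contraction:perm:pair:isp:coa} reduces the assertion to this identity case. The main obstacle I anticipate is precisely the bookkeeping of the inductive composition: at each juncture the relevant coordinate $i_k$ may shift, and the standard contractions must be threaded together so that the $\isp$-equivalences accumulate coherently. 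This is where the congruence property of $\isp$ together with Lemma~\ref{contraction:perm:pair:isp:coa} do the essential work, and checking that the various reductions respect each other is what the bulk of the argument amounts to.
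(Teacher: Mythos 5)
Your proposal is correct and follows essentially the same route as the paper, which simply invokes the argument of \cite[Proposition~5.39]{Yin:tcon:I} and the ingredients you assemble (Lemmas~\ref{unary:piece}, \ref{blowup:same:RV:coa}, \ref{special:to:blowup:coa}, \ref{contraction:perm:pair:isp:coa}, Corollaries~\ref{VF:image:RV:inj}, \ref{contraction:same:perm:isp:coa}, and the congruence property of $\isp$). The only point to make explicit is that reducing an arbitrary standard contraction of $\bb L W$ to the trivial one (which recovers $W$) needs Corollary~\ref{contraction:same:perm:isp:coa} applied to the identity map (to compare different contraction data with the same permutation, fiberwise via Lemma~\ref{isp:VF:fiberwise:contract:isp:coa}) in addition to Lemma~\ref{contraction:perm:pair:isp:coa}, not the latter alone --- a gap in phrasing only, which your closing bookkeeping remark already anticipates.
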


In light of this description, Proposition~\ref{L:sur:c}, and Remark~\ref{semi:concr}, we conclude:

\begin{thm}\label{main:prop:k:vol:dag}
For each $n \geq 0$ there is a canonical isomorphism of Grothendieck semigroups
\[
\int_{+} : \gsk  \VF_n \fun \gsk  \Lambda[\leq \! n] /  \isp
\]
such that
\[
\int_{+} [A] = [U]/  \isp \quad \text{if and only if} \quad  [A] = [\bb L U].
\]
Putting these together, we obtain a canonical isomorphism of Grothendieck semirings
\[
\int_{+} : \gsk \VF_* \fun \gsk  \Lambda[*] /  \isp \cong \dO \llcorner \dO
\]
and, upon groupification, a canonical isomorphism of Grothendieck rings
\[
\int : \ggk \VF_* \fun \ggk  \Lambda[*] /  \isp \cong \Z.
\]
\end{thm}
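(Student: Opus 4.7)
The plan is to assemble the theorem from the three ingredients already in hand: surjectivity of the lifting map (Proposition~\ref{L:sur:c}), the precise description of its kernel (Proposition~\ref{kernel:L:dag:coa}), and the explicit computation of the quotient semiring in Remark~\ref{semi:concr}. Nothing beyond a formal first-isomorphism-theorem argument for semigroups/semirings should be required; the analytic content has been discharged in the preceding lemmas on special bijections and standard contractions.

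First I would work at the fixed level $n$. Proposition~\ref{L:sur:c} restricts to a surjective semigroup homomorphism $\bb L_{\leq n} : \gsk \Lambda[\leq \! n] \fun \gsk \VF_n$, since every isomorphism class in $\VF_n$ is represented by an $\RV$-pullback of $\VF$-dimension $\leq n$, and such a pullback is in the image of $\bb L_{\leq n}$ by the form of (\ref{normpull}) together with the fact that the map $\av \circ \lg$ identifies each $\RV$-polydisc with its $\KD$-coordinates. By Proposition~\ref{kernel:L:dag:coa}, for $U, V \in \Lambda[\leq \! n]$ we have $[\bb L U] = [\bb L V]$ iff $([U],[V]) \in \isp$. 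Since $\isp[n]$ is a semigroup congruence relation on $\gsk \Lambda[\leq \! n]$ (Remark~\ref{semi:concr}), the universal property of quotient semigroups yields a well-defined homomorphism
\[
\int_{+} : \gsk \VF_n \fun \gsk \Lambda[\leq \! n] / \isp,\qquad [\bb L U] \efun [U]/\isp,
\]
which is a two-sided inverse of the induced map $\gsk \Lambda[\leq \! n]/\isp \fun \gsk \VF_n$. Hence it is an isomorphism, and by construction $\int_{+}[A] = [U]/\isp$ iff $[A] = [\bb L U]$.

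Next I would verify compatibility as $n$ varies. The inclusion $\gsk \Lambda[\leq \! n] \hookrightarrow \gsk \Lambda[\leq \! n{+}1]$ is compatible with $\bb L$ and with $\isp$, so the maps $\int_{+}$ at different levels assemble into a single map $\int_{+} : \gsk \VF_* \fun \gsk \Lambda[*]/\isp$. To upgrade this to a semiring isomorphism, I would observe that $\bb L$ is multiplicative: $\bb L(U \times V) = \bb L U \times \bb L V$ up to a trivial reordering of coordinates, because the $(1 + \MM)$-factors in (\ref{normpull}) commute with the product on the $\KD$-sides. Surjectivity and the description of the kernel are already multiplicative (Proposition~\ref{kernel:L:dag:coa} and the semiring-congruence statement in Remark~\ref{semi:concr}), so $\int_{+}$ respects products. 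The explicit identification $\gsk \Lambda[*]/\isp \cong \dO \llcorner \dO$ is exactly what Remark~\ref{semi:concr} records.

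Finally, groupification is functorial, so applying it to the semiring isomorphism $\int_{+}$ yields a ring isomorphism $\int : \ggk \VF_* \fun \ggk \Lambda[*]/\isp$. In $\ggk \Lambda[*]/\isp$, the two copies of $\dO$ in $\dO \llcorner \dO$ each collapse to $\Z$ upon groupification (the $\N$-monoid part generates $\Z$), and because the copies are conjoined along $\N$ they coincide in the group quotient, yielding $\ggk \Lambda[*]/\isp \cong \Z$, again as recorded in Remark~\ref{semi:concr}. This furnishes all three assertions of the theorem. The main conceptual obstacle --- showing that $\isp$ is the \emph{entire} kernel of $\bb L$ --- has already been overcome in Proposition~\ref{kernel:L:dag:coa}, so what remains is purely the packaging just described.
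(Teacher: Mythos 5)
Your proposal is correct and follows essentially the same route as the paper, which derives the theorem directly from Proposition~\ref{L:sur:c} (surjectivity), Proposition~\ref{kernel:L:dag:coa} (kernel description), and Remark~\ref{semi:concr} (the congruence property and the identifications with $\dO \llcorner \dO$ and $\Z$). The paper leaves the formal packaging implicit; you have simply spelled it out.
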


\providecommand{\bysame}{\leavevmode\hbox to3em{\hrulefill}\thinspace}
\providecommand{\MR}{\relax\ifhmode\unskip\space\fi MR }
\providecommand{\MRhref}[2]{%
  \href{http://www.ams.org/mathscinet-getitem?mr=#1}{#2}
}
\providecommand{\href}[2]{#2}


\end{document}